\documentclass[a4paper,11pt]{article}

\addtolength{\hoffset}{-1cm}
\addtolength{\voffset}{-2cm}
\addtolength{\textwidth}{2cm}
\addtolength{\textheight}{4cm}

\usepackage[T1]{fontenc}
\usepackage{lmodern}

\usepackage{dsfont}

\usepackage[latin1]{inputenc}
\usepackage{amsmath}
\usepackage{amsthm}
\usepackage{amssymb}
\usepackage{mathrsfs}
\usepackage{graphicx}
\usepackage[all]{xy}
\usepackage{hyperref}

\usepackage{stmaryrd}
\usepackage{caption}

\usepackage{abstract} 

\usepackage{MnSymbol}

\newtheorem{thm}{Theorem}[section]
\newtheorem{cor}[thm]{Corollary}
\newtheorem{claim}[thm]{Claim}
\newtheorem{fact}[thm]{Fact}

\newtheorem{lemma}[thm]{Lemma}
\newtheorem{prop}[thm]{Proposition}

\theoremstyle{definition}
\newtheorem{definition}[thm]{Definition}

\newtheorem{remark}[thm]{Remark}
\newtheorem{question}[thm]{Question}

\def\rquotient#1#2{%
	\makeatletter
	\raise.3ex\hbox{$#1$}/\lower.3ex\hbox{$#2$}%
	\makeatother
}	

\makeatletter
\newcommand{\subjclass}[2][2010]{%
	\let\@oldtitle\@title%
	\gdef\@title{\@oldtitle\footnotetext{#1 \emph{Mathematics subject classification.} #2}}%
}
\newcommand{\keywords}[1]{%
	\let\@@oldtitle\@title%
	\gdef\@title{\@@oldtitle\footnotetext{\emph{Key words and phrases.} #1.}}%
}
\makeatother

\newcommand{\Address}{{
		\bigskip
		\small
		
		\textsc{D\'epartement de Math\'ematiques B\^atiment 307, Facult\'e des Sciences d'Orsay, Universit\'e Paris-Sud, F-91405 Orsay Cedex, France.}\par\nopagebreak
		\textit{E-mail address}: \texttt{anthony.genevois@math.u-psud.fr}
		
}}

\title{Negative curvature in automorphism groups of one-ended hyperbolic groups}
\date{\today}
\author{Anthony Genevois}

\subjclass{Primary 20F65. Secondary 20F67.}
\keywords{hyperbolic groups, automorphism groups, acylindrically hyperbolic groups, JSJ decompositions}

\begin{document}

\maketitle

\begin{abstract}
In this article, we show that some negative curvature may survive when taking the automorphism group of a finitely generated group. More precisely, we prove that the automorphism group $\mathrm{Aut}(G)$ of a one-ended hyperbolic group $G$ turns out to be acylindrically hyperbolic. As a consequence, given a group $H$ and a morphism $\varphi : H \to \mathrm{Aut}(G)$, we deduce that the semidirect product $G \rtimes_\varphi H$ is acylindrically hyperbolic if and only if $\mathrm{ker}(H \overset{\varphi}{\to} \mathrm{Aut}(G) \to \mathrm{Out}(G))$ is finite. 
\end{abstract}

\tableofcontents

\section{Introduction}

Finding out some information about the automorphism group is one the most natural questions we can ask about a given group. It is also a particularly difficult one. The main reason is that, typically, it is much more difficult to work with the automorphism group than the group itself. As a consequence, being able to read properties of the automorphism group directly from the group may be quite useful.

Taking the point of view of geometric group theory, one of the most efficient methods to study a group is to exhibit some phenomena of negative curvature. So a natural but vague question is the following: does the negatively-curved geometry of a group survive when taking its automorphism group?

Such a question does not seem to be so unreasonable. For instance, the study of (outer) automorphism groups of free groups and of surface groups, two of the most studied families of groups in geometric group theory, is fundamentally based on the negatively-curved geometries of free groups and surface groups. Inspired by these two examples, we ask the following more precise question:

\begin{question}\label{question}
Is the automorphism group of a finitely generated acylindrically hyperbolic group acylindrically hyperbolic as well?
\end{question}

The class of acylindrically hyperbolic groups, as defined in \cite{OsinAcyl}, encompasses many interesting families of groups, including in particular non-elementary hyperbolic and relatively hyperbolic groups, most 3-manifold groups, groups of deficiency at least two, many small cancellation groups, the Cremona group of birational transformations of the complex projective plane, many Artin groups, and of course the two examples mentioned above, namely outer automorphism groups of free groups (of rank at least two) and mapping class groups of (non-exceptional) surfaces. Nevertheless, being acylindrically hyperbolic provides valuable information on the group, as many aspects of the theory of hyperbolic and relatively hyperbolic groups can be generalised in the context of acylindrical hyperbolicity. These include various algebraic, model-theoretic, and analytic properties; small cancellation theory; and group theoretic Dehn surgery. We refer to \cite{OsinSurvey} and references therein for more information. 

\medskip 
This article is dedicated to the proof of the following statement, which provides a partial positive answer to Question \ref{question}:

\begin{thm}\label{thm:main}
The automorphism group of any one-ended hyperbolic group is acylindrically hyperbolic.
\end{thm}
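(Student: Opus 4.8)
The plan is to exploit the fact that $\mathrm{Aut}(G)$ contains a well-understood acylindrically hyperbolic normal subgroup, namely $\mathrm{Inn}(G)$, and to transfer acylindrical hyperbolicity from this subgroup to the ambient group. First I would record the elementary reductions. Since $G$ is one-ended it is non-elementary and its centre $Z(G)$ is finite (an infinite centre would force $G$ to be virtually cyclic, hence two-ended). Thus $\mathrm{Inn}(G) \cong G/Z(G)$ is a non-elementary hyperbolic group, in particular acylindrically hyperbolic by \cite{OsinAcyl}, and it sits as a normal subgroup of $\mathrm{Aut}(G)$ with quotient $\mathrm{Out}(G)$. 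A short computation shows that its centraliser $C_{\mathrm{Aut}(G)}(\mathrm{Inn}(G))$ consists exactly of the automorphisms acting trivially on $G/Z(G)$; the map $\phi \mapsto (g \mapsto \phi(g)g^{-1})$ embeds these into $\mathrm{Hom}(G,Z(G))$, which is finite because $G$ is finitely generated and $Z(G)$ is finite. Hence $\mathrm{Inn}(G)$ is a normal, acylindrically hyperbolic subgroup with finite centraliser. If $\mathrm{Out}(G)$ is finite we are already done, since then $\mathrm{Inn}(G)$ has finite index in $\mathrm{Aut}(G)$ and acylindrical hyperbolicity is inherited by finite-index overgroups; so from now on I assume $\mathrm{Out}(G)$ infinite.

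The core of the argument is to promote a loxodromic element of $\mathrm{Inn}(G)$ to a generalised loxodromic element of $\mathrm{Aut}(G)$. By the Dahmani--Guirardel--Osin characterisation, it suffices to produce a proper infinite virtually cyclic subgroup of $\mathrm{Aut}(G)$ that is hyperbolically embedded. Concretely, I would fix an infinite-order $g \in G$, loxodromic in the hyperbolic group $\mathrm{Inn}(G)$, whose maximal elementary subgroup $E_{\mathrm{Inn}(G)}(\iota_g)$ is virtually cyclic and hyperbolically embedded in $\mathrm{Inn}(G)$; this holds for every infinite-order element of a hyperbolic group. The task is then to show that the elementary closure $E_{\mathrm{Aut}(G)}(\iota_g)$ remains virtually cyclic and weakly malnormal in $\mathrm{Aut}(G)$, for a suitably chosen $g$. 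Here the finite centraliser is the decisive non-degeneracy input: it prevents $\mathrm{Aut}(G)$ from commuting away the hyperbolic direction carried by $\iota_g$, which is exactly what fails for the holomorph $G \rtimes \mathrm{Aut}(G)$ and, more generally, what the semidirect-product dichotomy of the abstract detects.

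The obstruction is that an arbitrary $\iota_g$ may be fixed, up to the centre and up to conjugacy, by a large subgroup of $\mathrm{Out}(G)$, so that $E_{\mathrm{Aut}(G)}(\iota_g)$ fails to be virtually cyclic. This is where the canonical JSJ decomposition enters. For $G$ one-ended hyperbolic, Bowditch's canonical splitting over two-ended subgroups is invariant under $\mathrm{Aut}(G)$, and by Levitt's analysis a finite-index subgroup of $\mathrm{Out}(G)$ is built from the mapping class groups of the quadratically hanging vertices and from the virtually abelian group of twists (Dehn twists along the edges, together with bendings). Using this structure I would select $g$ to be a loxodromic element whose axis crosses the edges of the JSJ tree in a generic fashion and avoids the hanging orbifold pieces, so that no non-trivial twist, graph symmetry, or mapping class fixes its conjugacy class; combined with the finiteness of the central automorphisms, this pins down $E_{\mathrm{Aut}(G)}(\iota_g)$ as virtually cyclic and allows one to verify that it is hyperbolically embedded in $\mathrm{Aut}(G)$, for instance by checking a weak malnormality condition together with the persistence of the loxodromic $\iota_g$.

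The hard part will be precisely this last step, namely controlling the twist subgroup. Indeed, the twists form a virtually abelian, hence amenable, subgroup that acts trivially on any $\mathrm{Aut}(G)$-invariant JSJ tree; consequently no action on an automorphism-canonical tree can be acylindrical, and Minasyan--Osin-type tree criteria cannot be applied to $\mathrm{Aut}(G)$ directly. The whole of the negative curvature must therefore be supplied by $\mathrm{Inn}(G)$ and must be shown to survive the presence of the twists. Converting the finite-centraliser condition into an honest hyperbolically embedded virtually cyclic subgroup --- equivalently, producing a loxodromic element of $G$ whose elementary closure is not enlarged by any twist or symmetry --- is the technical heart of the proof, and the JSJ machinery is exactly the tool that makes this genericity argument rigorous. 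Once this is achieved, $\mathrm{Aut}(G)$ is acylindrically hyperbolic, and the semidirect-product statement of the abstract follows by applying the same finite-centraliser criterion to the normal subgroup $G$ of $G \rtimes_\varphi H$.
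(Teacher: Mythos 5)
Your proposal sets up a reasonable framework (reduce to $\mathrm{Out}(G)$ infinite, note that $\mathrm{Inn}(G)\cong G/Z(G)$ is non-elementary hyperbolic and has finite centraliser in $\mathrm{Aut}(G)$, then try to show that the elementary closure $E_{\mathrm{Aut}(G)}(\iota_g)$ of a suitable loxodromic inner automorphism is virtually cyclic and hyperbolically embedded), but it stops exactly where the theorem becomes hard: you write that selecting $g$ so that ``no non-trivial twist, graph symmetry, or mapping class fixes its conjugacy class'' is ``the technical heart of the proof'' and then do not carry it out. Having a normal acylindrically hyperbolic subgroup with finite centraliser is not known to imply acylindrical hyperbolicity of the ambient group, so this step cannot be waved through; and a ``genericity'' choice of $g$ inside the JSJ structure is an assertion, not an argument --- it is precisely the statement that only finitely many outer classes of automorphisms fix (a power of) $g$, which is what must be proved. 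The paper supplies a concrete mechanism for this that is entirely absent from your proposal: (i) a WPD criterion on the JSJ tree (Proposition \ref{prop:ifnotWPD}) showing that if $\iota(g^s)$ fails to be WPD for $\mathrm{Aut}(G)\curvearrowright T$ for every $s$, then infinitely many pairwise non-conjugate automorphisms send a fixed power of $g$ to the \emph{same element}; (ii) Paulin's construction together with Rips' theory (Proposition \ref{prop:RelativeSplitting}) converting such a family into a splitting of $G$ over a virtually cyclic subgroup \emph{relative to} $\langle g\rangle$; and (iii) the construction of an element $h$ admitting no such relative splitting, obtained by taking the normal closure of a WPD element (free by Theorem \ref{thm:mainDGO}, and meeting all vertex- and edge-stabilisers trivially) and invoking Whitehead's lemma (Proposition \ref{prop:FreeSplit}) inside a well-chosen finitely generated free subgroup. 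Steps (ii) and (iii) are the engine of the proof and have no counterpart in your outline; without them the ``finite stabiliser up to conjugacy'' claim, and a fortiori the hyperbolic embedding of $E_{\mathrm{Aut}(G)}(\iota_g)$, are unsupported.

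Two smaller points. First, your reduction in the case $\mathrm{Out}(G)$ finite appeals to ``acylindrical hyperbolicity is inherited by finite-index overgroups''; this is not a known general fact (invariance under commensurability is an open problem), but the conclusion is fine here for a different reason: $\mathrm{Inn}(G)$ is non-elementary \emph{hyperbolic}, and hyperbolicity is a quasi-isometry invariant, so $\mathrm{Aut}(G)$ is itself hyperbolic. Second, your remark that the twists preclude any acylindrical action on the JSJ tree is correct but not an obstruction to the paper's route: Osin's characterisation only requires a single WPD element for some action on a hyperbolic space, which is exactly what the action $\mathrm{Aut}(G)\curvearrowright T$ provides via $\iota(h)$. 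Note also that the paper must (and does) treat the virtually-surface-group case separately via mapping class groups, since Bowditch's JSJ decomposition is unavailable there; your outline does not address this case.
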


\noindent
Let us mention two applications of this statement.

\begin{cor}
Let $X$ be a compact $n$-dimensional Riemannian manifold of negative curvature, $n \geq 2$. Then the automorphism group $\mathrm{Aut}(\pi_1(X))$ is acylindrically hyperbolic.
\end{cor}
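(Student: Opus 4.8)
The plan is to reduce the statement to Theorem \ref{thm:main} by verifying that the fundamental group $\pi_1(X)$ is a one-ended hyperbolic group. The key tool is the geometric action of $\pi_1(X)$ on the universal cover $\widetilde{X}$, which carries a complete, simply connected Riemannian metric of negative sectional curvature.

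First I would record that, since $X$ is compact, its sectional curvature is bounded above by some constant $-\kappa^2 < 0$. By the Cartan--Hadamard theorem, the universal cover $\widetilde{X}$ is diffeomorphic to $\mathbb{R}^n$; moreover, as a Hadamard manifold whose curvature is bounded above by $-\kappa^2$, it is a $\mathrm{CAT}(-\kappa^2)$ space and is therefore Gromov hyperbolic. The deck transformation action of $\pi_1(X)$ on $\widetilde{X}$ is proper, cocompact, and by isometries, so the Milnor--\v{S}varc lemma shows that $\pi_1(X)$ is quasi-isometric to $\widetilde{X}$. Gromov hyperbolicity being a quasi-isometry invariant of geodesic spaces, it follows that $\pi_1(X)$ is a hyperbolic group.

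Next I would establish one-endedness. The number of ends of a finitely generated group coincides with the number of ends of any proper geodesic space on which it acts geometrically; applying this to the action on $\widetilde{X} \cong \mathbb{R}^n$ and using that $\mathbb{R}^n$ has exactly one end whenever $n \geq 2$, we conclude that $\pi_1(X)$ has exactly one end. In particular $\pi_1(X)$ is infinite and not virtually cyclic, hence non-elementary.

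Finally, having shown that $\pi_1(X)$ is a one-ended hyperbolic group, Theorem \ref{thm:main} applies directly and yields that $\mathrm{Aut}(\pi_1(X))$ is acylindrically hyperbolic. The only point requiring a little care is the passage from a negative curvature \emph{bound} to genuine Gromov hyperbolicity, which is exactly where compactness of $X$ is used to secure a strictly negative upper bound on the curvature; the remaining steps are standard facts about Hadamard manifolds together with the quasi-isometry invariance of the number of ends.
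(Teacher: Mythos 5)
Your proof is correct and follows essentially the same reduction as the paper: verify that $\pi_1(X)$ is a one-ended hyperbolic group and invoke Theorem \ref{thm:main}. The only (immaterial) difference is in how one-endedness is justified --- the paper notes that the Gromov boundary is an $(n-1)$-sphere, hence connected for $n \geq 2$, while you count the ends of $\widetilde{X} \cong \mathbb{R}^n$ via quasi-isometry invariance; both are standard and correct.
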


\begin{proof}
The fundamental group $\pi_1(X)$ is a hyperbolic group whose boundary is an $(n-1)$-sphere. Because $n \geq 2$, it must be one-ended. So Theorem \ref{thm:main} applies.
\end{proof}

\noindent
In our next application, \emph{random groups} are defined following \cite{ChampetierStat}. 

\begin{cor}
Almost surely, the automorphism group of a random group is acylindrically hyperbolic.
\end{cor}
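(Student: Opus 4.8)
The plan is to reduce the corollary to Theorem~\ref{thm:main}, whose hypothesis is exactly that the group be one-ended and hyperbolic. Thus it suffices to prove that a random group, in the model of \cite{ChampetierStat}, is almost surely one-ended and hyperbolic; once this is established, applying Theorem~\ref{thm:main} to a typical random group $G$ immediately gives that $\mathrm{Aut}(G)$ is acylindrically hyperbolic.

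First I would quote the foundational genericity results for this model: a random group is almost surely infinite, hyperbolic, torsion-free, and non-elementary (in particular it is neither finite nor virtually cyclic). This is the part of the argument that rests entirely on the literature on random groups rather than on anything proved here. The only remaining task is then to upgrade hyperbolicity to one-endedness. For this I would invoke Stallings' theorem on ends: a finitely generated group has more than one end if and only if it splits non-trivially over a finite subgroup. Being infinite and non-elementary, a random group has neither zero nor two ends, so the sole obstruction to one-endedness would be a splitting over a finite subgroup. As a random group is almost surely torsion-free, any finite subgroup is trivial, and such a splitting would therefore be a non-trivial free product decomposition. Hence it is enough to know that a random group is almost surely freely indecomposable.

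I expect free indecomposability to be the one genuinely substantive point, everything else being bookkeeping around standard facts. It can be obtained either by citing that random groups do not split (established by Dahmani, Guirardel and Przytycki in the density model, with analogues in the present model) or, more hands-on, from the small-cancellation-type isoperimetric properties of a random presentation: a non-trivial free splitting is incompatible with the linear isoperimetric inequality enjoyed by such presentations together with the fact that every generator occurs essentially in the defining relators. Either route shows a random group to be almost surely one-ended and hyperbolic, and Theorem~\ref{thm:main} then delivers the claimed almost-sure acylindrical hyperbolicity of its automorphism group.
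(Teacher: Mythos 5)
Your overall reduction is the same as the paper's: show the random group is almost surely one-ended and hyperbolic, then apply Theorem~\ref{thm:main}. Where you diverge is in how one-endedness is obtained. The paper quotes \cite[Th\'eor\`eme 4.18]{ChampetierStat} directly: in this model a random group is almost surely hyperbolic with boundary homeomorphic to the Menger curve, and since the boundary is connected (and non-empty) the group has exactly one end --- no Stallings, no torsion-freeness, no free indecomposability needed. Your route (infinite, non-elementary, torsion-free, then Stallings to reduce one-endedness to free indecomposability) is logically sound, but it shifts all the weight onto the claim that a random group is almost surely freely indecomposable, and your proposed justifications for that claim are the weak point: the Dahmani--Guirardel--Przytycki non-splitting result is proved in the Gromov density model, and asserting ``analogues in the present model'' without a reference is exactly the kind of gap a referee would flag, since the corollary is stated for the model of \cite{ChampetierStat} specifically. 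The irony is that the cleanest way to close that gap is the very boundary theorem the paper uses, at which point the detour through Stallings becomes unnecessary. Your approach does have the virtue of being more portable --- it would apply in any model where hyperbolicity, torsion-freeness, and free indecomposability are generic, even if nothing is known about the boundary --- but as written it is less self-contained than the paper's two-line argument.
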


\begin{proof}
Following \cite[Th\'eor\`eme 4.18]{ChampetierStat}, almost surely a random group is a hyperbolic group whose boundary is a Menger curve. Such a group must be one-ended. So Theorem~\ref{thm:main} applies.
\end{proof}

We emphasize that, although the structure of outer automorphism groups of one-ended hyperbolic groups is pretty well-understood (see \cite[Corollary 4.7]{HypIso}), it is not sufficient to deduce our theorem. In fact, outer automorphism groups are rarely acylindrically hyperbolic. Also, many of the properties which follow from the acylindrical hyperbolicity cannot be deduced from the study of the outer automorphism group. For instance, if the outer automorphism group is virtually free abelian (which happens in particular if the JSJ decomposition of our hyperbolic group contains only rigid pieces, so that Dehn twists generate a free abelian subgroups of finite index in the outer automorphism group; see \cite[Corollary 4.7]{HypIso} for more details), it is not clear how to deduce that the automorphism group is SQ-universal or that it satisfies the property $P_{\text{naive}}$; but it can be done as soon as we know that the automorphism group is acylindrically hyperbolic (see \cite[Theorem 2.33]{DGO} and \cite{Pnaive} respectively). 

\medskip
Our proof of Theorem \ref{thm:main} is based on JSJ decompositions of hyperbolic groups as constructed in \cite{BowditchJSJ}. Such a decomposition exists if our group $G$ is not virtually a surface group and if $\mathrm{Out}(G)$ is infinite. (Notice that, if $\mathrm{Out}(G)$ is finite, then $\mathrm{Aut}(G)$ turns out to be commensurable to $G$ as $\mathrm{Inn}(G) \simeq G/Z(G)$ has finite index in $\mathrm{Aut}(G)$; consequently, $\mathrm{Aut}(G)$ must be hyperbolic. And if $G$ is virtually a surface group, the acylindrical hyperbolicity of $\mathrm{Aut}(G)$ follows easily from what we know about mapping class groups of surfaces; see Corollary \ref{cor:MCGaut}.) Next, the argument goes as follows:
\begin{itemize}
	\item The starting point of our argument is the well-known fact that the canonicity of the JSJ decomposition of a one-ended hyperbolic group $G$ implies that the automorphism group $\mathrm{Aut}(G)$ naturally acts on the associated Bass-Serre tree $T$. 
	\item Next, we observe that $G$ acts on the JSJ tree $T$ with WPD elements. By applying a small cancellation theorem of \cite{DGO}, it follows that there exists some $g \in G$ such that the normal closure $\llangle g \rrangle$ is free and intersects trivially vertex- and edge-stabilisers of $T$. 
	\item The key point of the argument is to show that there exists a finitely generated free subgroup $H \leq \llangle g \rrangle$ which is not elliptic in any splitting of $G$ with virtually cyclic edge-groups. As a consequence of Whitehead's work \cite{Whitehead}, we know that there exists some $h \in H$ such that $H$ does not split freely relatively to $\langle h \rangle$.
	\item By looking at the action $\mathrm{Aut}(G) \curvearrowright T$, it turns out that, if the inner automorphism $\iota(h)$ is not WPD with respect to $\mathrm{Aut}(G) \curvearrowright T$, then there must exist infinitely many pairwise non-conjugate automorphisms sending a fixed power of $h$ to the same element. 
	\item By applying Paulin's construction \cite{PaulinOut}, one gets an action of $G$ on some real tree, namely one of the asymptotic cones of $G$, with respect to which $g$ is elliptic. As a consequence of Rips' theory, as exposed in \cite{GuirardelTrees}, it follows that $G$ splits relatively to $\langle h \rangle$ over a virtually cyclic subgroup.
	\item The consequence is that $H$ must split freely relatively to $\langle h \rangle$, which is impossible by definition of $h$. Therefore, the inner automorphism $\iota(h)$ turns out to be a WPD element with respect to $\mathrm{Aut}(G) \curvearrowright T$, proving the acylindrical hyperbolicity of $\mathrm{Aut}(G)$. 
\end{itemize}

As it can be seen, two points are fundamental in our strategy: the JSJ decomposition has to be canonical, and a sequence of pairwise non-conjugate automorphisms has to lead to a splitting of the group. We expect that our arguments still hold when these two ingredients are present, leading to the acylindrical hyperbolicity of the automorphism group. Many JSJ decompositions of many different kinds of groups can be found in the literature, but very few are known to be canonical. And the combination of Paulin's construction with Rips' theory is essentially the only known method to construct splittings from sequences of automorphisms, but most of the time it is a strategy which is difficult to apply outside the world of hyperbolic groups where asymptotic cones are not real trees. Nevertheless, part of these arguments has been applied to some right-angled Artin groups in \cite{AcylAutRAAG}, and we expect that the same strategy will allow us to extend Theorem \ref{thm:main} to toral relatively hyperbolic groups.

\medskip
Interestingly, Theorem \ref{thm:main} provides non-trivial information on extensions of hyperbolic groups. In fact, in full generality, the acylindrical hyperbolicity of the automorphism group implies that many extensions of the group must be acylindrically hyperbolic as well. More precisely:

\begin{thm}\label{thm:extension}
Let $G$ be a group whose center is finite and whose automorphism group is acylindrically hyperbolic. Fix a group $H$ and a morphism $\varphi : H \to \mathrm{Aut}(G)$. The semidirect product $G \rtimes_\varphi H$ is acylindrically hyperbolic if and only if $$K:= \mathrm{ker} \left( H \overset{\varphi}{\to} \mathrm{Aut}(G) \to \mathrm{Out}(G) \right)$$ is a finite subgroup of $H$.
\end{thm}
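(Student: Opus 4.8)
The plan is to analyse the natural homomorphism
$$\Phi : G\rtimes_\varphi H \longrightarrow \mathrm{Aut}(G), \qquad (g,h)\longmapsto \iota(g)\varphi(h),$$
where $\iota : G\to\mathrm{Aut}(G)$ sends $g$ to conjugation by $g$. This is a homomorphism: using the identity $\alpha\iota(g)\alpha^{-1}=\iota(\alpha(g))$ together with the fact that conjugation by $h$ in $G\rtimes_\varphi H$ realises $\varphi(h)$, one checks the defining relations are respected. A direct computation shows that $(g,h)$ lies in $\ker\Phi$ if and only if $\iota(g)\varphi(h)=\mathrm{id}$, which is exactly the condition for $(g,h)$ to centralise the normal copy of $G$ inside $G\rtimes_\varphi H$; thus $\ker\Phi = C_{G\rtimes_\varphi H}(G)$. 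Projecting $\ker\Phi$ onto its $H$-coordinate gives a surjection onto $K$ with kernel $Z(G)$, i.e. a short exact sequence $1\to Z(G)\to\ker\Phi\to K\to 1$. Since $\mathrm{Aut}(G)$ is acylindrically hyperbolic it is infinite, forcing $G$ to be infinite and $Z(G)$ finite; hence $\ker\Phi$ is finite if and only if $K$ is finite. The theorem thus reduces to showing that $G\rtimes_\varphi H$ is acylindrically hyperbolic if and only if $C_{G\rtimes_\varphi H}(G)=\ker\Phi$ is finite.

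For the forward implication I would invoke the following general fact, which I expect to be the main obstacle to pin down cleanly: \emph{if $\Gamma$ is acylindrically hyperbolic and $N\trianglelefteq\Gamma$ is infinite, then $C_\Gamma(N)$ is finite}. Applying this to the infinite normal subgroup $G\trianglelefteq G\rtimes_\varphi H$ immediately yields that $C_{G\rtimes_\varphi H}(G)=\ker\Phi$ is finite, and therefore $K$ is finite.

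To establish that fact, fix a non-elementary acylindrical action of $\Gamma$ on a hyperbolic space $S$ (using \cite{OsinAcyl}). An infinite normal subgroup $N$ cannot be elliptic: if it quasi-fixed a point $x$, then normality would make it quasi-fix every $g^n x$ along the axis of a loxodromic $g$, and acylindricity would bound the number of such elements, contradicting $|N|=\infty$. Nor can $N$ be virtually cyclic loxodromic, since then $\Gamma$ would preserve its endpoint pair and the action would be elementary. Hence $N$ is non-elementary and contains two independent loxodromics $t,t'$. If $C:=C_\Gamma(N)$ were infinite, then $C$ (being itself normal) would, by the same dichotomy, contain a loxodromic $s$. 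But $s$ commutes with both $t$ and $t'$, and in an acylindrical action commuting loxodromics are co-axial; so $s$ would share its endpoint pair with each of $t$ and $t'$, forcing $\{t^\pm\}=\{t'^\pm\}$ and contradicting their independence. Thus $C_\Gamma(N)$ is finite.

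For the converse, suppose $K$, and hence $\ker\Phi$, is finite. Fix an acylindrical action $\mathrm{Aut}(G)\curvearrowright S$. The subgroup $\mathrm{Inn}(G)\cong G/Z(G)$ is infinite and normal in $\mathrm{Aut}(G)$, so by the dichotomy above it contains a loxodromic element $\iota(h_0)$, which is moreover WPD since every loxodromic of an acylindrical action is WPD. Pulling back through $\Phi$, the element $(h_0,1)\in G\rtimes_\varphi H$ acts on $S$ exactly as $\iota(h_0)$, hence is loxodromic; and it remains WPD for the action $G\rtimes_\varphi H\curvearrowright S$, because the relevant constraint sets are the $\Phi$-preimages of the finite constraint sets witnessing WPD-ness of $\iota(h_0)$, and these preimages are finite as $\ker\Phi$ is finite. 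Finally $G\rtimes_\varphi H$ contains $G$, which is not virtually cyclic because its quotient $\mathrm{Inn}(G)$ is acylindrically hyperbolic; so $G\rtimes_\varphi H$ is not virtually cyclic. By the characterisation of acylindrical hyperbolicity via a loxodromic WPD element \cite{OsinAcyl,DGO}, it follows that $G\rtimes_\varphi H$ is acylindrically hyperbolic, completing the equivalence.
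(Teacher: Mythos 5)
Your proof is correct, and while the converse direction ends up in essentially the same place as the paper's, the forward direction takes a genuinely different route. For the ``only if'' part, the paper picks a single generalised loxodromic element $g$ of the infinite normal subgroup $G$ (via \cite[Lemma 7.2]{OsinAcyl}), quotes \cite[Corollary 6.6]{DGO} to get that $C_{G\rtimes_\varphi H}(g)$ is virtually $\langle g\rangle$, and injects $K$ into the finite set $C(g)/\langle g\rangle$ by $k\mapsto k\alpha(k)^{-1}\langle g\rangle$; you instead identify $K$, up to the finite kernel $Z(G)$, with the full centraliser $C_{G\rtimes_\varphi H}(G)=\ker\Phi$ and prove from the classification of acylindrical actions that centralisers of infinite normal subgroups of acylindrically hyperbolic groups are finite. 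Both are valid; the paper's version needs only the quoted \cite{DGO} statement (and, as the paper remarks, works for non-split extensions without any hypothesis on $Z(G)$ --- though your version does too, since $K$ is a quotient of the finite $\ker\Phi$ regardless of $Z(G)$), whereas yours isolates a clean general fact whose proof you should either flesh out or attribute (it is essentially the argument behind \cite[Lemma 7.2]{OsinAcyl}, and the step ``commuting loxodromics are co-axial'' rests on the virtual cyclicity of stabilisers of endpoint pairs for acylindrical actions, \cite[Lemma 6.5]{DGO}). For the ``if'' part, your homomorphism $\Phi:(g,h)\mapsto\iota(g)\varphi(h)$ has image $\langle \mathrm{Inn}(G),\varphi(H)\rangle$ and finite kernel, which is exactly the subgroup the paper reduces to via its two short exact sequences and \cite[Lemma 1]{ErratumMO}; pulling back a WPD loxodromic inner automorphism through $\Phi$ is a correct substitute for citing the stability of acylindrical hyperbolicity under finite-kernel quotients and extensions. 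Two small points to tidy: justify that loxodromic elements of acylindrical actions are WPD (standard, but worth a citation), and make explicit why $\mathrm{Inn}(G)$ is not virtually cyclic (it acts non-elementarily on $S$ by your own dichotomy), since that is what rules out $G\rtimes_\varphi H$ being virtually cyclic at the end.
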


Because non-elementary hyperbolic groups have finite centers, the following statement is an immediate consequence of Theorems \ref{thm:extension} and \ref{thm:main}:

\begin{cor}
Let $G$ be a one-ended hyperbolic group. Fix a group $H$ and a morphism $\varphi : H \to \mathrm{Aut}(G)$. The semidirect product $G \rtimes_\varphi H$ is acylindrically hyperbolic if and only~if $$K:= \mathrm{ker} \left( H \overset{\varphi}{\to} \mathrm{Aut}(G) \to \mathrm{Out}(G) \right)$$ is a finite subgroup of $H$.
\end{cor}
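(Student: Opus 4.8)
The plan is to deduce the statement immediately from Theorems~\ref{thm:extension} and~\ref{thm:main} by checking that a one-ended hyperbolic group satisfies the hypotheses needed to apply the former. So the real content lies entirely in those two results, and what remains is to verify that they apply to our $G$.

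First I would observe that a one-ended hyperbolic group $G$ is necessarily \emph{non-elementary}: an elementary hyperbolic group is virtually cyclic, hence has either zero or two ends, so having exactly one end rules this out. Second, I would check that $G$ has finite center. Since $Z(G)$ is an abelian subgroup of a hyperbolic group, it is virtually cyclic; were it infinite, it would contain a central element $z$ of infinite order, whose centralizer would then be all of $G$. But in a hyperbolic group the centralizer of an infinite-order element is virtually cyclic, which would force $G$ itself to be virtually cyclic, contradicting non-elementarity. Hence $Z(G)$ is finite.

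Third, Theorem~\ref{thm:main} tells us directly that $\mathrm{Aut}(G)$ is acylindrically hyperbolic. At this point $G$ meets both hypotheses of Theorem~\ref{thm:extension} — finite center and acylindrically hyperbolic automorphism group — so I would simply invoke that theorem to conclude that $G \rtimes_\varphi H$ is acylindrically hyperbolic if and only if the kernel $K$ is finite.

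The only point requiring any argument at all is the finiteness of the center, which is a standard fact about non-elementary hyperbolic groups; everything else is a direct citation of the two theorems already established. I therefore do not expect any genuine obstacle: the corollary is precisely the specialization of Theorem~\ref{thm:extension} to the class of groups covered by Theorem~\ref{thm:main}.
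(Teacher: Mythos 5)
Your proposal is correct and matches the paper exactly: the paper likewise derives this corollary as an immediate consequence of Theorems~\ref{thm:extension} and~\ref{thm:main}, using only the fact that a non-elementary (in particular, one-ended) hyperbolic group has finite center. Your verification of that fact via centralizers of infinite-order elements is a standard and valid way to justify the one step the paper leaves implicit.
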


\noindent
In the specific case of surface groups, one gets:

\begin{cor}
Let $S$ be a closed orientable surface of genus $\geq 2$. Fix a point $p \in S$, a group $H$ and morphism $\varphi : H \to \mathrm{MCG}^{\pm}(S, p)$. The semidirect product $\pi_1(S,p) \rtimes_\varphi H$ is acylindrically hyperbolic if and only if $\mathrm{ker} \left( H \overset{\varphi}{\to} \mathrm{MCG}^{\pm}(S,p) \to \mathrm{MCG}^{\pm}(S) \right)$ is finite.
\end{cor}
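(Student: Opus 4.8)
The plan is to deduce this corollary directly from Theorem~\ref{thm:extension}, once the two mapping class groups appearing in the statement have been reinterpreted as the automorphism and outer automorphism groups of the surface group. First I would set $G := \pi_1(S,p)$. Since $S$ is a closed orientable surface of genus at least two, $G$ is a one-ended hyperbolic group, and it is centerless (closed surface groups of genus $\geq 2$ have trivial center); in particular its center is finite, so $G$ satisfies the hypotheses of Theorem~\ref{thm:extension}. By Theorem~\ref{thm:main}, or equivalently by Corollary~\ref{cor:MCGaut} which treats the surface case directly, the automorphism group $\mathrm{Aut}(G)$ is acylindrically hyperbolic.

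Next I would invoke the Dehn--Nielsen--Baer theorem in its pointed form: sending a homeomorphism fixing $p$ to the automorphism it induces on $\pi_1(S,p)$ yields an isomorphism $\mathrm{MCG}^{\pm}(S,p) \xrightarrow{\ \sim\ } \mathrm{Aut}(G)$, while the classical unpointed version gives $\mathrm{MCG}^{\pm}(S) \cong \mathrm{Out}(G)$. Under this first identification, the homomorphism $\varphi : H \to \mathrm{MCG}^{\pm}(S,p)$ becomes an action of $H$ on $G$ by automorphisms, so that $\pi_1(S,p) \rtimes_\varphi H$ is literally the semidirect product $G \rtimes_\varphi H$ of Theorem~\ref{thm:extension}. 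The crucial compatibility is that, under the two Dehn--Nielsen--Baer identifications, the forgetful homomorphism $\mathrm{MCG}^{\pm}(S,p) \to \mathrm{MCG}^{\pm}(S)$ coincides with the canonical projection $\mathrm{Aut}(G) \to \mathrm{Out}(G)$. Granting this, the kernel $\mathrm{ker}\!\left( H \to \mathrm{MCG}^{\pm}(S,p) \to \mathrm{MCG}^{\pm}(S)\right)$ is the very same subgroup of $H$ as the kernel $K = \mathrm{ker}\!\left( H \to \mathrm{Aut}(G) \to \mathrm{Out}(G)\right)$ of Theorem~\ref{thm:extension}, and that theorem then gives that $G \rtimes_\varphi H$ is acylindrically hyperbolic if and only if this kernel is finite, which is exactly the claim.

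The only genuine point to verify -- and the step I would treat as the main, though standard, obstacle -- is the commutativity of the square relating the two Dehn--Nielsen--Baer isomorphisms to the projections onto the (outer) automorphism and mapping class groups. I would establish it by comparing the Birman exact sequence $1 \to \pi_1(S) \to \mathrm{MCG}^{\pm}(S,p) \to \mathrm{MCG}^{\pm}(S) \to 1$ with the inner--outer exact sequence $1 \to \mathrm{Inn}(G) \to \mathrm{Aut}(G) \to \mathrm{Out}(G) \to 1$: because $Z(G)$ is trivial one has $\mathrm{Inn}(G) \cong G = \pi_1(S)$, and the point-pushing subgroup of $\mathrm{MCG}^{\pm}(S,p)$ is carried precisely onto the inner automorphisms, so the two sequences are identified and the desired square commutes. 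Everything else is a formal substitution into Theorem~\ref{thm:extension}.
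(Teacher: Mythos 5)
Your proposal is correct and follows essentially the same route as the paper: the paper likewise deduces the corollary by applying Theorem~\ref{thm:extension} to $G=\pi_1(S,p)$ and invoking the commutative diagram that identifies the Birman exact sequence $1 \to \pi_1(S,p) \to \mathrm{MCG}^{\pm}(S,p) \to \mathrm{MCG}^{\pm}(S) \to 1$ with $1 \to \mathrm{Inn}(G) \to \mathrm{Aut}(G) \to \mathrm{Out}(G) \to 1$ via the (pointed and unpointed) Dehn--Nielsen--Baer isomorphisms. Your additional verification of the commutativity of that square, and of the hypotheses of Theorem~\ref{thm:extension} (trivial center, acylindrical hyperbolicity of $\mathrm{Aut}(G)$ via Corollary~\ref{cor:MCGaut}), only makes explicit what the paper delegates to the cited diagram.
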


In this statement, $\mathrm{MCG}^\pm(S)$ (resp. $\mathrm{MCG}^\pm (S,p)$) refers to the \emph{extended mapping class group}, i.e., the group of diffeomorphisms of $S$ (resp. the group of diffeomorphisms of $S$ fixing $p$) up to isotopy (resp. up to isotopy fixing $p$). An orientation of $S$ is not assumed to be preserved. There clearly exists a morphism $\psi : \mathrm{MCG}^\pm(S,p) \to \mathrm{Aut}(\pi_1(S,p))$, and we denote by abuse of notation $G\rtimes_\varphi H$ the semidirect product $G\rtimes_{\psi \circ \varphi} H$. Finally, the morphism $f : \mathrm{MCG}^\pm(S,p) \to \mathrm{MCG}^\pm (S)$ is the natural morphism which just ``forgets'' the point $p$.

In the specific case of cyclic extensions, i.e., when $H$ is infinite cyclic and acts on $G$ through an automorphism $\varphi \in \mathrm{Aut}(G)$, Theorem \ref{thm:extension} shows that $G \rtimes_\varphi H$ is acylindrically hyperbolic if and only if $\varphi$ has infinite order in $\mathrm{Out}(G)$. A similar statement holds for free groups \cite{FreeByCyclicAcyl} (up to finite index) and for some right-angled Artin groups \cite{AcylAutRAAG}. We do not know any counterexample to this statement in the context of arbitrary finitely generated acylindrically hyperbolic groups. (Such a counterexample would provide a negative answer to Question \ref{question}.)

\medskip
Let us conclude this introduction with a few remarks. First, it is worth noticing that Question \ref{question} has a negative answer if the group is not assumed to be finitely generated, as shown by \cite[Remark 4.10]{GM}. Next, Theorem \ref{thm:main} does not apply to all non-elementary hyperbolic groups: what about infinitely-ended hyperbolic groups? In this case, the hyperbolic group splits over a finite subgroup, but we cannot expect to make the automorphism group act on the associated Bass-Serre tree since automorphism groups of free products may satisfy strong fixed-point properties \cite{AutUnivCoxFAn, AutFreeFAn, AutFreeT, AutFreeTGeneral} including Serre's property FA \cite{FAmcg, AutFreeProductFA}. So a different approach is needed here. Nevertheless, we expect that the automorphism group of any finitely generated group (not necessarily hyperbolic) splitting over a finite subgroup is acylindrically hyperbolic (or virtually cyclic if the initial group was virtually cyclic). 

\paragraph{Organisation of the paper.} In Section \ref{section:JSJ}, we collect the few statements about JSJ decompositions of hyperbolic groups which will be needed in the paper. In Section \ref{section:WPD}, we state and prove a sufficient condition for an element of a hyperbolic group which is loxodromic in the JSJ tree to define a WPD element of the automorphism group via the inner automorphism associated to it. And in Section \ref{section:Paulin}, we construct a relative splitting of the group when this condition fails. Finally, Section \ref{section:main} is dedicated to the proof of Theorem \ref{thm:main}, and Section \ref{section:extension} to Theorem \ref{thm:extension}.

\paragraph{Acknowledgment.} I am grateful to the anonymous referee for his comments and suggestions on the first version of this article, which led to several improvements.

\section{JSJ decompositions of hyperbolic groups}\label{section:JSJ}

\noindent
Our study of automorphism groups of one-ended hyperbolic groups is based on the notion of \emph{JSJ decompositions}. Initially introduced by Sela in \cite{SelaJSJ}, we use the construction given by Bowditch in \cite{BowditchJSJ}. A simplified version of \cite[Theorem 0.1]{BowditchJSJ} is:

\begin{thm}\label{thm:JSJ}
Let $G$ be a one-ended hyperbolic group which is not virtually a surface group. Then there is a canonical splitting of $G$ as a finite graph of groups such that each edge-group is virtually infinite cyclic and such that there exist two types of vertex-groups:
\begin{itemize}
	\item vertex-groups of type 1 are virtually free;
	\item and vertex-groups of type 2 are quasiconvex subgroups not of type 1.
\end{itemize}
Moreover, two vertex-groups of type 2 cannot be adjacent, and any virtually cyclic subgroup on which $G$ splits can be conjugate into an edge-group or a vertex-group of type~1. 
\end{thm}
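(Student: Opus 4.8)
The plan is to deduce the statement from Bowditch's canonical JSJ construction \cite[Theorem 0.1]{BowditchJSJ} and then repackage its output into the two-type form asserted here. Bowditch builds the splitting not from algebraic data but from the topology of the Gromov boundary $\partial G$: since $G$ is one-ended, $\partial G$ is a connected, locally connected compactum with no global cut point (by the Bowditch--Swarup result), and the canonical splitting over two-ended subgroups is read off from the arrangement of its local cut points and cut pairs. The output is a canonical finite graph of groups with two-ended (hence virtually cyclic) edge-groups and three kinds of vertex-groups: two-ended ``edge-like'' vertices, maximal hanging Fuchsian vertices (virtually fundamental groups of compact $2$-orbifolds with nonempty boundary), and rigid vertices (non-elementary quasiconvex subgroups admitting no further two-ended splitting relative to their incident edge-groups). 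The hypothesis that $G$ is not virtually a surface group guarantees that this graph is nontrivial and that no vertex-group is all of $G$; it also forces the hanging Fuchsian pieces to be orbifolds with boundary rather than closed ones.

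Next I would classify the vertices by virtual freeness. Two-ended vertex-groups are virtually cyclic, hence virtually free, and maximal hanging Fuchsian vertex-groups, being virtually fundamental groups of orbifolds with nonempty boundary, are virtually free as well. Declaring every virtually free vertex-group ``type 1'' and every remaining vertex-group ``type 2'' yields exactly the asserted dichotomy: the type-2 vertices are quasiconvex (quasiconvexity of the vertex- and edge-groups is part of Bowditch's conclusion) and, by construction, not virtually free, so they are precisely the non-virtually-free rigid pieces. To arrange that no two type-2 vertices are adjacent, I would subdivide any edge joining two such vertices by inserting a new vertex carrying a copy of the (virtually cyclic) edge-group; this vertex is type 1, the operation is canonical and $\mathrm{Aut}(G)$-equivariant, and it leaves every edge-group virtually cyclic. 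This produces the finite graph of groups in the stated form.

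Finally, the universality clause is the defining feature of a JSJ decomposition and is part of Bowditch's theorem. Any virtually cyclic subgroup $C$ over which $G$ splits is elliptic in the canonical tree $T$, so it fixes a vertex and is conjugate into some vertex-group. If that vertex were of type 2, the ambient splitting would induce a two-ended splitting of the rigid vertex-group relative to its incident edge-groups, contradicting rigidity, unless $C$ were already conjugate into an incident edge-group. Hence $C$ can be conjugated into an edge-group or into a virtually free (type-1) vertex-group, which is exactly the enclosing property claimed.

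I expect the genuine difficulty to lie entirely in Bowditch's construction --- the passage from the local cut-point structure of $\partial G$ to a canonical, $\mathrm{Aut}(G)$-equivariant tree --- which I would take as a black box. The only real work on my side is the bookkeeping: verifying that hanging Fuchsian pieces are virtually free, that the subdivision preserves canonicity, and that the universality statement survives the repackaging.
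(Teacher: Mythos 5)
Your proposal follows essentially the same route as the paper: Theorem~\ref{thm:JSJ} is presented there simply as a repackaging of Bowditch's \cite[Theorem 0.1]{BowditchJSJ}, with the same dictionary (type 1 $=$ Bowditch's two-ended and maximal hanging Fuchsian vertices, both virtually free; type 2 $=$ the remaining non-elementary quasiconvex vertices) and the same translation of hypotheses (not virtually a surface group $\Leftrightarrow$ not cocompact Fuchsian, and local connectedness of the boundary is automatic for one-ended hyperbolic groups). Two corrections, however. First, your subdivision step is unnecessary: in Bowditch's decomposition every vertex of type (ii) or (iii) is adjacent only to two-ended vertices, so no two type-2 vertices are ever adjacent in the original graph of groups; since the rest of the paper works with this specific canonical tree, it is better not to modify it. Second, your claim that the hypothesis ``not virtually a surface group'' guarantees that the graph is nontrivial is false: the paper explicitly points out that the JSJ decomposition may be trivial (for instance for a one-ended torsion-free hyperbolic group with finite outer automorphism group, which does not split over any cyclic subgroup), and nontriviality is established separately in Lemma~\ref{lem:JSJnontrivial} under the \emph{additional} assumption that $\mathrm{Out}(G)$ is infinite. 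Neither point invalidates the theorem as stated, but the second would become a genuine error if carried into later arguments.
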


\noindent
(In Bowditch's original statement, the hyperbolic group is required not to be a cocompact Fuchsian group and to have a locally connected boundary. However, as a consequence of \cite[Theorem 5.4]{BoundaryHyp} and the related discussion, a hyperbolic group is a cocompact Fuschian group if and only if it is virtually a surface group; and, as discussed in the introduction of \cite{BowditchJSJ}, the boundary of a one-ended hyperbolic group is always locally connected. Another difference is that \cite[Theorem 0.1]{BowditchJSJ} provides three types of vertex-groups: (i) two-ended subgroups (or equivalently, virtually $\mathbb{Z}$ subgroups), (ii) maximal ``hanging Fuchsian'' subgroups, (iii) non-elementary quasiconvex subgroups not of type (i). As discussed after \cite[Theorem 0.1]{BowditchJSJ}, vertex-groups of type (ii) are virtually free. Our vertex-groups of type 1 (resp. of type 2) correspond to vertex-groups of type (i) or (ii) (resp. of type (iii)) in Bowditch's terminology.)

\medskip \noindent
We refer to this decomposition of $G$ as its \emph{JSJ decomposition}, and to the associated Bass-Serre tree as its \emph{JSJ tree}. The uniqueness of the decomposition follows from the fact that Bowditch's construction is based on the boundary of $G$. See the discussion in \cite[Section 6]{BowditchJSJ} for more information. As a consequence, the JSJ tree turns out to be preserved by automorphisms. More precisely, we have:

\begin{prop}
Let $G$ be a one-ended hyperbolic group which is not virtually a surface group. Denote by $T$ the JSJ tree of $G$. The center $Z(G)$ of $G$ is included into the kernel of the action $G \curvearrowright T$, and, if we identify canonically the quotient $G/Z(G)$ with the subgroup of inner automorphisms $\mathrm{Inn}(G) \leq \mathrm{Aut}(G)$, then the action $G/Z(G) \curvearrowright T$ extends to an action $\mathrm{Aut}(G) \curvearrowright T$ via:
$$\left\{ \begin{array}{ccc} \mathrm{Aut}(G) & \to & \mathrm{Isom}(T) \\ \varphi & \mapsto & (x \mapsto \text{vertex whose stabiliser is $\varphi(\mathrm{stab}(x))$}) \end{array} \right. .$$
\end{prop}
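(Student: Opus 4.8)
The plan is to establish the proposition in two stages: first, that $Z(G)$ lies in the kernel of $G \curvearrowright T$, and second, that the induced action $G/Z(G) = \mathrm{Inn}(G) \curvearrowright T$ extends to all of $\mathrm{Aut}(G)$ via the prescribed formula. For the first stage, I would argue that $Z(G)$, being normal in a one-ended hyperbolic group that is not virtually cyclic, must be finite (indeed, a non-elementary hyperbolic group has finite center). A finite normal subgroup acts on the tree $T$ with a global fixed point, but because $T$ comes from a JSJ decomposition all of whose edge-groups are virtually cyclic and all of whose vertex-stabilisers are infinite, any element of $Z(G)$ fixes every vertex: the point is that $z \in Z(G)$ stabilises each vertex-group setwise (by normality and centrality it normalises, and in fact centralises, each conjugate), and since the vertex-stabilisers are the full vertex-groups, $z$ fixes each vertex of $T$. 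Thus $Z(G) \leq \ker(G \curvearrowright T)$.

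For the second stage, the crucial input is the canonicity asserted in Theorem \ref{thm:JSJ}: the JSJ decomposition, hence the JSJ tree $T$, is invariant under $\mathrm{Aut}(G)$ in the sense that any $\varphi \in \mathrm{Aut}(G)$ permutes the conjugacy classes of vertex- and edge-groups, preserving the types and the incidence structure. Concretely, I would use that the set of vertex-stabilisers $\{\mathrm{stab}(x) : x \in VT\}$ is a canonical collection of subgroups of $G$, closed under the action of $\mathrm{Aut}(G)$ by image: for each $\varphi$ and each vertex $x$, the subgroup $\varphi(\mathrm{stab}(x))$ is again a vertex-stabiliser of the same type, i.e.\ equals $\mathrm{stab}(y)$ for a unique vertex $y$. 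Uniqueness of $y$ follows because distinct vertices of a (minimal, reduced) Bass-Serre tree have distinct stabilisers when the stabilisers are infinite and the action is cocompact with the JSJ's rigidity — so the assignment $x \mapsto y$ is a well-defined bijection on $VT$. One then checks it respects adjacency (edges correspond to intersections of adjacent vertex-stabilisers, which are carried to edge-groups by $\varphi$), giving a simplicial automorphism of $T$; this defines the map $\mathrm{Aut}(G) \to \mathrm{Isom}(T)$.

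Finally I would verify this is a genuine homomorphism and that it restricts to the original action of $\mathrm{Inn}(G)$. Functoriality is immediate from $(\varphi \circ \psi)(\mathrm{stab}(x)) = \varphi(\psi(\mathrm{stab}(x)))$, so the assignment is multiplicative once well-definedness is in hand. For the restriction, if $\varphi = \iota(g)$ is conjugation by $g$, then $\varphi(\mathrm{stab}(x)) = g\,\mathrm{stab}(x)\,g^{-1} = \mathrm{stab}(gx)$, so $\iota(g)$ acts exactly as $g$ does, confirming the extension agrees with $G/Z(G) \curvearrowright T$ under the identification $G/Z(G) \simeq \mathrm{Inn}(G)$.

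I expect the main obstacle to be the well-definedness of the vertex assignment, namely that a vertex of $T$ is uniquely determined by its stabiliser. This requires knowing that the JSJ tree is minimal and reduced and that the vertex-groups are pairwise non-conjugate in a way that is detected by the subgroups themselves; the rigidity built into Bowditch's canonical construction should supply precisely this, since the whole tree is reconstructed functorially from the boundary $\partial G$, on which $\mathrm{Aut}(G)$ acts. The remaining steps — the finiteness and fixing property of the center, the adjacency preservation, and the homomorphism check — are comparatively routine once this canonical rigidity is invoked.
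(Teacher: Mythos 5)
Your argument is correct in substance, and for the extension to $\mathrm{Aut}(G)$ it coincides with the paper's: everything rests on the canonicity of Bowditch's decomposition, so that $\mathrm{Aut}(G)$ permutes the canonical family of vertex stabilisers, and the only delicate point --- that a vertex of $T$ is determined by its stabiliser, so that the displayed formula is well defined --- is exactly the one you flag and defer to Bowditch's rigidity (the paper does the same, pointing to the fact that the tree is reconstructed functorially from $\partial G$). Where you genuinely diverge is in the treatment of the center. The paper's justification is that $Z(G)$ is contained in the maximal finite normal subgroup of $G$, which is precisely the kernel of the action $G \curvearrowright \partial G$; since the tree is built from the boundary, $Z(G)$ acts trivially on $T$. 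You instead argue on the tree itself, via $\mathrm{stab}(zx) = z\,\mathrm{stab}(x)\,z^{-1} = \mathrm{stab}(x)$, which makes even this step depend on vertices being determined by their stabilisers. That is not wrong, but it leans on the hardest ingredient where a softer one suffices: since $G$ is non-elementary, $Z(G)$ is finite, so each $z \in Z(G)$ is elliptic, its fixed subtree satisfies $g \cdot \mathrm{Fix}(z) = \mathrm{Fix}(gzg^{-1}) = \mathrm{Fix}(z)$ for all $g \in G$, and minimality of $G \curvearrowright T$ (recorded in the paper's proof of Lemma \ref{lem:JSJnontrivial}, citing Bowditch) forces $\mathrm{Fix}(z) = T$. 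Either the boundary argument or this minimality argument gives you the center's triviality without invoking the determination of vertices by their stabilisers, which is then needed only for the well-definedness of the $\mathrm{Aut}(G)$-action; your verification that the extension restricts to the original action on $\mathrm{Inn}(G)$ via $\iota(g)(\mathrm{stab}(x)) = \mathrm{stab}(gx)$ is exactly right.
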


\noindent
The fact that the action of the center $Z(G)$ of $G$ on the JSJ tree is trivial follows from the observation that $Z(G)$ acts trivially on the boundary of $G$. (More generally, the kernel of the action $G \curvearrowright \partial G$ turns out to coincide with the unique maximal finite normal subgroup of $G$.)

\medskip \noindent
We emphasize that the JSJ decomposition may be trivial. Indeed, taking a one-ended and torsion-free hyperbolic group with a finite outer automorphism group, the JSJ decomposition must be trivial since such a group does not split over a cyclic subgroup according to \cite[Theorem 1.4]{LevittOut}. However, as shown by our next lemma, the JSJ decomposition turns out to be non-trivial as soon as the outer automorphism group is infinite. (Notice that, as first observed in \cite{MillerJSJ}, the outer automorphism group may be finite and the JSJ decomposition non-trivial.)

\begin{lemma}\label{lem:JSJnontrivial}
Let $G$ be a one-ended hyperbolic group which is not virtually a surface group. If $\mathrm{Out}(G)$ is infinite, then the JSJ decomposition of $G$ is non-trivial. As a consequence, the action of $G$ on its JSJ tree admits loxodromic isometries.
\end{lemma}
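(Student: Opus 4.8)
The plan is to manufacture a nontrivial splitting of $G$ over a virtually cyclic subgroup out of the infiniteness of $\mathrm{Out}(G)$, and then to play this splitting against the last clause of Theorem \ref{thm:JSJ}. The main input is Paulin's theorem: if $G$ is hyperbolic with $\mathrm{Out}(G)$ infinite, then an infinite sequence of pairwise non-conjugate automorphisms yields, via the construction of \cite{PaulinOut}, a nontrivial small action of $G$ on a real tree (one of the asymptotic cones of $G$); because $G$ is hyperbolic the arc-stabilisers of this action are virtually cyclic, so Rips' theory as developed in \cite{GuirardelTrees} converts it into a nontrivial splitting of $G$ over a virtually cyclic subgroup $C$.

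Granting this, I would argue by contradiction. Suppose the JSJ decomposition of $G$ is trivial, i.e. its graph of groups is a single vertex carrying $G$, with no edge. Since $G$ is one-ended it has exactly one end, whereas every virtually free group has $0$, $2$, or infinitely many ends; hence $G$ is not virtually free, so the unique vertex-group $G$ is of type $2$. Consequently a trivial JSJ would have no edge-group and no vertex-group of type $1$. But the final assertion of Theorem \ref{thm:JSJ} requires the virtually cyclic subgroup $C$, over which $G$ splits, to be conjugate into an edge-group or a vertex-group of type $1$ --- of which there are none. This contradiction shows that the JSJ decomposition is nontrivial.

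For the stated consequence, recall that $G$ acts minimally on its JSJ tree $T$, and that this action is now known to be nontrivial, so $T$ is not reduced to a point; in particular $G$ fixes no vertex of $T$, since a global fixed vertex would constitute the whole minimal subtree. By Serre's lemma, a finitely generated group all of whose elements act elliptically on a tree must fix a vertex; applied to the finitely generated (indeed hyperbolic) group $G$, this forces some element to act as a loxodromic isometry of $T$, as claimed.

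The only genuinely hard step is the first one, namely extracting a virtually cyclic splitting from $\mathrm{Out}(G)$ being infinite; everything afterwards is a short combinatorial deduction from Theorem \ref{thm:JSJ} together with standard Bass--Serre theory. I would therefore simply cite Paulin's theorem here, observing that the same circle of ideas (Paulin's construction followed by Rips' machine) is precisely what Sections \ref{section:Paulin} and \ref{section:main} develop in a relative form, so no further machinery is required for this lemma.
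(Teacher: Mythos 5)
Your argument is correct and is essentially the paper's own proof: the paper likewise invokes Paulin's theorem (cited there as \cite{OutHyp}) to produce a virtually cyclic splitting from the infiniteness of $\mathrm{Out}(G)$, rules out a single type-1 vertex by one-endedness and a single type-2 vertex by the last clause of Theorem \ref{thm:JSJ}, and concludes with minimality plus Serre's lemma. The only cosmetic difference is that the paper explicitly notes that edge-groups embed into type-1 vertex-groups, whereas you handle both cases at once by observing that a trivial JSJ has neither.
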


\begin{proof}
The first remark is that, as a consequence of \cite[Theorem 5.28]{BowditchJSJ}, the action $G \curvearrowright T$ is minimal. Consequently, in order to deduce that the JSJ decomposition is not trivial, it is sufficient to verify that it cannot be reduced to a single vertex-group under our assumptions. 

\medskip \noindent
First of all, because our group is one-ended, the JSJ decomposition cannot be reduced to a single vertex-group of type 1. Next, according to \cite{OutHyp}, as $\mathrm{Out}(G)$ is infinite necessarily $G$ has to split over a virtually cyclic subgroup. But such a subgroup must be included into a vertex-group of type 1 of the JSJ decomposition (up to conjugation). Indeed, this subgroup can be conjugate into a vertex-group of type 1 or an edge-group; but no two vertex-groups of type 2 are adjacent, so an edge-group always embeds into a vertex-group of type 1. It follows that the JSJ decomposition cannot be reduced to a single vertex-group of type~2.

\medskip \noindent
Thus, we have proved the first assertion of our lemma. The second assertion follows from the fact that a finitely generated group acting on a tree either fixes a point or contains a loxodromic isometry (see for instance \cite[Corollary 3 page 65, Proposition 25 page~63]{MR1954121}).
\end{proof}

\noindent
In our argument, the following observation will be fundamental:

\begin{prop}\label{prop:WPDinG}
Let $G$ be a one-ended hyperbolic group which is not virtually a surface group. Any element $g \in G$ defining a loxodromic isometry of the JSJ tree $T$ must be WPD with respect to $G \curvearrowright T$. 
\end{prop}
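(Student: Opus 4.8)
The plan is to reduce the WPD condition to a finiteness statement about pointwise stabilisers of segments of the axis of $g$, and then to exploit the clash between the loxodromic behaviour of $g$ and the two-ended edge-groups. Since $T$ is a simplicial tree with integer edge-lengths, I would fix a vertex $x$ on the axis of $g$ and take $\varepsilon < 1$ in the definition of WPD; the conditions $d(x, hx) \leq \varepsilon$ and $d(g^N x, h g^N x) \leq \varepsilon$ then force $hx = x$ and $h g^N x = g^N x$. As $h$ fixes both endpoints of $[x, g^N x]$, it fixes this segment pointwise. Hence establishing WPD for $g$ amounts to proving that the pointwise stabiliser $P_N := \mathrm{Fix}([x, g^N x])$ is finite for some $N$.

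Writing $e_0, \dots, e_{N-1}$ for the consecutive edges of $[x, g^N x]$, we have $P_N = \bigcap_{i=0}^{N-1} \mathrm{stab}(e_i)$, an intersection of edge-groups, each of which is two-ended by Theorem \ref{thm:JSJ}. I would argue by contradiction, assuming $P_N$ is infinite for every $N$. The key elementary fact is that every infinite subgroup of a two-ended group has finite index; hence, for each $N$, the infinite group $P_N$ is commensurable with all of $\mathrm{stab}(e_0), \dots, \mathrm{stab}(e_{N-1})$. In a hyperbolic group, commensurable two-ended subgroups have the same pair of endpoints in $\partial G$, so all edge-groups met by the axis of $g$ share one common endpoint pair $\{\xi^+, \xi^-\} \subset \partial G$.

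The contradiction comes from the dynamics of $g$. Since $g$ translates the axis, it sends $e_i$ to $e_{i+\ell}$, where $\ell = \ell_T(g) > 0$ is the translation length; thus $\mathrm{stab}(e_\ell) = g\, \mathrm{stab}(e_0)\, g^{-1}$ has endpoint pair $g \cdot \{\xi^+, \xi^-\}$, which must again equal $\{\xi^+, \xi^-\}$. Therefore $g$ preserves $\{\xi^+, \xi^-\}$ and lies in the maximal two-ended subgroup $E(\xi^+, \xi^-) := \mathrm{Stab}_G(\{\xi^+, \xi^-\})$, which also contains $\mathrm{stab}(e_0)$. But $g$ acts loxodromically on $T$, whereas every element of $\mathrm{stab}(e_0)$ fixes $e_0$ and so acts elliptically; comparing powers inside the finite-index infinite cyclic subgroup of the virtually cyclic group $E(\xi^+, \xi^-)$, one obtains a single nonzero power that is simultaneously loxodromic and elliptic on $T$, which is absurd. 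Hence $P_N$ is finite for $N$ large enough, and $g$ is WPD.

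I expect the only delicate point to be the bookkeeping ensuring that the endpoint pair $\{\xi^+, \xi^-\}$ attached to the various segments is genuinely the same along the whole axis, rather than depending a priori on $N$; this follows from the nesting $P_{N+1} \subseteq P_N$, which keeps all the relevant infinite subgroups commensurable with the fixed subgroup $\mathrm{stab}(e_0)$. Everything else rests on standard facts about two-ended subgroups of hyperbolic groups, together with the elementary observation that a tree isometry and all of its nonzero powers share the same elliptic-or-loxodromic type.
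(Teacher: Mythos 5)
Your proof is correct and follows essentially the same route as the paper: an infinite common stabiliser along the axis makes the two-ended edge-groups met by the axis pairwise commensurable and thereby forces a nonzero power of $g$ into a two-ended subgroup containing $\mathrm{stab}(e_0)$, contradicting loxodromicity. The paper packages this more briefly -- if $\mathrm{stab}(e)\cap\mathrm{stab}(ge)$ is infinite then $g$ lies in the commensurator of the virtually cyclic subgroup $\mathrm{stab}(e)$, which contains $\mathrm{stab}(e)$ with finite index, so $g^s\in\mathrm{stab}(e)$ for some $s\geq 1$ -- and your detour through the endpoint pair $\{\xi^+,\xi^-\}\subset\partial G$ is just an unpacking of that same commensurator fact.
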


\noindent
Recall that, given a group $G$ acting on metric space $X$ by isometries, an element $g \in G$ is WPD if, for every $x \in X$ and every $\epsilon>0$, there exists some $N \geq 1$ such that the set
$$\{ h \in G \mid d(x,hx) \leq \epsilon \ \text{and} \ d(g^Nx,hg^Nx) \leq \epsilon\}$$
is finite. WPD elements are fundamental in the theory of acylindrically hyperbolic groups since we can define acylindrically hyperbolic groups as non-virtually cyclic groups admitting actions on hyperbolic spaces with at least one WPD element \cite{OsinAcyl}. 

\medskip \noindent
In the case of a tree, WPD elements can be characterised in an easier way. In the sequel, we will often use the following statement without mentioning it.

\begin{lemma}
Let $G$ be a group acting on a simplicial tree $T$, and $g \in G$ a loxodromic isometry. Then $g$ is WPD if and only if there exist two points $x,y \in \mathrm{axis}(g)$ such that the intersection $\mathrm{stab}(x) \cap \mathrm{stab}(y)$ is finite. 
\end{lemma}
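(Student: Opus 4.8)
The plan is to prove both implications, after first reducing to a single, conveniently chosen basepoint. Indeed, the WPD condition is independent of the basepoint: if $d(p,p')=D$, then any $h$ with $d(p',hp')\leq \epsilon$ and $d(g^Np',hg^Np')\leq \epsilon$ also satisfies $d(p,hp)\leq \epsilon+2D$ and $d(g^Np,hg^Np)\leq \epsilon+2D$ (using that $g^N$ is an isometry), so checking the defining condition at one point for every $\epsilon$ suffices. I will therefore work with a vertex $x_0$ lying on $\mathrm{axis}(g)$. For the direct implication, I fix $\epsilon<1$; since $x_0$ and $g^Nx_0$ are vertices of a simplicial tree, the set in the definition of WPD is exactly $\mathrm{stab}(x_0)\cap\mathrm{stab}(g^Nx_0)$, and the point $g^Nx_0$ again lies on $\mathrm{axis}(g)$, so WPD immediately produces two points on the axis with finite stabiliser intersection.

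For the converse, suppose $\mathrm{stab}(x_0)\cap\mathrm{stab}(y_0)$ is finite for some $x_0,y_0\in\mathrm{axis}(g)$. Enlarging the segment $[x_0,y_0]$ to one between two vertices only shrinks this intersection, since in a tree an isometry fixing two points fixes the geodesic between them pointwise; so I may assume $x_0,y_0$ are vertices, and I set $F:=\mathrm{stab}(x_0)\cap\mathrm{stab}(y_0)$ and $L:=\mathrm{axis}(g)$, on which $g$ translates by some $\ell>0$. Given $\epsilon$, I take $N$ large and put $\sigma:=[x_0,g^Nx_0]\subseteq L$. The key geometric step is to show that every $h$ in the set $S_N$ of the definition carries the $2\epsilon$-interior $J_0$ of $\sigma$ back into $L$: as $hx_0$ lies within $\epsilon$ of $x_0$ and $hg^Nx_0$ within $\epsilon$ of $g^Nx_0$, projection onto $L$ (a $1$-Lipschitz retraction) shows that the geodesic $h\sigma$ runs along $L$ except near its endpoints, so $h$ restricts on $J_0$ to an orientation-preserving translation of $L$ by an amount $t(h)$ with $|t(h)|\leq 3\epsilon$. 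Since $T$ is simplicial and $h$ permutes vertices, $t(h)$ is an integer, hence takes only finitely many values.

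It then remains to bound, for each fixed value $t$, the number of $h\in S_N$ with $t(h)=t$. If $h_1,h_2\in S_N$ share the same $t$, then $h_2^{-1}h_1$ translates $J_0$ by $t-t=0$, i.e. it fixes $J_0$ pointwise. Choosing $N$ large enough that $J_0$ contains a translate $g^k[x_0,y_0]$ (possible since $|J_0|\to\infty$ while the translates of $[x_0,y_0]$ tile $L$ with period $\ell$), this forces $h_2^{-1}h_1\in\mathrm{stab}(g^kx_0)\cap\mathrm{stab}(g^ky_0)=g^kFg^{-k}$, a fixed finite set. Thus each of the finitely many values of $t$ accounts for at most $|F|$ elements, so $S_N$ is finite and $g$ is WPD. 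I expect the main obstacle to be exactly this passage from the coarse, approximate fellow-travelling of $\sigma$ and $h\sigma$ to an honest pointwise-fixed subsegment: fellow-travelling by itself yields no fixed points, and it is the combination of the translation structure of $g$ along $L$ with the difference trick $h\mapsto h_2^{-1}h_1$ that converts the metric estimate into the algebraic finiteness supplied by $F$.
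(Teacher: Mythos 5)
Your argument is correct, and it is genuinely more self-contained than what the paper does: the paper does not prove this lemma at all, but instead cites \cite[Corollary 4.3]{MinasyanOsin} for the implication ``finite two-point stabiliser $\Rightarrow$ WPD'' and dismisses the converse as immediate from the definition (which matches your short first direction, where $\epsilon<1$ forces the defining set to be exactly $\mathrm{stab}(x_0)\cap\mathrm{stab}(g^Nx_0)$). Your proof of the substantive direction is sound: the basepoint-independence reduction is valid; the nearest-point projection onto $L$ does show that $h\sigma$ coincides with $\sigma$ away from $\epsilon$-neighbourhoods of the endpoints, so that $h$ acts on the deep interior $J_0$ as a shift by a constant $t(h)$ (in fact $|t(h)|\leq\epsilon$, so your bound $3\epsilon$ is safely generous), which is an integer because vertices go to vertices; and the difference trick $h_2^{-1}h_1$ correctly converts ``same shift'' into ``fixes $J_0$ pointwise'', hence membership in the fixed finite conjugate $g^kFg^{-k}$ once $N$ is large enough that $J_0$ contains $g^k[x_0,y_0]$. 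The only points worth making explicit if you write this up are (i) that the action is simplicial, so that the axis is a combinatorial line and you may indeed move $x_0,y_0$ outward to vertices of the axis, and (ii) that $N\ell$ must exceed $4\epsilon+\ell+d(x_0,y_0)$ so that $J_0$ is long enough for the last step; neither is a gap. What your approach buys is a fully elementary, quantitative proof from scratch; what the paper's citation buys is brevity and the slightly more general framework of \cite{MinasyanOsin}, where this criterion is packaged together with acylindricity statements for actions on trees.
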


\noindent
The fact that the latter condition is sufficient to deduce that an element is WPD is proved in \cite[Corollary 4.3]{MinasyanOsin}. The converse is an immediate consequence of the definition. Now, let us go back to Proposition \ref{prop:WPDinG}.

\begin{proof}[Proof of Proposition \ref{prop:WPDinG}.]
Let $T$ denote the JSJ tree of $G$. Suppose that $g \in G$ is loxodromic and fix an edge $e$ in its axis. If $\mathrm{stab}(e) \cap \mathrm{stab}(ge)$ is infinite, then $g$ belongs to the commensurator of $\mathrm{stab}(e)$. But, since $\mathrm{stab}(e)$ is virtually cyclic, it has finite-index in its commensurator, so there must exist some power $s \geq 1$ such that $g^s$ belongs to $\mathrm{stab}(e)$. This contradicts the fact that $g$ is a loxodromic isometry of $T$, concluding the proof of our proposition. 
\end{proof}

\section{Inner automorphisms as generalised loxodromic elements}\label{section:WPD}

\noindent
We saw in the previous section that the automorphism group $\mathrm{Aut}(G)$ of our hyperbolic group $G$ acts naturally on the JSJ tree of $G$. A natural strategy to prove that $\mathrm{Aut}(G)$ is acylindrically hyperbolic is to show that it contains WPD elements with respect to this action. The most natural attempt in this direction is to start with a WPD element $g$ of $G$ and to look at the corresponding inner automorphism $\iota(g)$. The next statement provides a sufficient criterion to determine when such an inner automorphism defines a WPD element with respect to the action of $\mathrm{Aut}(G)$ on the JSJ tree. 

\begin{prop}\label{prop:ifnotWPD}
Let $G$ be a one-ended hyperbolic group which is not virtually a surface group. Let $T$ denote the JSJ tree of $G$. Suppose that $g \in G$ is WPD with respect to $G \curvearrowright T$ but that $\iota(g^s)$ is not WPD with respect to $\mathrm{Aut}(G) \curvearrowright T$ for every $s \geq 1$. Then there exist $s \geq 1$ and infinitely many pairwise non-conjugate automorphisms $\varphi_1, \varphi_2, \ldots \in \mathrm{Aut}(G)$ such that $\varphi_i(g^s)= \varphi_j(g^s)$ for every $i,j \geq 1$. 
\end{prop}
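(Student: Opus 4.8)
The plan is to exploit the failure of WPD for $\iota(g)$ to produce an infinite pointwise stabiliser of a long segment of $\mathrm{axis}(g)$, and then to show that, on this infinite family, the value taken on $g$ is constrained to a \emph{finite} set; a pigeonhole argument then yields infinitely many automorphisms agreeing on $g$. The nontriviality — that these automorphisms are pairwise non-conjugate, i.e. induce pairwise distinct elements of $\mathrm{Out}(G)$ — comes from the fact that inner automorphisms contribute only a finite part of the stabiliser. I will establish the conclusion with $s=1$: since $\iota(g)$ and $\iota(g^s)$ induce isometries of $T$ with the same axis $L:=\mathrm{axis}(g)$, the failure of WPD is insensitive to the power, so the hypothesis for $s=1$ is all that is used.

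First I turn the failure of WPD into an infinite segment stabiliser. Recall $g$ is loxodromic on $T$; as $\iota(g)$ induces the same isometry, it too is loxodromic with axis $L$. By the tree characterisation of WPD recalled above, $\iota(g)$ fails to be WPD exactly when, for \emph{every} pair $x,y\in L$, the intersection $\mathrm{stab}_{\mathrm{Aut}(G)}(x)\cap\mathrm{stab}_{\mathrm{Aut}(G)}(y)$ is infinite. Fixing a vertex $o\in L$ and a large integer $N$, and applying this to $x=o$ and $y=g^{N}o$, any element fixing both fixes the whole segment $J:=[o,g^{N}o]$ pointwise; hence $\mathrm{Fix}_{\mathrm{Aut}(G)}(J)$ is infinite. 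I take $N$ large enough that the subsegment $J':=J\cap g^{-1}J=[o,g^{N-1}o]$ contains an edge $e$ together with $ge$.

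The crux is to control $\psi(g)$ for $\psi\in\mathrm{Fix}_{\mathrm{Aut}(G)}(J)$. Writing elements of $G$ and of $\mathrm{Aut}(G)$ for the isometries they induce on $T$, the relation $\psi\iota(g)\psi^{-1}=\iota(\psi(g))$ becomes $\psi\,g\,\psi^{-1}=\psi(g)$. For $x\in J'$ one has $x\in J$ and $gx\in J$, both fixed by the appropriate factor, so $\psi(g)x=\psi g\psi^{-1}x=\psi(gx)=gx$; thus $\psi(g)$ and $g$ act identically on $J'$, and $g^{-1}\psi(g)\in G$ fixes $J'$ pointwise. Here the hypothesis that $g$ is WPD with respect to $G\curvearrowright T$ (Proposition \ref{prop:WPDinG}) is consumed: the pointwise stabiliser $\mathrm{Fix}_{G}(J')$ is contained in $\mathrm{stab}_{G}(e)\cap\mathrm{stab}_{G}(ge)$ and is therefore finite. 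Consequently $\psi\mapsto\psi(g)$ takes values in the finite set $g\cdot\mathrm{Fix}_{G}(J')$. The kernel of the restriction $\mathrm{Fix}_{\mathrm{Aut}(G)}(J)\to\mathrm{Out}(G)$ is $\mathrm{Fix}_{\mathrm{Aut}(G)}(J)\cap\mathrm{Inn}(G)=\iota(\mathrm{Fix}_{G}(J))$, again finite, so the image of $\mathrm{Fix}_{\mathrm{Aut}(G)}(J)$ in $\mathrm{Out}(G)$ is infinite. Since this image is the finite union of the images of the fibres of $\psi\mapsto\psi(g)$, some fibre $S_{c}=\{\psi:\psi(g)=c\}$ has infinite image in $\mathrm{Out}(G)$; choosing $\varphi_{1},\varphi_{2},\ldots\in S_{c}$ representing pairwise distinct elements of $\mathrm{Out}(G)$ gives infinitely many pairwise non-conjugate automorphisms with $\varphi_{i}(g)=c=\varphi_{j}(g)$, as required.

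The main obstacle is the finiteness statement in the third paragraph: recognising that the failure of WPD in $\mathrm{Aut}(G)\curvearrowright T$, which a priori only supplies infinitely many segment-fixing automorphisms, forces those automorphisms to realise only finitely many values on $g$. A secondary subtlety is the correct reading of ``pairwise non-conjugate''. The inner powers $\iota(g^{k})$ already fix $g$ and have distinct translation lengths on $T$, hence are pairwise non-conjugate in $\mathrm{Aut}(G)$, yet they are inner and so useless for Paulin's construction; the content that must be extracted is therefore pairwise distinctness in $\mathrm{Out}(G)$, and this is precisely what the finiteness of $\mathrm{Fix}_{\mathrm{Aut}(G)}(J)\cap\mathrm{Inn}(G)$ guarantees.
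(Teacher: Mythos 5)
Your argument is correct and reaches the desired conclusion --- in fact a slightly stronger one, with $s=1$ --- but the mechanism by which you pin down $\varphi(g)$ differs from the paper's. The paper fixes an edge $e$ on the axis with stabiliser $Z$, uses the finite orbit $N(Z)\cdot e$ and the WPD property of $g$ to find $s$ with $N(Z)\cap g^sN(Z)g^{-s}$ finite, and then shows by a purely algebraic computation (from $\varphi(Z)=Z$, $\varphi(g^sZg^{-s})=g^sZg^{-s}$ and $\varphi(g^{2s}Zg^{-2s})=g^{2s}Zg^{-2s}$) that every $\varphi$ in the infinite stabiliser $\mathrm{stab}_{\mathrm{Aut}(G)}(e)\cap\mathrm{stab}_{\mathrm{Aut}(G)}(g^{2s}e)$ satisfies $\varphi(g^s)\in g^s\left(N(Z)\cap g^sN(Z)g^{-s}\right)$. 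You instead work with the pointwise fixator of a long segment $J$ of the axis and observe directly that $\psi(g)$ and $g$ induce the same isometry on $J\cap g^{-1}J$, so that $g^{-1}\psi(g)$ lies in the pointwise $G$-fixator of that subsegment, which is finite. This is cleaner: it avoids the normaliser bookkeeping, does not need to pass to $g^{2s}$, and yields the conclusion for the exponent $s=1$. Both proofs then finish identically, by pigeonholing the finitely many possible values of $\varphi(g^s)$ against the infinite image of the relevant stabiliser in $\mathrm{Out}(G)$, with the same (correct) reading of ``pairwise non-conjugate'' as ``pairwise distinct in $\mathrm{Out}(G)$''.

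One small point of hygiene. You justify the finiteness of $\mathrm{Fix}_G(J')$ by saying it sits inside $\mathrm{stab}_G(e)\cap\mathrm{stab}_G(ge)$ and citing the WPD property of $g$. The tree characterisation of WPD only guarantees that \emph{some} pair of points on the axis has finite common stabiliser, not that this holds for an arbitrary edge $e$ and its translate $ge$; the containment $\mathrm{stab}_G(x)\cap\mathrm{stab}_G(y)\subseteq\mathrm{stab}_G(e)\cap\mathrm{stab}_G(ge)$ when $e,ge\subseteq [x,y]$ goes the wrong way for your purpose. Either take $N$ large enough that $J'$ contains a witnessing pair $x,y$ --- which is free, since $N$ is yours to choose, and then $\mathrm{Fix}_G(J')\subseteq\mathrm{stab}_G(x)\cap\mathrm{stab}_G(y)$ is finite using only WPD --- or invoke the actual argument of Proposition \ref{prop:WPDinG}: edge stabilisers of the JSJ tree are virtually cyclic, so if $\mathrm{stab}_G(e)\cap\mathrm{stab}_G(ge)$ were infinite a power of $g$ would lie in $\mathrm{stab}_G(e)$, contradicting loxodromicity. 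With either repair the proof is complete.
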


\begin{proof}
Fix some element $g \in G$ which is WPD with respect to $G \curvearrowright T$, and let $\gamma \subset T$ denote its axis. Fix an edge $e \subset \gamma$ and let $Z$ denote its $G$-stabiliser. Since $Z$ has finite index in its normaliser $N(Z)$, the orbit $N(Z) \cdot e$ must be finite. Let $C$ denote the convex hull of this orbit; it is a finite subtree. It follows from the fact that $g$ is WPD with respect to $G \curvearrowright T$ that there exists some $s \geq 1$ such that $\mathrm{stab}_G(C) \cap \mathrm{stab}_G(g^sC)$ is finite. As
$$N(Z) \cap g^sN(Z)g^{-s} \subset \mathrm{stab}_G(C) \cap \mathrm{stab}_G(g^sC),$$
we deduce that $N(Z) \cap g^sN(Z)g^{-s}$ must be finite. Now, we know by assumption that $\iota(g^s)$ is not WPD with respect to $\mathrm{Aut}(G) \curvearrowright T$, so the intersection
$$P :=\mathrm{stab}_{\mathrm{Aut}(G)}(e) \cap \mathrm{stab}_{\mathrm{Aut}(G)}(g^{2s}e)$$
must be infinite. First, since
$$P \cap \mathrm{Inn}(G) = \{ \iota(h) \mid h \in \mathrm{stab}_G(e) \cap \mathrm{stab}_G(g^{2s}e) \} \subset \{ \iota(h) \mid h \in \mathrm{stab}_G(e) \cap \mathrm{stab}_G(g^{s}e) \},$$
we notice that $P \cap \mathrm{Inn}(G)$ is finite, so that $P$ has infinite image in $\mathrm{Out}(G)$. Fix a sequence of automorphisms $\varphi_1, \varphi_2, \ldots \in P$ which are pairwise non-conjugate. 

\medskip \noindent
Next, fix some $\varphi \in P$. Because $\varphi$ belongs to $\mathrm{stab}_{\mathrm{Aut}(G)}(e)$, we know that $\varphi(Z)=Z$; because $\varphi$ belongs to $\mathrm{stab}_{\mathrm{Aut}(G)}(g^se)$, we know that
$$g^sZg^{-s}  =  \varphi(g^sZg^{-s})= \varphi(g^s) \varphi(Z) \varphi(g^{-s}) = \varphi(g^s) Z \varphi(g^{-s}),$$
hence $\varphi(g^s)=g^sn$ for some $n \in N(Z)$; and because $\varphi$ belongs to $\mathrm{stab}_{\mathrm{Aut}(G)}(g^{2s}e)$, we know that
$$\begin{array}{lcl} g^{2s}Zg^{-2s} & = & \varphi(g^{2s}Zg^{-2s}) = \varphi(g^s) \varphi(g^s)\varphi(Z) \varphi(g^s)^{-1} \varphi(g^s)^{-1} \\ \\ & = & g^sng^snZn^{-1}g^{-s}n^{-1}g^{-s}= g^sng^sZg^{-s}n^{-1}g^{-s}, \end{array}$$
hence $n \in g^sN(Z)g^{-s}$. Thus, we have proved that
$$P \subset \{ \varphi \in \mathrm{Aut}(G) \mid \varphi(g^s) \in g^s \left( N(Z) \cap g^s N(Z)g^{-s} \right) \}.$$
But we know that the intersection $N(Z) \cap g^s N(Z)g^{-s}$ is finite, so we can find a subsequence in $\varphi_1, \varphi_2, \ldots \in P$ all of whose automorphisms send $g^s$ to the same element.
\end{proof}

\section{Relative splittings from Paulin's construction}\label{section:Paulin}

\noindent
In this section, we are interested in understanding what happens when Proposition \ref{prop:ifnotWPD} applies, or more precisely, when there exists an element of the group which is fixed by infinitely many pairwise non-conjugate automorphisms. Our main result in this direction is the following: 

\begin{prop}\label{prop:RelativeSplitting}
Let $G$ be a hyperbolic group and $g \in G$ an infinite-order element. Suppose that there exist infinitely many pairwise non-conjugate automorphisms $\varphi_1, \varphi_2, \ldots \in \mathrm{Aut}(G)$ such that $\varphi_i(g)= \varphi_j(g)$ for every $i,j \geq 1$. Then $G$ splits relatively to $\langle g \rangle$ over a virtually cyclic subgroup.
\end{prop}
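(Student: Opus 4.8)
The plan is to feed the sequence of non-conjugate automorphisms $\varphi_1, \varphi_2, \ldots$ into Paulin's construction in order to produce a nontrivial isometric action of $G$ on a real tree, and then to invoke the Rips machine to extract from this action a splitting of $G$ over a virtually cyclic subgroup, relative to $\langle g \rangle$. The starting point is the following standard dichotomy. Fix a finite generating set $S$ of $G$ and the corresponding word metric. Since the $\varphi_i$ are pairwise non-conjugate in $\mathrm{Aut}(G)$ (equivalently, pairwise distinct modulo $\mathrm{Inn}(G)$), the quantities
\[
\lambda_i := \min_{a \in G} \ \max_{s \in S} \ |a \varphi_i(s) a^{-1}|_S
\]
cannot stay bounded: boundedness would force infinitely many of the $\varphi_i$ to agree with a fixed automorphism up to inner automorphism and up to the action of the finitely many short automorphisms, contradicting pairwise non-conjugacy. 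Hence, after passing to a subsequence and replacing each $\varphi_i$ by $\iota(a_i) \circ \varphi_i$ for a suitable $a_i \in G$ realising the minimum, we may assume $\lambda_i \to \infty$. I would carry out this normalisation step first and record that, crucially, the normalisation is by an \emph{inner} automorphism, so the hypothesis $\varphi_i(g) = \varphi_j(g)$ is \emph{not} preserved verbatim — what is preserved is that the elements $\varphi_i(g)$ all lie in a single conjugacy class, in fact that $a_i \varphi_i(g) a_i^{-1}$ are the conjugates by $a_i$ of one fixed element.

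The second step is Paulin's construction proper (following \cite{PaulinOut}), which I would phrase through asymptotic cones as the introduction suggests. Using the actions of $G$ on its own Cayley graph twisted by $\varphi_i$ and rescaling by $1/\lambda_i$, one obtains via an ultralimit an isometric action of $G$ on an asymptotic cone of $G$; since $G$ is hyperbolic, this asymptotic cone is a real tree $\mathcal{T}$. The divergence $\lambda_i \to \infty$ guarantees that the action $G \curvearrowright \mathcal{T}$ has no global fixed point, so it is nontrivial; and because $G$ is hyperbolic (hence finitely presented and with no degenerate phenomena), one checks that the action has virtually cyclic arc stabilisers. The key additional observation, and the point where the hypothesis $\varphi_i(g) = \varphi_j(g)$ is used, is that $g$ must act \emph{elliptically} on $\mathcal{T}$: the translation lengths of $g$ under the twisted actions are controlled by $|\varphi_i(g)|_S$ conjugated appropriately, and since all $\varphi_i(g)$ coincide before normalisation, the contribution of $g$ to the metric grows strictly slower than $\lambda_i$, so after rescaling the translation length of $g$ in the limit is $0$. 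I would make this precise by computing $|a_i \varphi_i(g^k) a_i^{-1}|_S / \lambda_i$ and showing it tends to $0$ for each fixed $k$, which yields ellipticity of $g$ (its limiting translation length vanishes).

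The third step is to run the Rips machine on the action $G \curvearrowright \mathcal{T}$, as exposed in \cite{GuirardelTrees}. The output of Rips' theory for a nontrivial, minimal, stable action of a finitely presented group on a real tree with virtually cyclic arc stabilisers is a decomposition of the action into pieces of surface, axial, and simplicial type, from which one extracts an honest graph-of-groups splitting of $G$ over subgroups commensurable with the arc stabilisers, hence over virtually cyclic subgroups. Because $g$ is elliptic in $\mathcal{T}$, it is elliptic in the resulting splitting, i.e. conjugate into a vertex group; this is exactly the statement that the splitting is \emph{relative to} $\langle g \rangle$. I would need to verify the stability hypothesis required by the Rips machine, which in the hyperbolic setting is automatic, and to argue that the splitting is genuinely nontrivial — a point that follows from the absence of a global fixed point together with minimality.

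The main obstacle I anticipate is the bookkeeping around the normalisation in the first step. The cleanest form of Paulin's theorem produces a splitting relative to the elliptic elements, but the ellipticity of $g$ must survive the inner normalisation $\varphi_i \mapsto \iota(a_i)\circ\varphi_i$, and it is precisely the hypothesis that the $\varphi_i(g)$ are all \emph{equal} (not merely conjugate) that pins down the length estimate forcing the limiting translation length of $g$ to be zero. Care is needed to ensure that this equality is exploited before the conjugating elements $a_i$ are absorbed, and that one does not accidentally lose ellipticity of $g$ by conjugating it far out in the tree. Establishing the quantitative length bound $|a_i \varphi_i(g) a_i^{-1}|_S = o(\lambda_i)$ is the technical crux; once it is in hand, the passage to the real tree and the application of Rips' theory are standard.
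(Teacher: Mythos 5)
Your overall route coincides with the paper's: normalise the $\varphi_i$, run Paulin's construction to obtain a fixed-point-free isometric action of $G$ on an asymptotic cone of $G$ (a real tree), show that $g$ is elliptic there, and feed the action into Guirardel's relative version of the Rips machine to extract a splitting over a virtually cyclic subgroup relative to $\langle g \rangle$. The surrounding steps (divergence of the $\lambda_i$ from pairwise non-conjugacy via the pigeonhole principle, virtually cyclic arc stabilisers, the chain and non-self-conjugation conditions needed for the Rips machine, the tree not being a line) all match the paper in outline and are fine.

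The step that would fail is precisely the one you single out as the technical crux: the claim that $|a_i \varphi_i(g^k) a_i^{-1}|_S = o(\lambda_i)$. Writing $h = \varphi_i(g)$ (a single element independent of $i$, by hypothesis) and $o_i = a_i^{-1}$, that quantity equals $d_S(o_i, h^k o_i)$, i.e.\ the displacement of the \emph{basepoint} of the cone under $g^k$. There is no reason for this to be $o(\lambda_i)$: the general Paulin estimate only gives $d_S(o_i,h^k o_i) = O(\lambda_i)$, and in the limit tree $g$ may perfectly well move the basepoint a positive distance, since its fixed-point set need not contain the basepoint. What does tend to zero after rescaling is the \emph{minimal} displacement $\min_x d_S(x, h^k x) = \|h^k\|_S$, which is a constant independent of $i$ precisely because the elements $\varphi_i(g)$ are all equal to the single element $h$ (this is the only place the hypothesis is used). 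To convert this into ellipticity of $g$ you must either invoke semisimplicity of isometries of complete real trees (infimal displacement $0$ forces a fixed point), or, as the paper does, exhibit the fixed point explicitly: let $p_i$ be a projection of $o_i$ onto a quasi-axis $\gamma$ of $h$; the hyperbolicity inequality $d_S(o_i, h^k o_i) \geq k\|h\| + 2 d_S(o_i,p_i) - D$ together with the $\omega$-boundedness of $d_S(o_i,h^k o_i)/\lambda_i$ shows that $(p_i)$ defines a point of the cone, and that point is moved by $g^k$ a distance at most $\lim_\omega (k\|h\|+D)/\lambda_i = 0$. With this correction your argument closes; as written, the estimate you propose to establish is false in general, so the ellipticity of $g$ is not proved.
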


\noindent
Recall that a group $G$ splits relatively to a subgroup $H$ if $H$ can be conjugate into one of the factors of the splitting. Proposition \ref{prop:RelativeSplitting} is clearly inspired by \cite[Corollary 1.3]{OutHyp}, and similarly our proof is based on Paulin's construction \cite{PaulinOut}. First of all, we need to recall basic definitions related to asymptotic cones of groups.

\paragraph{Asymptotic cones.} An \emph{ultrafilter} $\omega$ over a set $S$ is a collection of subsets of $S$ satisfying the following conditions:
\begin{itemize}
	\item $\emptyset \notin \omega$ and $S \in \omega$; 
	\item for every $A,B \in \omega$, $A \cap B \in \omega$;
	\item for every $A \subset S$, either $A\in \omega$ or $A^c \in \omega$.
\end{itemize}
Basically, an ultrafilter may be thought of as a labelling of the subsets of $S$ as ``small'' (if they do not belong to $\omega$) or ``big'' (if they belong to $\omega$). More formally, notice that the map
$$\left\{ \begin{array}{ccc} \mathfrak{P}(S) & \to & \{ 0,1\} \\ A & \mapsto & \left\{ \begin{array}{cl} 0 & \text{if $A \notin \omega$} \\ 1 & \text{if $A \in \omega$} \end{array} \right. \end{array} \right.$$
defines a finitely additive measure on $S$. 

\medskip \noindent
The easiest example of an ultrafilter is the following. Fixing some $s \in S$, set $\omega= \{ A \subset S \mid s \in A \}$. Such an ultrafilter is called \emph{principal}. The existence of non-principal ultrafilters is assured by Zorn's lemma; see \cite[Section 3.1]{KapovichLeebCones} for a brief explanation.

\medskip \noindent
Now, fix a metric space $(X,d)$, a non-principal ultrafilter $\omega$ over $\mathbb{N}$, a \emph{scaling sequence} $\epsilon= (\epsilon_n)$ satisfying $\epsilon_n \to 0$, and a sequence of basepoints $o=(o_n) \in X^{\mathbb{N}}$. A sequence $(r_n) \in \mathbb{R}^{\mathbb{N}}$ is \emph{$\omega$-bounded} if there exists some $M \geq 0$ such that $\{ n \in \mathbb{N} \mid |r_n| \leq M \} \in \omega$ (i.e., if $|r_n| \leq M$ for ``$\omega$-almost all $n$''). Set
$$B(X,\epsilon,o) = \{ (x_n) \in X^{\mathbb{N}} \mid \text{$(\epsilon_n \cdot d(x_n,o_n))$ is $\omega$-bounded} \}.$$
We may define a pseudo-distance on $B(X,\epsilon,o)$ as follows. First, we say that a sequence $(r_n) \in \mathbb{R}^{\mathbb{N}}$ \emph{$\omega$-converges} to a real $r \in \mathbb{R}$ if, for every $\epsilon>0$, $\{ n \in \mathbb{N} \mid |r_n-r| \leq \epsilon \} \in \omega$. If so, we write $r= \lim\limits_\omega r_n$. It is worth noticing that an $\omega$-bounded sequence of $\mathbb{R}^\mathbb{N}$ always $\omega$-converges; see \cite[Section 3.1]{KapovichLeebCones} for more details. Then, our pseudo-distance is
$$\left\{ \begin{array}{ccc} B(X,\epsilon,o)^2 & \to & [0,+ \infty) \\ (x,y) & \mapsto & \lim\limits_\omega \epsilon_n \cdot d(x_n,y_n) \end{array} \right..$$
Notice that the previous $\omega$-limit always exists since the sequence under consideration is $\omega$-bounded. 

\begin{definition}
The \emph{asymptotic cone} $\mathrm{Cone}_\omega(X,\epsilon,o)$ of $X$ is the metric space obtained by quotienting $B(X,\epsilon,o)$ by the relation: $(x_n) \sim (y_n)$ if $d\left( (x_n),(y_n) \right)=0$.  
\end{definition}

\noindent
The picture to keep in mind is that $(X, \epsilon_n \cdot d)$ is sequence of spaces we get from $X$ by ``zooming out'', and the asymptotic cone if the ``limit'' of this sequence. Roughly speaking, the asymptotic cones of a metric space are asymptotic pictures of the space. For instance, any asymptotic cone of $\mathbb{Z}^2$, thought of as the infinite grid in the plane, is isometric to $\mathbb{R}^2$ endowed with the $\ell^1$-metric; and the asymptotic cones of a simplicial tree (and more generally of any Gromov-hyperbolic space) are real trees.

\paragraph{Paulin's construction.} This paragraph is dedicated to the main construction of \cite{PaulinOut}, allowing us to construct an action of a group on one of its asymptotic cones thanks to a sequence of pairwise non-conjugate automorphisms. As our language is different (but equivalent), we give a self-contained exposition of the construction below. 

\medskip \noindent
Let $G$ be a non-trivial finitely generated group with a fixed generating set $S$ and let $\varphi_1, \varphi_2, \ldots \in \mathrm{Aut}(G)$ be a collection of automorphisms. The goal is to construct a non-trivial action of $G$ on one of its asymptotic cones from the sequence of twisted actions
$$\left\{ \begin{array}{ccc} G & \to & \mathrm{Isom}(G) \\ g & \to & \left( h \mapsto \varphi_n(g) \cdot h \right) \end{array} \right., \ n \geq 1.$$
For every $n \geq 1$, set $\lambda_n = \min\limits_{x \in G} \max\limits_{s \in S} d(x, \varphi_n(s) \cdot x)$; notice that $\lambda_n \geq 1$ since $\varphi_n(s) \cdot x \neq x$ for every $n \geq 1$ and every non-trivial $s \in S$. We suppose that $\lambda_n \underset{n \to + \infty}{\longrightarrow}+ \infty$. Now, fixing some non-principal ultrafilter $\omega$ over $\mathbb{N}$ and some sequence $o=(o_n) \in G^{\mathbb{N}}$ satisfying, for every $n \geq 1$, the equality $\max\limits_{s \in S} d(o_n, \varphi_n(s) \cdot o_n) =\lambda_n$, notice that the map
$$\left\{ \begin{array}{ccc} G & \to & \mathrm{Isom}(\mathrm{Cone}(G)) \\ g & \mapsto & \left( (x_n) \mapsto (\varphi_n(g) \cdot x_n) \right) \end{array} \right.$$
defines an action by isometries on $\mathrm{Cone}(G):= \mathrm{Cone}_{\omega}(G, (1/\lambda_n),o)$. The only fact to verify is that, if $g \in G$ and if $(x_n)$ defines a point of $\mathrm{Cone}(G)$, then so does $(\varphi_n(g) \cdot x_n)$. Writing $g$ as product of generators $s_1 \cdots s_r$ of minimal length, we have
$$\begin{array}{lcl} \displaystyle \frac{1}{\lambda_n} d(o_n, \varphi_n(g) \cdot x_n) & \leq & \displaystyle \frac{1}{\lambda_n} \sum\limits_{i=1}^r d(o_n, \varphi_n(s_i) \cdot x_n) \\ \\ & \leq & \displaystyle \frac{1}{\lambda_n} \sum\limits_{i=1}^r \left( d(o_n, \varphi_n(s_i) \cdot o_n) + d(o_n,x_n) \right) \\ \\ & \leq & \displaystyle \|g \|_S \left( 1+ \frac{1}{\lambda_n} d(o_n,x_n) \right) \end{array}$$
Consequently, if the sequence $(d(o_n,x_n))$ is $\omega$-bounded, i.e., if $(x_n)$ defines a point of $\mathrm{Cone}(G)$, then the sequence $(d(o_n,\varphi_n(g) \cdot x_n))$ is $\omega$-bounded as well, i.e., $(\varphi_n(g) \cdot x_n)$ also defines a point of $\mathrm{Cone}(G)$.

\begin{fact}\label{fact:PaulinFixedPoint}
The action $G \curvearrowright \mathrm{Cone}(G)$ does not fix a point.
\end{fact}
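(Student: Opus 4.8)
The plan is to show that the action $G \curvearrowright \mathrm{Cone}(G)$ cannot fix a point by exploiting the precise way the basepoints $o = (o_n)$ were chosen. Recall that each $o_n$ was selected to realize the minimum $\lambda_n = \max_{s \in S} d(o_n, \varphi_n(s) \cdot o_n)$, and that the asymptotic cone is formed with scaling sequence $(1/\lambda_n)$. The key consequence of this choice is that, in the rescaled metric, the point $(o_n)$ sees at least one generator $s \in S$ pushing it a distance that $\omega$-converges to exactly $1$. I would make this quantitative: for each $n$ there is a generator $s_n \in S$ with $d(o_n, \varphi_n(s_n) \cdot o_n) = \lambda_n$, and since $S$ is finite, by the ultrafilter some fixed generator $s \in S$ satisfies $d(o_n, \varphi_n(s) \cdot o_n) = \lambda_n$ for $\omega$-almost all $n$.

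Granting this, I would argue by contradiction. Suppose the action fixes a point, represented by a sequence $(x_n)$ defining a point $x \in \mathrm{Cone}(G)$. Being a global fixed point means that for every $g \in G$ the displacement $d_{\mathrm{Cone}}(x, g \cdot x) = \lim_\omega \frac{1}{\lambda_n} d(x_n, \varphi_n(g) \cdot x_n)$ equals zero; in particular this holds for the generator $s$ isolated above. On the other hand, the distance $d_{\mathrm{Cone}}(o, s \cdot o)$, where $o$ denotes the basepoint of the cone, equals $\lim_\omega \frac{1}{\lambda_n} d(o_n, \varphi_n(s) \cdot o_n) = \lim_\omega \frac{\lambda_n}{\lambda_n} = 1$. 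The strategy is then to compare the displacement of $s$ at $x$ with its displacement at $o$ and derive a contradiction with $1 > 0$.

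The comparison step is the technical heart of the argument, and I would carry it out with the triangle inequality together with the fact that $\varphi_n(s)$ acts by isometries on $G$ for each $n$. Specifically, since left multiplication by $\varphi_n(s)$ is an isometry of the word metric, I would write
\[
d(o_n, \varphi_n(s) \cdot o_n) \leq d(o_n, x_n) + d(x_n, \varphi_n(s) \cdot x_n) + d(\varphi_n(s) \cdot x_n, \varphi_n(s) \cdot o_n),
\]
and observe that the last term equals $d(x_n, o_n)$ by the isometry property. Dividing by $\lambda_n$ and passing to the $\omega$-limit, the two outer terms both converge to the finite quantity $d_{\mathrm{Cone}}(o, x)$, while the middle term converges to $d_{\mathrm{Cone}}(x, s \cdot x)$. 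This yields $1 \leq 2\, d_{\mathrm{Cone}}(o, x) + d_{\mathrm{Cone}}(x, s \cdot x)$, which by itself is not yet a contradiction.

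To close the gap I would instead center everything at $x$ rather than at $o$, which is the real subtlety. The minimality defining $\lambda_n$ says $o_n$ minimizes the maximal generator-displacement, so one cannot naively replace $o_n$ by $x_n$ and keep the lower bound $\lambda_n$; however, the point $x$ being \emph{fixed} forces every generator displacement at $x$ to vanish in the cone, and I would use this together with the inequality $\lambda_n \leq \max_{s \in S} d(x_n, \varphi_n(s) \cdot x_n)$ — which holds for \emph{every} point, by definition of $\lambda_n$ as a minimum. Rescaling this last inequality by $1/\lambda_n$ and taking the $\omega$-limit gives $1 \leq \max_{s \in S} d_{\mathrm{Cone}}(x, s \cdot x)$, and since $S$ is finite the maximum of finitely many $\omega$-limits equals the $\omega$-limit, so some generator $s'$ satisfies $d_{\mathrm{Cone}}(x, s' \cdot x) \geq 1 > 0$. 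This directly contradicts $x$ being a fixed point. The main obstacle, as I see it, is precisely the interchange of the finite maximum with the $\omega$-limit and ensuring the lower bound $\lambda_n \leq \max_s d(x_n, \varphi_n(s)\cdot x_n)$ is applied at the correct point; once one recognizes that this inequality holds at \emph{every} point because $\lambda_n$ is defined as a minimum over all of $G$, the argument is short.
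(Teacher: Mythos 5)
Your final paragraph is exactly the paper's proof: the definition of $\lambda_n$ as a minimum over all of $G$ gives $\lambda_n \leq \max_{s \in S} d(x_n, \varphi_n(s) \cdot x_n)$ at any candidate fixed point $(x_n)$, which after rescaling by $1/\lambda_n$ and passing to the $\omega$-limit (using finiteness of $S$) contradicts the vanishing of all generator displacements at a fixed point. The triangle-inequality comparison in your middle paragraph is an unnecessary detour, as you yourself note, but the argument you land on is correct and identical in substance to the one in the paper.
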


\noindent
Suppose that $G$ fixes a point $(x_n)$ of $\mathrm{Cone}(G)$. Then, for $\omega$-almost all $n$ and all $s \in S$, the inequality
$$\frac{1}{\lambda_n} d(x_n, \varphi_n(s) \cdot x_n) \leq \frac{1}{2}$$
holds, hence
$$\lambda_n \leq \max\limits_{s \in S} d(x_n, \varphi_n(s) \cdot x_n) \leq \lambda_n/2,$$
which is impossible.

\medskip \noindent
The conclusion is that we can associate a fixed-point free action of $G$ on one of its asymptotic cones from an infinite collection of automorphisms, provided that our sequence $(\lambda_n)$ tends to infinity. So the natural question is now: when does it happen? 

\begin{fact}\label{fact:lambdainfinity}
If the automorphisms $\varphi_1, \varphi_2, \ldots$ of $G$ are pairwise non-conjugate, then the equality $\lim\limits_{\omega} \lambda_n = + \infty$ holds. 
\end{fact}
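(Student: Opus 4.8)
The plan is to argue by contradiction, the crucial point being that $\lambda_n$ is not really attached to the automorphism $\varphi_n$ itself, but only to its class in $\mathrm{Out}(G)$. First I would rewrite the quantity being minimised: since $d$ is the left-invariant word metric associated to $S$, one has $d(x,\varphi_n(s)\cdot x)=\|x^{-1}\varphi_n(s)x\|_S$ for every $x\in G$, so that
\[ \lambda_n=\min_{x\in G}\ \max_{s\in S}\ \|x^{-1}\varphi_n(s)x\|_S. \]
Setting $\psi_{n,x}:=\iota(x^{-1})\circ\varphi_n$, the inner automorphism $\iota(x^{-1})$ ranges over all of $\mathrm{Inn}(G)$ as $x$ ranges over $G$, so the family $\{\psi_{n,x}\}_x$ is exactly the set of automorphisms defining the same outer automorphism as $\varphi_n$. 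Hence $\lambda_n$ equals the minimum of $\max_{s\in S}\|\theta(s)\|_S$ taken over all representatives $\theta$ of the outer class of $\varphi_n$, and this minimum is attained because the quantities involved are positive integers. In particular, $\lambda_n$ depends only on the image of $\varphi_n$ in $\mathrm{Out}(G)$.

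Next I would suppose, for contradiction, that $\lim\limits_\omega\lambda_n\neq+\infty$, so that $A:=\{n:\lambda_n\le M\}\in\omega$ for some integer $M$; because $\omega$ is non-principal, $A$ is infinite. For each $n\in A$ I would pick some $x_n$ realising the minimum and set $\psi_n:=\psi_{n,x_n}$, an automorphism sending every generator $s\in S$ into the ball $B_S(M)=\{g\in G:\|g\|_S\le M\}$. The key step is a local-finiteness observation: since $G$ is finitely generated, $B_S(M)$ is finite, and a homomorphism is determined by its values on the finite generating set $S$; hence the tuples $(\psi_n(s))_{s\in S}$ take only finitely many values. As $A$ is infinite, the pigeonhole principle yields $i\neq j$ in $A$ with $\psi_i=\psi_j$, whence $\varphi_i\varphi_j^{-1}=\iota(x_ix_j^{-1})\in\mathrm{Inn}(G)$, so $\varphi_i$ and $\varphi_j$ are conjugate. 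This contradicts the pairwise non-conjugacy of the $\varphi_n$, and shows that $\{n:\lambda_n\le M\}\notin\omega$ for every $M$, i.e. that $\lim\limits_\omega\lambda_n=+\infty$.

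I expect the only delicate point to be this first reduction: recognising that minimising over the base point $x$ is the same as minimising $\max_{s\in S}\|\theta(s)\|_S$ over the inner-automorphism orbit of $\varphi_n$, and thus that $\lambda_n$ is genuinely an invariant of the outer class (this is also what makes the non-conjugacy hypothesis the right one to invoke). Everything after that is an elementary counting argument, and it in fact gives slightly more than stated: the set $\{n:\lambda_n\le M\}$ is finite for every $M$, so $\lambda_n\to+\infty$ along $\mathbb{N}$ in the ordinary sense, the $\omega$-limit statement following a fortiori.
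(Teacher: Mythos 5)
Your proof is correct and follows essentially the same route as the paper: assume the $\omega$-limit is not infinite, extract an infinite set of indices on which $\lambda_n$ is bounded, observe that the conjugated automorphisms $\iota(x_n^{-1})\circ\varphi_n$ send the finite generating set into a fixed finite ball, and apply the pigeonhole principle to produce two $\varphi_i,\varphi_j$ differing by an inner automorphism. Your preliminary remark that $\lambda_n$ depends only on the outer class, and your closing observation that the argument yields $\lambda_n\to+\infty$ in the ordinary sense, are both correct but are only mild repackagings of the same computation.
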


\noindent
Suppose that the $\omega$-limit of $(\lambda_n)$ is not infinite. So there exists a subsequence $(\lambda_{\sigma(n)})$ which is bounded above by some constant $R$. So, for every $n \geq 1$ and every $s \in S$, one has
$$d \left( 1, o_{\sigma(n)}^{-1} \varphi_{\sigma(n)}(s) o_{\sigma(n)} \right) = d \left( o_{\sigma(n)}, \varphi_{\sigma(n)}(s) o_{\sigma(n)} \right) \leq \lambda_n \leq R,$$
i.e., $o_{\sigma(n)}^{-1} \varphi_{\sigma(n)}(s) o_{\sigma(n)} \in B(1,R)$. Since the ball $B(1,R)$ is finite, it follows from the pigeonhole principle that there exist two distinct $m,n \geq 1$ such that
$$o_{\sigma(n)}^{-1} \varphi_{\sigma(n)}(s) o_{\sigma(n)} = o_{\sigma(m)}^{-1} \varphi_{\sigma(m)}(s) o_{\sigma(m)}$$
for every $s \in S$. It follows that $\varphi_{\sigma(n)}$ and $\varphi_{\sigma(m)}$ are conjugate. This concludes the proof of our fact.

\medskip \noindent
So the point of Paulin's construction is that we can associate a fixed-point free action of $G$ on one of its asymptotic cones from an infinite subset of $\mathrm{Out}(G)$.

\paragraph{Rips' machinery.} Typically, when looking at an action on an asymptotic cone, the objective is to get an action on a real tree in order to construct, thanks to \emph{Rips' machinery}, an action on a simplicial tree, or equivalently according to Bass-Serre theory, a splitting of the corresponding group. This strategy is illustrated by the following statement, which is a special case of \cite[Corollary 5.2]{GuirardelTrees}. We recall that a group $G$ splits relatively to a subgroup $H$ if $G$ decomposes as an HNN extension or an amalgamated product such that $H$ is included into a factor, or equivalently such that $H$ fixes a vertex in the associated Bass-Serre tree.

\begin{thm}\label{thm:Ripsmachin}
Let $G$ be a finitely generated group acting fixed-point freely and minimally on a real tree $T$, and $H \subset G$ a subgroup fixing a point of $T$. Suppose that:
\begin{itemize}
	\item arc-stabilisers are finitely generated;
	\item a non-decreasing sequence of arc-stabilisers stabilises;
	\item for every arc $I$, there does not exist $g \in G$ satisfying $g \cdot \mathrm{stab}(I) \cdot g^{-1} \subsetneq \mathrm{stab}(I)$.
\end{itemize}
Assuming that $T$ is not a line, then $G$ splits relatively to $H$ over a cyclic-by-(arc-stabiliser) subgroup.
\end{thm}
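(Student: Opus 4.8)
The plan is to recognise this statement as the structural output of the \emph{Rips machine} for finitely generated groups acting on real trees, and to sketch the architecture of that machine while pinpointing where each of the three hypotheses enters. First I would observe that the three assumptions together say precisely that the action $G \curvearrowright T$ is \emph{super-stable}: arc-stabilisers are finitely generated, an ascending chain condition holds for nested arc-stabilisers, and no arc-stabiliser can be properly conjugated inside itself. This is the regularity needed to run the machine; with it in hand the goal becomes to produce a $G$-equivariant \emph{transverse covering} of $T$, or equivalently to present $T$ as the tree dual to a \emph{graph of actions} whose vertex actions are indecomposable.

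The heart of the proof is the classification of indecomposable super-stable actions. Following Rips (as developed by Bestvina--Feighn and Guirardel), I would encode the action by a band complex, resolve it, and run the iterated splitting and collapsing moves of the machine; the outcome is that each indecomposable piece is of one of the following types: \emph{simplicial} (discrete), \emph{axial} (a line carrying an abelian-type stabiliser), \emph{surface} (the tree dual to a measured foliation on a $2$-orbifold), or \emph{thin} (Levitt-type exotic). The decisive step, and the main obstacle, is to eliminate the thin exotic components. This is exactly where the second and third hypotheses are used: a Levitt-type component forces, along a suitable family of nested sub-arcs, either a strictly increasing chain of arc-stabilisers or an arc-stabiliser conjugated strictly into itself, both of which are forbidden. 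Discharging this dichotomy rigorously is the technical core of the Rips machine and the part I expect to be by far the hardest; the finite generation of arc-stabilisers is what keeps the band complex finite enough for the resolution to terminate.

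Once the thin components are excluded, I would extract the splitting. Because $T$ is assumed not to be a line, the graph-of-actions decomposition is nontrivial: either it already contains a genuine simplicial edge, or it contains a surface component. In the first case the edge-group is an arc-stabiliser. In the second case the essential and boundary curves of the $2$-orbifold produce edges with the announced cyclic-by-(arc-stabiliser) stabilisers, built from the infinite-cyclic group carried by the curve together with the arc-stabiliser fixing the surface piece. Collapsing the non-simplicial vertex actions to points then yields a genuine simplicial $G$-tree $S$, that is, a splitting of $G$ over cyclic-by-(arc-stabiliser) subgroups.

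It remains to make the splitting relative to $H$. Since $H$ fixes a point $p$ of $T$, I would track $p$ through the decomposition: $p$ lies in a single vertex action of the graph of actions, so $H$ stabilises that vertex and hence is elliptic in the simplicial tree $S$ obtained after collapsing. Thus $H$ is contained in a vertex-group of the resulting graph of groups, which is exactly what it means for $G$ to split relatively to $H$. The only point needing care here is that the collapsing map not separate the $H$-fixed point from the vertex carrying it, which is automatic because $H$ fixes an actual point of $T$ rather than merely stabilising an arc.
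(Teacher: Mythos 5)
First, a point of comparison: the paper contains no proof of this statement at all --- Theorem~\ref{thm:Ripsmachin} is quoted as a special case of \cite[Corollary 5.2]{GuirardelTrees}, so your sketch has to be measured against Guirardel's actual argument. Your outline names the right machinery, but its pivotal step is wrong: the three hypotheses do \emph{not} eliminate the thin (Levitt/exotic) components. Indeed, all three hypotheses hold vacuously when arc-stabilisers are trivial (the trivial group is finitely generated, any non-decreasing chain of trivial groups stabilises, and the trivial group cannot be conjugated properly into itself), and yet thin minimal components with trivial arc-stabilisers exist in abundance among geometric actions. Accordingly, neither the Bestvina--Feighn stable-actions theorem nor Guirardel's main theorem excludes the exotic case; the actual conclusion is a trichotomy: either $G$ splits over the stabiliser of an unstable arc, or $G$ splits over the stabiliser of an infinite tripod, or $T$ decomposes as a graph of actions with simplicial, Seifert-type and axial pieces only. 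The exotic behaviour is not discharged but \emph{exploited}: this is precisely where your second and third hypotheses enter, guaranteeing that the relevant (unstable-)arc stabilisers are finitely generated and not properly self-conjugate, so that a splitting over them can be constructed --- and one must then check that these splittings are of the announced cyclic-by-(arc-stabiliser) form and can be made relative to $H$. Since your plan reads ``once the thin components are excluded, extract the splitting,'' and they cannot be excluded, your argument has a hole in exactly the case you yourself identify as the hardest.

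Two further problems. The band-complex resolution route you describe is the Rips--Bestvina--Feighn machine for \emph{finitely presented} groups: band complexes are built from finite presentations, and finite generation of arc-stabilisers is not what makes the process terminate. The statement here is for finitely generated $G$ (even though the paper only applies it to hyperbolic, hence finitely presented, groups), and Guirardel's point is precisely to reach that generality by realising $T$ as a strong limit of geometric actions and transferring the structure through the limit --- this is where the ascending chain condition is really consumed, and your sketch has no substitute for it. Finally, in the relative part the delicate point is not the collapse map: it is the refinement at Seifert-type vertices, where one must choose the splitting curve of the $2$-orbifold so that $H$ and its conjugates, which are contained in point stabilisers of the Seifert piece, remain elliptic in the tree dual to that curve; this requires an argument about point stabilisers of such components, not merely the observation that $H$ fixes a point of $T$.
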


\paragraph{Proof of the main proposition.} We are now ready to prove Proposition \ref{prop:RelativeSplitting}. The strategy is to find an action on a real tree thanks to Paulin's construction and next to apply Theorem \ref{thm:Ripsmachin}. 

\begin{proof}[Proof of Proposition \ref{prop:RelativeSplitting}.]
Fix a finite generating set $S$ of $G$, a non-principal ultrafilter $\omega$ over $\mathbb{N}$, and, for every $n \geq 1$, set $\lambda_n = \min\limits_{x \in G} \max\limits_{s \in S} d(x, \varphi_n(s) \cdot x)$. According to Fact~\ref{fact:lambdainfinity}, one has $\lim\limits_{\omega} \lambda_n = + \infty$. It follows from Paulin's construction explained above that there exists some $o=(o_n) \in G^{\mathbb{N}}$ such that $G$ acts on the asymptotic cones $\mathrm{Cone}(G)= \mathrm{Cone}_\omega(G,(1/\lambda_n),o)$ via
$$\left\{ \begin{array}{ccc} G & \to & \mathrm{Isom}(\mathrm{Cone}(G)) \\ g & \mapsto & ((x_n) \mapsto (\varphi_n(g) \cdot x_n)) \end{array} \right..$$
Moreover, this action is fixed-point free according to Fact \ref{fact:PaulinFixedPoint}. Notice that $\mathrm{Cone}(G)$ is a real tree since $G$ is hyperbolic (see for instance \cite[Example 7.30]{Roe}).

\begin{claim}
The element $g$ fixes a point in $\mathrm{Cone}(G)$.
\end{claim}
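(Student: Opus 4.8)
The plan is to exploit the defining hypothesis that $\varphi_i(g)=\varphi_j(g)$ for all $i,j$: this means that $w:=\varphi_1(g)$ does \emph{not} depend on $n$, so $g$ acts on $\mathrm{Cone}(G)$ simply as left-translation by the single fixed element $w$, namely $(x_n)\mapsto (w\cdot x_n)$. Since this one fixed isometry of the Cayley graph of $G$ gets rescaled by the factor $1/\lambda_n\to 0$ when passing to the cone, its displacement in $\mathrm{Cone}(G)$ ought to vanish, and in fact I would directly exhibit a point of the cone fixed by $g$. Note that $w$ has infinite order, being the image of the infinite-order element $g$ under an automorphism. First I would record that the basepoint $o=(o_n)$ is moved only a bounded amount: writing $g=s_1\cdots s_k$ as a word of minimal length $k=\|g\|_S$ and using $d(o_n,\varphi_n(s)\cdot o_n)\leq\lambda_n$ for every $s\in S$, the same telescoping estimate as in the construction above gives $d(o_n,w\cdot o_n)=d(o_n,\varphi_n(g)\cdot o_n)\leq k\lambda_n$, hence $d_{\mathrm{Cone}}(o,g\cdot o)=\lim_\omega\tfrac{1}{\lambda_n}d(o_n,wo_n)\leq\|g\|_S$. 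This bounded (but possibly nonzero) displacement shows that the basepoint is in general the wrong candidate for a fixed point.

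Next I would bring in the geometry of $w$ on the Cayley graph $X$ of $(G,S)$, which is a proper geodesic $\delta$-hyperbolic space on which $w$ acts as a loxodromic isometry with a quasi-axis $A$; crucially, since $w$ is a single fixed element, $A$ and all associated constants depend only on $w$ and $G$, never on $n$. The standard inequality $d(x,wx)\geq 2d(x,A)+\tau(w)-C$ for loxodromic isometries, combined with the bound $d(o_n,wo_n)\leq k\lambda_n$ from the previous step, yields $d(o_n,A)=O(\lambda_n)$. Letting $y_n$ be a nearest-point projection of $o_n$ onto $A$, we get $d(o_n,y_n)=O(\lambda_n)$, so $\tfrac{1}{\lambda_n}d(o_n,y_n)$ is $\omega$-bounded and $(y_n)$ is an honest point of $\mathrm{Cone}(G)$. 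On its quasi-axis the displacement of $w$ is bounded by $\tau(w)+C(\delta)$, a constant independent of $n$, whence $d(y_n,wy_n)=O(1)$ and therefore
$$d_{\mathrm{Cone}}\big((y_n),g\cdot(y_n)\big)=\lim_\omega\frac{1}{\lambda_n}d(y_n,wy_n)=0.$$
Since points at distance $0$ in the cone coincide and $g\cdot(y_n)=(wy_n)$ by definition of the action, this gives $g\cdot(y_n)=(y_n)$, so $g$ fixes the point $(y_n)$, as claimed.

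The step I expect to be the main obstacle is the middle one: the naive candidate, the basepoint $o$, is only guaranteed to be moved a bounded yet possibly nonzero distance, so one cannot simply read off a fixed point there. The crux is the quantitative localization $d(o_n,\mathrm{axis}(w))=O(\lambda_n)$, which is exactly what lets the projected sequence $(y_n)$ stay inside the cone (its rescaled distance to $o$ remains bounded) while having displacement that dies in the $\omega$-limit. All of the hyperbolic-geometry constants entering here are harmless precisely because $w=\varphi_1(g)$ is one fixed element rather than a varying sequence.
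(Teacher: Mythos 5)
Your argument is correct and follows essentially the same route as the paper: both identify the common image $h=\varphi_i(g)$, project the basepoints $o_n$ onto its quasi-axis, check that the projections define a point of the cone because $d(o_n,\mathrm{axis})=O(\lambda_n)$, and observe that the displacement along the axis is $O(1)$, hence vanishes after rescaling by $1/\lambda_n$. The only cosmetic difference is that the paper passes to a large power $g^k$ in order to invoke the stable translation length, whereas you work with $g$ directly via the inequality $d(x,wx)\geq 2d(x,A)+\tau(w)-C$; both versions of the displacement estimate are standard in $\delta$-hyperbolic geometry and equally valid here.
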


\noindent
Let $h \in G$ be such that $\varphi_i(g)=h$ for every $i \geq 1$. Because $h$ has infinite order, it admits a quasi-axis in $G$. More precisely, if $\|h \|$ denotes $\min \{ d_S(x,hx) \mid x \in G\}$ and if $a \in G$ is such that $d(a,ha)= \|h\|$, then the orbit $\langle h \rangle \cdot a$ defines a quasi-geodesic line $\gamma$ \cite[Lemma~6.5 in Chapter 10]{GeomHypGroupes} on which $h$ acts by translations of length $\|h\|$. It is worth noticing that, although $\|h^k\|$ may differ from $k \|h\|$ for some $k \geq 1$, the difference $| \|h^k\|-k\|h\| |$ is bounded above by a constant which depends only on the hyperbolicity constant of $G$, say $\delta$. Indeed, setting $[h] = \lim\limits_{i \to + \infty} \frac{1}{i} d(x,h^ix)$ for some arbitrary basepoint $x \in G$ (the limit always exists and does not depend on the choice of $x$ according to \cite[Proposition~6.1 in Chapter 10]{GeomHypGroupes}), it is clear that $[h^k]=k \cdot [h]$ and moreover the difference $|[h^k] - \|h^k\| |$ is bounded above by a constant which only depends on $\delta$ according to \cite[Proposition~6.4 in Chapter 10]{GeomHypGroupes}, which proves our claim. By fixing some $k \geq 1$ sufficiently large compared to $\delta$, there exists some constant $D \geq 0$ such that
$$\left| d(x,h^kx)- k \| h \| - 2d(x,\gamma) \right| \leq D$$
for every $x \in G$. As a consequence, for every $n \geq 1$, one has 
$$d_S \left( o_n, \varphi_n(g^k) \cdot o_n \right)=d_S \left( o_n,h^k \cdot o_n \right) \geq k \|h \| +2d_S(o_n,p_n) - D$$
where $p_n$ denotes a point of $\gamma$ minimising the distance to $o_n$. Since $g^k \cdot o = (\varphi_n(g^k) \cdot o_n)$ defines a point of $\mathrm{Cone}(G)$, it follows from the previous inequality that the sequence $(d_S(o_n,p_n)/\lambda_n)$ is $\omega$-bounded, so that $p=(p_n)$ defines a point of $\mathrm{Cone}(G)$. We have
$$\begin{array}{lcl} d_{\mathrm{Cone}}(p,g^k \cdot p) & = & \displaystyle \lim\limits_{\omega} \frac{1}{\lambda_n} d_S(p_n, \varphi_n(g^k) \cdot p_n) = \lim\limits_{\omega} \frac{1}{\lambda_n} d_S(p_n, h^k \cdot p_n) \\ \\ & \leq & \displaystyle \lim\limits_{\omega} \frac{1}{\lambda_n} \left( k \| h \| +D \right) =0. \end{array}$$
Thus, we have proved that $g^k$ fixes a point of $\mathrm{Cone}(G)$, which implies that $g$ has to fix a point of $\mathrm{Cone}(G)$ as well, concluding the proof of our claim.

\begin{claim}
Arc-stabilisers in $\mathrm{Cone}(G)$ are virtually cyclic.
\end{claim}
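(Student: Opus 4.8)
The plan is to establish the statement in contrapositive, structural form: an arc-stabiliser in $\mathrm{Cone}(G)$ cannot contain a non-abelian free subgroup, and since $G$ is hyperbolic this forces it to be virtually cyclic (possibly finite). Fix a non-degenerate arc $I = [a,b]$, represented by sequences $a = (a_n)$, $b = (b_n)$, and write $L_n := d_S(a_n, b_n)$, so that $\lim_\omega L_n / \lambda_n = d_{\mathrm{Cone}}(a,b) =: \Lambda > 0$. Let $K$ be the stabiliser of $I$; note that $K$ is genuinely a subgroup of $G$, since the twisted formula $g \mapsto ((x_n) \mapsto (\varphi_n(g)\cdot x_n))$ defines an honest action. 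Unravelling the definitions, an element $g \in K$ satisfies $\lim_\omega d_S(a_n, \varphi_n(g)\cdot a_n)/\lambda_n = 0$ and $\lim_\omega d_S(b_n, \varphi_n(g)\cdot b_n)/\lambda_n = 0$.

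Suppose, for contradiction, that $K$ is not virtually cyclic. As a subgroup of a hyperbolic group, $K$ is then non-elementary, so it contains two elements $u, v$ generating a free subgroup $F := \langle u, v\rangle$ of rank two. The heart of the argument is to transport the ``almost-fixing'' information at the two far-apart points $a_n, b_n$ into a statement about the quasi-axes of $\varphi_n(u)$ and $\varphi_n(v)$ in the Cayley graph of $G$. Concretely, I would argue as follows. For a small $\eta > 0$ to be chosen, the set of indices $n$ for which simultaneously $L_n \ge (\Lambda/2)\lambda_n$ and each of $\varphi_n(u), \varphi_n(v)$ displaces both $a_n$ and $b_n$ by at most $\eta\lambda_n$ belongs to $\omega$, hence is non-empty; I fix such an $n$ with $\lambda_n$ large. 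Since $\varphi_n$ is an automorphism, $\varphi_n(u)$ and $\varphi_n(v)$ have infinite order, so they are loxodromic in $G$; as their displacement at $a_n$ is at most $\eta\lambda_n$, their translation lengths are $o(\lambda_n)$ and, by the standard displacement estimate in $\delta$-hyperbolic spaces (see for instance \cite{GeomHypGroupes}), both $a_n$ and $b_n$ lie within $O(\eta\lambda_n + \delta)$ of the quasi-axes of $\varphi_n(u)$ and of $\varphi_n(v)$.

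From here the conclusion is purely geometric. A quasi-geodesic passing within distance $r$ of the two endpoints of a geodesic of length $L_n$ fellow-travels that geodesic, up to error $r + O(\delta)$, along a subsegment of length at least $L_n - 2r$; applying this to both quasi-axes shows that $\mathrm{axis}(\varphi_n(u))$ and $\mathrm{axis}(\varphi_n(v))$ remain within $O(\eta\lambda_n + \delta)$ of one another along a segment of length $\approx \Lambda\lambda_n$. Choosing $\eta$ small and $\lambda_n$ large makes this overlap enormous compared with $\eta\lambda_n + \delta$ and with the two translation lengths, which forces $\varphi_n(u)$ and $\varphi_n(v)$ to share their pair of endpoints on $\partial G$; consequently $\langle \varphi_n(u), \varphi_n(v)\rangle$ fixes a pair of points at infinity and is therefore virtually cyclic. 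But $\langle \varphi_n(u), \varphi_n(v)\rangle = \varphi_n(F)$ is isomorphic to the rank-two free group $F$, a contradiction. Hence $K$ is virtually cyclic.

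I expect the main obstacle to be the careful bookkeeping of the additive hyperbolicity constants in the two geometric estimates, namely the displacement/distance-to-axis bound and the ``long overlap of axes forces shared endpoints'' lemma, and in particular the verification that all error terms genuinely are $o(\lambda_n)$, so that after dividing by $\lambda_n$ and passing to the $\omega$-limit the overlap length $\Lambda$ dominates. Once the thresholds are pinned down this is a routine, if slightly tedious, exercise in $\delta$-hyperbolic geometry; the conceptual content lies entirely in the observation that almost-fixing two points at cone-distance $\Lambda$ apart amounts to almost-stabilising a long geodesic, which a pair of independent loxodromics cannot do.
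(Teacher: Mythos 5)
Your reduction to a rank-two free subgroup $F=\langle u,v\rangle\leq K$ is fine (every non-elementary subgroup of a hyperbolic group contains one), as are the displacement estimates showing that, for $\omega$-many $n$, the quasi-axes of $\varphi_n(u)$ and $\varphi_n(v)$ pass within $O(\eta\lambda_n+\delta)$ of both $a_n$ and $b_n$ and hence fellow-travel along a segment of length $\approx\Lambda\lambda_n$. The gap is in the last step: \emph{a long overlap of axes does not force shared endpoints at infinity}, even when the overlap length is enormous compared with the fellow-travelling constant, $\delta$, and both translation lengths. Concretely, in a cocompact surface group $\Gamma\leq\mathrm{PSL}_2(\mathbb{R})$, fix a loxodromic $g$ with axis $\gamma$; since the $\Gamma$-orbit of the endpoint pair of $\gamma$ is dense in $\partial\mathbb{H}^2\times\partial\mathbb{H}^2\setminus\Delta$, there are conjugates $wgw^{-1}$ whose axes are $1$-close to $\gamma$ along arbitrarily long segments while remaining distinct; $g$ and $wgw^{-1}$ have fixed translation length, do not share endpoints, and generate a non-elementary subgroup. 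So no threshold of the form ``overlap $\gg$ constant $+$ translation lengths'' can yield your conclusion. The problem is compounded here because your closeness constant $\eta\lambda_n$ is not even bounded as $n\to\infty$: what you can extract from the overlap is only that $[\varphi_n(u),\varphi_n(v)]$ has small displacement somewhere of order $O(\eta\lambda_n+\delta)$, which tends to infinity and gives no contradiction.

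The fix — and this is essentially the argument of Paulin to which the paper defers (Proposition 2.4 of \cite{PaulinOut}) — is to work with the geodesic $[a_n,b_n]$ directly rather than with the axes, and to exploit a cancellation that makes the error independent of $n$. An isometry moving both endpoints of a geodesic of length $L_n\geq(\Lambda/2)\lambda_n$ by at most $\eta\lambda_n\ll L_n$ acts on the middle of that geodesic as an approximate translation by some $t\in[-\eta\lambda_n,\eta\lambda_n]$, with additive error $O(\delta)$ only. The commutator of two such approximate translations translates by $t_1+t_2-t_1-t_2=0$, so $\varphi_n([x,y])$ moves the midpoint $m_n$ of $[a_n,b_n]$ by at most $A\delta$ with $A$ universal — crucially, not by $O(\eta\lambda_n)$. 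Applying this to the infinitely many pairwise distinct, non-trivial elements $[u^j,v^k]\in F$ and choosing $n$ (in a single $\omega$-large set) so that more than $|B(1,A\delta)|$ of them satisfy the bound simultaneously, the pigeonhole principle forces $\varphi_n([u^j,v^k])=\varphi_n([u^{j'},v^{k'}])$ for two distinct pairs, contradicting the injectivity of $\varphi_n$. Your write-up is missing exactly this cancellation; without it the error terms you track are genuinely of order $\eta\lambda_n$ and the limit argument cannot close.
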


\noindent
For a proof of this claim, we refer to \cite[Proposition 2.4]{PaulinOut} whose arguments can be easily adapted to the language of asymptotic cones. 

\medskip \noindent
Let $T \subset \mathrm{Cone}(G)$ be a subtree on which $G$ acts minimally. Notice that $T$ cannot be a line since otherwise the commutator subgroup of $G$ would be abelian, implying that $G$ is solvable, and thus contradicting the fact that $G$ is a non-elementary hyperbolic group. Moreover:
\begin{itemize}
	\item A non-decreasing sequence $Z_1 \subset Z_2 \subset \cdots$ of virtually cyclic subgroups of $G$ stabilises. If the $Z_i$'s are all finite, the conclusion follows from the fact that the cardinality of a finite subgroup in a hyperbolic group is bounded above by a constant which depends only on the hyperbolicity constant of the group (see \cite[Corollary 2.2.B]{GromovHyp} or \cite{FiniteHypSub}). Consequently, up to extracting a subsequence, we may suppose without loss of generality that the $Z_i$'s are all infinite. Notice that $Z_1$ must have finite index in $Z_i$ for every $i \geq 1$, so that the $Z_i$'s are all contained into the commensurator $$\mathrm{Com}(Z_1)= \left\{ g \in G \mid [Z_1: Z_1 \cap gZ_1g^{-1}], [gZ_1g^{-1}: Z_1 \cap gZ_1g^{-1}] < + \infty \right\}$$ of $Z_1$. But, as a quasiconvex subgroup, $Z_1$ has finite index in its commensurator \cite{QuasiconvexHyp}, so that $Z_1,Z_2, \ldots$ defines a collection of subgroups of $\mathrm{Com}(Z_1)$, which is virtually cyclic, all of indices $\leq [\mathrm{Com}(Z_1):Z_1]<+ \infty$. Because there exist only finitely many such subgroups, the desired conclusion follows. 
	\item If $Z \leq G$ is virtually cyclic and $g \in G$, then $g Z g^{-1} \subset Z$ implies $gZg^{-1}=Z$. If $Z$ is finite, the conclusion is clear as $Z$ and $gZg^{-1}$ have the same cardinality. Now assume that $Z$ is infinite cyclic. Notice that $g$ belongs to the commensurator of $Z$. But, as a quasiconvex subgroup, $Z$ has finite index in its commensurator \cite{QuasiconvexHyp} so that there exists some $k \geq 1$ such that $g^k \in Z$. From $$Z=g^kZg^{-k} \subset g^{k-1}Z g^{-k+1} \subset g^{k-2} Z g^{-k+2} \subset \cdots \subset gZg^{-1} \subset Z$$ the desired equality follows.
\end{itemize}
Consequently, Theorem \ref{thm:Ripsmachin} applies, and we conclude that $G$ must split relatively to $\langle g \rangle$ over a virtually cyclic subgroup. 
\end{proof}

\section{Proof of the main theorem}\label{section:main}

\noindent
Our last section is dedicated to the proof of Theorem \ref{thm:main}, namely:

\begin{thm}\label{thm:mainstrong}
If $G$ is a one-ended hyperbolic group, then $\mathrm{Aut}(G)$ is acylindrically hyperbolic.
\end{thm}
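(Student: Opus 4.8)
The plan is to assemble the machinery developed in the previous sections into a single argument producing a WPD element for the action $\mathrm{Aut}(G) \curvearrowright T$, where $T$ is the JSJ tree. As noted in the introduction, two cases are trivial and should be disposed of first: if $\mathrm{Out}(G)$ is finite, then $\mathrm{Inn}(G) \simeq G/Z(G)$ has finite index in $\mathrm{Aut}(G)$, so $\mathrm{Aut}(G)$ is commensurable to $G$ and hence hyperbolic (and acylindrically hyperbolic, being non-elementary); if $G$ is virtually a surface group, the conclusion follows from known results on mapping class groups (Corollary \ref{cor:MCGaut}). So I would assume from now on that $\mathrm{Out}(G)$ is infinite and that $G$ is not virtually a surface group, which is precisely the setting in which the JSJ tree $T$ of Theorem \ref{thm:JSJ} exists and, by Lemma \ref{lem:JSJnontrivial}, is non-trivial, so that $G \curvearrowright T$ admits loxodromic isometries.

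\textbf{Finding a good element.} The first substantive step is to produce an element of $G$ whose inner automorphism will be the desired WPD element. By Proposition \ref{prop:WPDinG}, every loxodromic $g \in G$ is WPD for $G \curvearrowright T$, so by the small cancellation theorem of \cite{DGO} alluded to in the introduction, I may pass to a power and take a loxodromic $g$ whose normal closure $\llangle g \rrangle$ is free and meets the vertex- and edge-stabilisers of $T$ trivially. The key point, following the proof sketch, is to find a finitely generated free subgroup $H \leq \llangle g \rrangle$ that is not elliptic in any splitting of $G$ over a virtually cyclic subgroup. Granting such an $H$, an application of Whitehead's algorithm \cite{Whitehead} yields an element $h \in H$ such that $H$ does not split freely relative to $\langle h \rangle$.

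\textbf{Running the dichotomy.} Now I would apply the criterion of Proposition \ref{prop:ifnotWPD} to the element $h$ (which, lying in $\llangle g \rrangle$, is loxodromic in $T$ and hence WPD for $G \curvearrowright T$). Suppose for contradiction that $\iota(h^s)$ fails to be WPD for $\mathrm{Aut}(G) \curvearrowright T$ for every $s \geq 1$. Then Proposition \ref{prop:ifnotWPD} produces $s \geq 1$ and infinitely many pairwise non-conjugate automorphisms $\varphi_1, \varphi_2, \ldots$ with $\varphi_i(h^s) = \varphi_j(h^s)$ for all $i,j$. Since $h^s$ has infinite order, Proposition \ref{prop:RelativeSplitting} then forces $G$ to split relative to $\langle h^s \rangle$, and hence relative to $\langle h \rangle$, over a virtually cyclic subgroup. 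By the choice of $H$ this splitting must make $H$ elliptic on a subtree, contradicting the defining property of $h$ via the relative Whitehead statement: ellipticity of $H$ relative to $\langle h\rangle$ in a virtually cyclic splitting translates into a free splitting of $H$ relative to $\langle h\rangle$, which was excluded. Therefore some $\iota(h^s)$ is WPD for $\mathrm{Aut}(G) \curvearrowright T$. Since the action of $\mathrm{Aut}(G)$ on the tree $T$ is non-elementary (it already is for the subgroup $\mathrm{Inn}(G)$, as $G \curvearrowright T$ has loxodromics and $\mathrm{Aut}(G)$ is not virtually cyclic), the existence of a single WPD loxodromic element yields, by the characterisation of acylindrical hyperbolicity in \cite{OsinAcyl}, that $\mathrm{Aut}(G)$ is acylindrically hyperbolic.

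\textbf{The main obstacle.} I expect the genuinely delicate step to be the construction of the finitely generated free subgroup $H \leq \llangle g \rrangle$ that is not elliptic in any virtually cyclic splitting of $G$, together with the compatibility between the two notions of splitting: one must show that ellipticity of $H$ in a virtually cyclic splitting of $G$ \emph{relative} to $\langle h \rangle$ descends to a genuine free splitting of the abstract free group $H$ relative to $\langle h \rangle$, so that the Whitehead obstruction can be invoked. This hinges on the fact that $\llangle g \rrangle$ is free and intersects edge- and vertex-stabilisers trivially, so that the induced action of $H$ on the Bass--Serre tree of any such splitting of $G$ has trivial (hence free-splitting) edge-stabilisers; the remaining care is bookkeeping to ensure the relative structure at $\langle h \rangle$ is preserved. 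The other steps are essentially citations to the results already established in Sections \ref{section:WPD} and \ref{section:Paulin}.
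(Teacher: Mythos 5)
Your overall architecture matches the paper's proof exactly: dispose of the virtually-surface and $\mathrm{Out}(G)$-finite cases, pass to a power of a loxodromic $g$ so that $\llangle g \rrangle$ is free and meets vertex- and edge-stabilisers of $T$ trivially, extract $h$ via Whitehead, and play Proposition \ref{prop:ifnotWPD} off against Proposition \ref{prop:RelativeSplitting}. But the one step that is not a citation --- producing a finitely generated free subgroup $H \leq \llangle g \rrangle$ that is elliptic in \emph{no} virtually cyclic splitting of $G$ --- is precisely the step you defer (``granting such an $H$\ldots'') and then flag as ``the main obstacle'' without resolving. That is a genuine gap, not bookkeeping: no obvious candidate works, since a priori $\langle g\rangle$ itself, or any particular finitely generated subgroup of $\llangle g \rrangle$, could be elliptic in some virtually cyclic splitting $T'$ of $G$ other than $T$, and the entire theorem reduces to manufacturing an element of $G$ that is loxodromic in \emph{every} such $T'$.

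The paper's construction is concrete: fix a finite generating set $S$ of $G$ and take $H = \langle xgx^{-1},\ x \in S \cup \{1\} \rangle$. That $H$ fixes no point of a virtually cyclic Bass--Serre tree $T'$ is proved by a case analysis. First, edge-stabilisers of $T'$ meet $H$ trivially (by Theorem \ref{thm:JSJ} they are elliptic in $T$, and $\llangle g \rrangle$ meets point-stabilisers of $T$ trivially), so an elliptic element of $H$ fixes a single vertex. If some $x \in S$ is loxodromic in $T'$, then $g$ and $xgx^{-1}$ are elliptic with disjoint (one-point) fixed-point sets, so their product, which lies in $H$, is loxodromic by Serre's lemma [Proposition I.6.26 of \cite{MR1954121}]. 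If instead every $x \in S$ is elliptic, then since $S$ generates $G$ and $G$ is not elliptic in $T'$, some $x \in S$ misses the unique vertex fixed by $g$, and a short separation argument shows $x$ and $gx^{-1}g^{-1}$ have disjoint fixed-point sets, whence $xgx^{-1}g^{-1} \in H$ is loxodromic. Combined with the triviality of edge-stabilisers, any such $T'$ then induces a non-trivial free splitting of $H$, relative to $\langle h \rangle$ whenever $h$ is elliptic in $T'$; this is the contradiction with the Whitehead choice of $h$ that you anticipated. Without this construction and case analysis, your argument does not close.
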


\noindent
We need to distinguish two cases, depending on whether $G$ is virtually a surface group or not. In the former case, the desired conclusion essentially follows from the next general observation:

\begin{lemma}\label{lem:AutAcylGen}
Let $\mathcal{H}$ be a collection of finitely generated groups such that:
\begin{itemize}
	\item if $H \in \mathcal{G}$ then every finite-index subgroup of $H$ belongs to $\mathcal{H}$;
	\item if $H \in \mathcal{H}$, then $\mathrm{Aut}(H)$ acts non-elementarily and acylindrically on a hyperbolic space with at least one inner automorphism which is a loxodromic isometry. 
\end{itemize}
If $G$ is a hyperbolic group containing a finite-index subgroup in $\mathcal{H}$, then $\mathrm{Aut}(G)$ is acylindrically hyperbolic or virtually cyclic.
\end{lemma}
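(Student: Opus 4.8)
The plan is to reduce the study of $\mathrm{Aut}(G)$ to that of $\mathrm{Aut}(N)$ for a \emph{characteristic} finite-index subgroup $N \in \mathcal{H}$, and then to pull back the acylindrical action furnished by the second hypothesis. First I would replace the given finite-index subgroup by a characteristic one. Since $G$ is finitely generated, it has only finitely many subgroups of a given index, so the $\mathrm{Aut}(G)$-orbit of the given subgroup $N' \in \mathcal{H}$ is finite and $N := \bigcap_{\varphi \in \mathrm{Aut}(G)} \varphi(N')$ is a finite intersection. Hence $N$ is a finite-index characteristic subgroup of $G$ contained in $N'$, and, being a finite-index subgroup of $N' \in \mathcal{H}$, it belongs to $\mathcal{H}$ by the first hypothesis. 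Because $N$ is characteristic, restriction defines a homomorphism $\rho : \mathrm{Aut}(G) \to \mathrm{Aut}(N)$.

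The key step is to show that $\ker \rho$ is finite. Fix $\varphi \in \ker \rho$, so that $\varphi$ fixes $N$ pointwise. For $g \in G$ and $n \in N$ one has $gng^{-1} \in N$, hence $\varphi(g) n \varphi(g)^{-1} = \varphi(gng^{-1}) = gng^{-1}$, which shows that $g^{-1}\varphi(g) \in C_G(N)$. The assignment $g \mapsto g^{-1}\varphi(g)$ then descends to a map $G/N \to C_G(N)$ (it is constant on $N$-cosets, since its values centralise $N$), and $\varphi$ is recovered from this map through $\varphi(g) = g \cdot (g^{-1}\varphi(g))$. Now $N$ is non-elementary: it carries a loxodromic inner automorphism, so $N/Z(N)$ contains an infinite-order element and $N$ is infinite and not virtually cyclic. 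Consequently $C_G(N)$ embeds into the centraliser of any infinite-order element of $N$, hence is virtually cyclic; and since an infinite-order element of $C_G(N)$ would have to commute with two independent loxodromic elements of $N$, one deduces that $C_G(N)$ is finite. As $G/N$ is finite and $C_G(N)$ is finite, there are only finitely many maps $G/N \to C_G(N)$, and therefore $\ker\rho$ is finite.

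Finally I would transport the geometry. By the second hypothesis applied to $N \in \mathcal{H}$, the group $\mathrm{Aut}(N)$ acts acylindrically on a hyperbolic space $X$ with some inner automorphism $\iota_N(n)$, $n \in N$, acting loxodromically. Pulling this action back along $\rho$ yields an action $\mathrm{Aut}(G) \curvearrowright X$; since the fibres of $\rho$ have cardinality at most $|\ker\rho| < \infty$, acylindricity is preserved (the finite sets appearing in its definition grow by at most the factor $|\ker\rho|$). Moreover $\rho(\iota_G(n)) = \iota_N(n)$, so the inner automorphism $\iota_G(n) \in \mathrm{Aut}(G)$ acts loxodromically on $X$. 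Thus $\mathrm{Aut}(G)$ acts acylindrically on a hyperbolic space with a loxodromic element, and Osin's classification of such actions \cite{OsinAcyl} forces $\mathrm{Aut}(G)$ to be either virtually cyclic or acylindrically hyperbolic.

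I expect the main obstacle to be the finiteness of $\ker\rho$: the heart of the matter is verifying that $C_G(N)$ is finite and then packaging the kernel as (twisted) maps valued in this finite centraliser. By contrast, the passage to a characteristic subgroup and the preservation of acylindricity under a finite-kernel pullback are comparatively routine.
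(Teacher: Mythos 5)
Your argument is correct, and its overall skeleton (pass to a characteristic finite-index subgroup $N\in\mathcal{H}$, restrict, show the restriction $\rho:\mathrm{Aut}(G)\to\mathrm{Aut}(N)$ has finite kernel, transport the acylindrical action) matches the paper's. Where you genuinely diverge is in the heart of the matter, the finiteness of $\ker\rho=\mathrm{Aut}(G,N)$. The paper proves this via a cocycle computation: writing $\varphi(g)=g\sigma(g)$, it shows $\varphi$ is determined by the values $\sigma(a_i)$ on coset representatives, arranges (using density of fixed-point pairs on $\partial G$ and a result on products with high powers) that each $a_i$ has infinite order, and then bounds the possibilities for $\sigma(a_i)$ by the finiteness of the set of $n$-th roots of an infinite-order element of a hyperbolic group. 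You instead observe that the defect $g\mapsto g^{-1}\varphi(g)$ lands in $C_G(N)$ and descends to a map $G/N\to C_G(N)$, and that $C_G(N)$ is finite because $N$ is a non-elementary subgroup of a hyperbolic group (so $C_G(N)$ sits in a virtually cyclic centraliser and contains no infinite-order element, two independent loxodromics of $N$ forcing a contradiction). This is shorter and cleaner than the paper's root-counting, and it isolates the exact input needed ($N$ normal of finite index with finite centraliser); the paper's version, by contrast, does not need to identify where the defect lives and is the kind of argument that could survive in settings where $C_G(N)$ is harder to control. Your final step also differs cosmetically: you pull the acylindrical action back along $\rho$ and check acylindricity directly (correctly --- fibres have size at most $|\ker\rho|$), whereas the paper restricts the action to the image $\mathrm{Aut}(N<G)\supseteq\mathrm{Inn}(N)$ and then invokes stability of acylindrical hyperbolicity under finite-kernel extensions; both are fine. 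One small point to tighten: your justification that $N$ is not virtually cyclic (``$N/Z(N)$ contains an infinite-order element'') is not by itself conclusive, since $D_\infty$ has trivial centre and infinite-order elements; the correct reason is that the hypothesis makes $\mathrm{Aut}(N)$ acylindrically hyperbolic, hence not virtually cyclic, while an elementary $N$ would have virtually cyclic (or finite) automorphism group --- the same fact the paper invokes in passing.
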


\begin{proof}
Let $G$ be a group and $H \leq G$ a finite-index subgroup which belongs to $\mathcal{H}$. Assume that $\mathrm{Aut}(G)$ is not virtually cyclic. Up to replacing $H$ with $\bigcap\limits_{\varphi \in \mathrm{Aut}(G)} \varphi(H)$, which is also a finite-index subgroup of $G$ as $G$ contains only finitely many subgroups of a given index and which also belongs to $\mathcal{H}$ because it has finite index in $H$ for the same reason, we may suppose without loss of generality that $H$ is a characteristic subgroup of $G$. As a consequence, the restriction map $\varphi \mapsto \varphi_{|H}$ induces a morphism $\mathrm{Aut}(G) \to \mathrm{Aut}(H)$. Notice that, as a consequence of the next claim, the kernel of this morphism, which coincides with the collection $\mathrm{Aut}(G,H)$ of the automorphisms of $G$ fixing $H$ pointwise, turns out to be finite.

\begin{claim}\label{claim:finitelymany}
Let $G$ be a hyperbolic group and $H \leq G$ a finite-index subgroup. Then $\mathrm{Aut}(G,H)$ is finite.
\end{claim}

\noindent
Let us explain how to conclude the proof of our lemma by assuming this claim.

\medskip \noindent
Because being acylindrically hyperbolic is stable under quotients with finite kernels (according to \cite[Lemma 1]{ErratumMO}), $\mathrm{Aut}(G)$ is acylindrically hyperbolic if so is its image in $\mathrm{Aut}(H)$, which coincides with the subgroup $\mathrm{Aut}(H < G) \leq \mathrm{Aut}(H)$ of the automorphisms of $H$ which extends to $G$. Notice that $\mathrm{Aut}(H<G)$ contains $\mathrm{Inn}(H)$. Therefore, the acylindrical action of $\mathrm{Aut}(H)$ given by assumption provides an acylindrical action of $\mathrm{Aut}(H<G)$ on a hyperbolic space with at least one loxodromic isometry. It follows that $\mathrm{Aut}(H<G)$ is either acylindrically hyperbolic or virtually cyclic, concluding the proof.

\medskip \noindent
Now, let us turn to the proof of Claim \ref{claim:finitelymany}. More precisely, we want to prove that the finite-index subgroup of $\mathrm{Aut}(G,H)$ acting trivially on $G/H$ is finite. Fix a system of representatives $a_1, \ldots, a_n \in G$ so that $G = Ha_1 \sqcup \cdots \sqcup Ha_n$. (Notice that $n= [G:H]$.) Without loss of generality, we may suppose that, for every $1 \leq i \leq n$, the element $a_i$ has infinite order. Indeed, assume that $a_i$ has finite order. Fix two points $\alpha, \beta \in \partial G$ such that $ \beta \notin \{ \alpha, a_i \cdot \alpha \}$. (Notice that, as $\mathrm{Aut}(H)$ is acylindrically hyperbolic, $H$ has to be non-elementary. So the existence of our points $\alpha$ and $\beta$ is justified by the fact that $\partial G$ is infinite.) Because the collection of pairs of points fixed by the infinite-order elements of $H$ is dense in $\partial G \times \partial G$ \cite[Corollary 8.2.G]{GromovHyp}, there exists an infinite-order element $h \in H$ whose fixed points at infinity are arbitrarily close to $\alpha$ and $\xi$. As a consequence of \cite[Theorem 3.1]{Simul}, $h^ma_i$ must have infinite order for some $m \geq 1$. Therefore, replacing $a_i$ with $h^ma_i$ leads to the desired conclusion.

\medskip \noindent
So let $\varphi \in \mathrm{Aut}(G,H)$ be an automorphism acting trivially on $G/H$. In other words, $\varphi$ is an automorphism of $G$ which fixes pointwise $H$ and which stabilises each coset of $H$. As a consequence, there exists a map $\sigma : G \to H$ which is constant to $1$ on $H$ and such that $\varphi(g)=g \sigma(g)$ for every $g \in G$. Notice that, for every $a,b \in G$, one has
$$ab \sigma(ab) = \varphi(ab)= \varphi(a) \varphi(b) = a \sigma(a) b \sigma(b),$$
hence $\sigma(ab)= b^{-1} \sigma(a) b \sigma(b)$. The latter equality will referred to in the sequel as the \emph{cocycle equality}. 

\medskip \noindent
For every $1 \leq i \leq n$, it follows from the cocycle equality and from the fact that $\sigma$ is constant to $1$ on $H$ that
$$\varphi(ha_i)= ha_i \sigma(ha_i) = h a_i a_i^{-1} \sigma(h) a_i \sigma(a_i)= ha_i \sigma(a_i)$$
for every $h \in H$. Therefore, $\varphi$ is uniquely determined by the values $\sigma(a_1), \ldots, \sigma(a_n)$. As a consequence, if we show that, for every $1 \leq i \leq n$, the number of possible values for $\sigma(a_i)$ is bounded above by a constant which does not depend on $\varphi$, then the proof of our claim will be complete.

\medskip \noindent
So fix some $a \in \{a_1, \ldots, a_n\}$. Recall that $n= [G:H]$, so $a^n$ belongs to $H$. It follows from the cocycle equality that
$$\sigma(a^n)= \sigma(a \cdot a^{n-1})= a^{-n+1} \sigma(a) a^{n-1} \sigma(a^{n-1}).$$
By induction, we deduce that $\sigma(a^n)= a^{-n+1} (\sigma(a)a)^{n-1} \sigma(a)$. Indeed, the equality clearly holds for $n=1$, and if we assume that it also holds for $n-1$ (where $n \geq 2$) then
$$\begin{array}{lcl} \sigma(a^n) & = & a^{-n+1}\sigma(a)a^{n-1} \sigma(a^{n-1}) = a^{-n+1} \sigma(a) a^{n-1} \cdot a^{-n+2} (\sigma(a)a)^{n-2} \sigma(a) \\ \\ & = & a^{-n+1} \sigma(a)a (\sigma(a)a)^{n-2} \sigma(a) =  a^{-n+1}(\sigma(a)a)^{n-1} \sigma(a). \end{array}$$
But $\sigma$ is constant to $1$ on $H$, so $\sigma(a^n)=1$. Therefore, we have the equality $a^n = (\sigma(a)a)^n$. But $G$ is hyperbolic and $a$ has infinite order, so $a^n$ may have only finitely many $n$th roots. (Indeed, if $r_1, \ldots, r_k$ are $n$th roots of an infinite-order element $g$ and if $k$ is greater than the index of $\langle g \rangle$ inside the centraliser of $g$, then there must exist $1 \leq i<j \leq k$ such that $r_i^{-1}r_j \in \langle g \rangle$ as $r_1, \ldots, r_k$ clearly commute with $g$. By fixing some $s \in \mathbb{Z}$ such that $r_i^{-1}r_j =g^s$, one gets
$$g = r_j^n = (r_ig^s)^n = r_i^n g^{ns}=g^{ns+1},$$
hence $g^{ns}=1$. We conclude that $s=0$ since $g$ has infinite order, i.e., $r_i=r_j$.) 
Hence there are finitely many possible choices for $\sigma(a)$. Thus, we have proved that the number of possible values for the $\sigma(a_i)$'s is bounded above by a constant which depends only on our representatives $a_1, \ldots, a_n$ and on the index $[G:H]$. The proof of our claim is complete. 
\end{proof}

\begin{remark}
It is not true in full generality that $\mathrm{Aut}(G,H)$ must be finite if $H$ is a finite index subgroup of $G$. Indeed, let $H$ be a finitely generated group whose center contains infinitely many elements of order a fixed integer $p \geq 2$. (Many such groups exist as a consequence of \cite{Ould}. Explicit examples are Abels' matrix groups \cite{Abels}.) Notice that, if $\sigma : \mathbb{Z}/p \mathbb{Z} \to Z(H)$ is a morphism to the center of $H$, then 
$$\hat{\sigma} : (h,m) \mapsto (h \sigma(m),m)$$
defines an automorphism of $G:= H \oplus \mathbb{Z}/p \mathbb{Z}$ which belongs to $\mathrm{Aut}(G,H)$. As there exist infinitely many morphisms $\mathbb{Z}/p \mathbb{Z} \to Z(H)$ by construction, it follows that $\mathrm{Aut}(G,H)$ is infinite.
\end{remark}

\noindent
As shown by the next corollary, Lemma \ref{lem:AutAcylGen} applies to one-ended hyperbolic groups which are virtually surface groups. But we also expect other applications, for instance for virtually free groups.

\begin{cor}\label{cor:MCGaut}
Let $G$ be a one-ended hyperbolic group. If $G$ is virtually a surface group, then $\mathrm{Aut}(G)$ is acylindrically hyperbolic.
\end{cor}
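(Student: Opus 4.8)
The plan is to apply Lemma \ref{lem:AutAcylGen} with $\mathcal{H}$ taken to be the class of (closed orientable) surface groups of genus at least $2$, or more precisely the fundamental groups of closed hyperbolic surfaces. To invoke the lemma, I must verify its two hypotheses for this class. The first hypothesis, stability under passing to finite-index subgroups, holds because a finite-index subgroup of a closed surface group is again the fundamental group of a closed surface (of higher genus), by covering space theory and the classification of surfaces; and such a group is again a closed hyperbolic surface group since genus at least $2$ is preserved (the Euler characteristic only gets more negative). The main content is the second hypothesis: for a closed surface group $H$ of genus $\geq 2$, the automorphism group $\mathrm{Aut}(H)$ must act non-elementarily and acylindrically on a hyperbolic space, with at least one inner automorphism acting as a loxodromic isometry.

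For this second point I would exploit the classical identification of outer automorphisms of a surface group with the extended mapping class group of the surface. By the Dehn--Nielsen--Baer theorem, $\mathrm{Out}(\pi_1(S)) \cong \mathrm{MCG}^\pm(S)$, the extended mapping class group. It is a foundational result (due to Masur--Minsky, Bowditch, and made precise in the acylindrical setting by Bowditch) that the mapping class group of a non-exceptional surface acts acylindrically on its curve complex, which is a hyperbolic space, and that pseudo-Anosov mapping classes act as loxodromic isometries. Thus $\mathrm{Out}(H)$ acts acylindrically and non-elementarily on a hyperbolic space (the curve complex) with loxodromic elements. Since $\pi_1(S)$ has trivial center, $\mathrm{Inn}(H) \cong H$ sits inside $\mathrm{Aut}(H)$ with $\mathrm{Aut}(H)/\mathrm{Inn}(H) \cong \mathrm{Out}(H)$; I would pull back the action on the curve complex along the quotient map $\mathrm{Aut}(H) \to \mathrm{Out}(H)$. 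Under this pulled-back action the inner automorphisms act \emph{trivially} (they lie in the kernel), so an inner automorphism will \emph{not} be loxodromic.

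This last observation is the main obstacle, and it signals that the curve complex action alone is not quite the right space: Lemma \ref{lem:AutAcylGen} specifically demands that some \emph{inner} automorphism be loxodromic. The correct remedy is to replace the curve complex with a space on which $\mathrm{Aut}(H)$ itself — not merely its outer quotient — acts acylindrically with loxodromic inner automorphisms. I would instead appeal directly to the acylindrical hyperbolicity of $\mathrm{Aut}(H)$ for surface groups, which can be obtained from the structure of $\mathrm{MCG}^\pm(S,p)$, the mapping class group of the once-punctured surface: there is a Birman exact sequence $1 \to \pi_1(S) \to \mathrm{MCG}^\pm(S,p) \to \mathrm{MCG}^\pm(S) \to 1$ identifying $\mathrm{Aut}(\pi_1(S))$ (up to the center-quotient subtleties, which vanish here as the center is trivial) with an index-related overgroup tied to $\mathrm{MCG}^\pm(S,p)$. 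Since the once-punctured surface is itself non-exceptional, $\mathrm{MCG}^\pm(S,p)$ acts acylindrically on the curve complex of the punctured surface, and crucially the point-pushing subgroup $\pi_1(S) = \mathrm{Inn}(H)$ contains point-pushing maps along filling curves that are pseudo-Anosov, hence loxodromic on that curve complex. I would therefore take the hyperbolic space in the second hypothesis of Lemma \ref{lem:AutAcylGen} to be the curve complex of the once-punctured surface, on which $\mathrm{Aut}(H)$ acts acylindrically and non-elementarily with a loxodromic inner automorphism coming from a filling point-push. With both hypotheses of Lemma \ref{lem:AutAcylGen} verified, the lemma yields that $\mathrm{Aut}(G)$ is acylindrically hyperbolic or virtually cyclic; since $G$ one-ended and virtually a surface group forces $\mathrm{Aut}(G)$ to be non-(virtually cyclic), the conclusion follows.
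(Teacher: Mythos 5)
Your proposal is correct and follows essentially the same route as the paper: apply Lemma \ref{lem:AutAcylGen} to the class of closed surface groups of genus at least $2$, identify $\mathrm{Aut}(\pi_1(S,p))$ with $\mathrm{MCG}^\pm(S,p)$ via the Birman exact sequence, invoke Bowditch's acylindrical action on the curve complex of the punctured surface, and use Kra's theorem that a point-push along a filling curve is pseudo-Anosov to produce a loxodromic inner automorphism. Your preliminary observation that the curve complex of the \emph{closed} surface is the wrong space (since inner automorphisms act trivially there) is exactly the point the paper's choice of $\mathrm{MCG}^\pm(S,p)$ is designed to circumvent.
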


\begin{proof}
It is sufficient to show that Lemma \ref{lem:AutAcylGen} applies to 
$$\mathcal{H}= \{ \pi_1(S) \mid \text{$S$ closed surface of genus $\geq 2$}\}.$$ 
More precisely, given a surface $S$ of genus $g \geq 2$, we claim that $\mathrm{Aut}(\pi_1(S,p))$ acts non-elementarily and acylindrically on a hyperbolic space with at least one inner automorphism which is a loxodromic isometry. First of all, consider the following commutative diagram:
\[
\xymatrix{
1 \ar[r] & \mathrm{Inn}(\pi_1(S,p)) \ar[r] & \mathrm{Aut}(\pi_1(S,p)) \ar[r] & \mathrm{Out}(\pi_1(S,p)) \ar[r] & 1 \\
1  \ar[r] & \pi_1(S,p) \ar[u]^\simeq \ar[r] & \mathrm{Mod}^\pm(S,p) \ar[u]^\simeq \ar[r] & \mathrm{Mod}^\pm (S) \ar[u]^\simeq \ar[r] & 1
}
\]
\noindent
Here, $\mathrm{MCG}^\pm(S)$ (resp. $\mathrm{MCG}^\pm (S,p)$) refers to the \emph{extended mapping class group}, i.e., the group of diffeomorphisms of $S$ (resp. the group of diffeomorphisms of $S$ fixing $p$) up to isotopy (resp. up to isotopy fixing $p$). An orientation of $S$ is not assumed to be preserved. The second row is the so-called \emph{Birman exact sequence}. We refer to \cite[Page 235]{BookMCG} for more details. 

\medskip \noindent
According to \cite{BowditchAcylMCG}, the mapping class group $\mathrm{Mod}^\pm(S,p)$ admits a non-elementary and acylindrical action on a hyperbolic space, namely the curve graph of the surface, and the elements of $\mathrm{Mod}^\pm(S,p)$ inducing loxodromic isometries of this space coincide with its \emph{pseudo-Anosov} elements. (See \cite{MCGacyl} and references therein.) As a consequence of \cite[Theorem 2 page 252]{KraMCG}, if $\gamma \subset S$ is a filling closed curve passing through $p$, then the image of the corresponding element of $\pi_1(S,p)$ inside $\mathrm{Mod}^\pm (S,p)$ is pseudo-Anosov. This concludes the proof of our corollary.
\end{proof}

\noindent
Before turning to the proof of Theorem \ref{thm:mainstrong}, let us state a few results which will be needed. The first result we need is \cite[Theorem 8.7]{DGO}. 

\begin{thm}\label{thm:mainDGO}
Let $G$ be a group acting on a hyperbolic space $X$. If $g \in G$ is WPD, then there exists some $s \geq 1$ such that the normal closure $\llangle g^s \rrangle$ is a free subgroup all of whose non-trivial elements are loxodromic isometries of $X$. 
\end{thm}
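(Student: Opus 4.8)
The plan is to follow the small-cancellation machinery of Dahmani--Guirardel--Osin, whose central object is the \emph{elementary closure} of the loxodromic WPD element $g$ (loxodromicity being implicit, since otherwise the conclusion about loxodromic elements is vacuous). First I would let $E(g)$ denote the maximal virtually cyclic subgroup containing $g$; concretely, $E(g)$ is the set of elements $h \in G$ for which the Hausdorff distance between a quasi-axis $\gamma$ of $g$ and its translate $h\gamma$ is finite, equivalently the setwise stabiliser of the pair of endpoints $\{g^{+\infty}, g^{-\infty}\} \subset \partial X$. The WPD hypothesis is exactly what forces this subgroup to be small: the finiteness of the sets $\{h \mid d(x,hx) \le \epsilon, \ d(g^Nx, hg^Nx)\le \epsilon\}$ shows that the coarse stabiliser of $\gamma$ is virtually cyclic, so that $[E(g):\langle g\rangle]<\infty$. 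The key structural input, which I would import from the theory of hyperbolically embedded subgroups, is that a loxodromic WPD element yields a hyperbolically embedded virtually cyclic subgroup $E(g) \hookrightarrow_h G$.

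Second, I would choose $s$ large and invoke the small-cancellation theorem over hyperbolically embedded subgroups. Since $\langle g \rangle$ has finite index in $E(g)$ and $E(g) \hookrightarrow_h G$, the subgroups $\langle g^s\rangle \le E(g)$ satisfy an arbitrarily strong small-cancellation condition once $s$ is large, because the translation length of $g^s$ dominates the finitely many ``dangerous'' overlap lengths measured in the relative metric. The conclusion of that theorem is that the normal closure decomposes as a free product of conjugates,
$$\llangle g^s \rrangle = \mathop{\ast}_{t \in T} \; t\langle g^s \rangle t^{-1},$$
where $T$ is a transversal indexing the distinct conjugates (here one uses that $\langle g^s\rangle$ may be taken normal in $E(g)$ for suitable $s$). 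Each free factor is infinite cyclic, so this free product is a free group, giving the first assertion.

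Finally, for loxodromicity I would split an arbitrary non-trivial $w \in \llangle g^s\rrangle$ according to its normal form in the above free product. If $w$ is conjugate into a factor, it is a conjugate of a non-zero power of $g$, hence loxodromic because loxodromicity is a conjugacy invariant. The remaining case---$w$ a cyclically reduced word of syllable length at least two---is where I expect the main difficulty to lie: one must show that the periodic broken path obtained by concatenating the translated quasi-axis segments is itself a quasi-geodesic, so that $w$ translates along it. This is precisely the point at which the strong small-cancellation condition is indispensable; equivalently, one runs a ping-pong argument in $X \cup \partial X$, using the largeness of $s$ to guarantee that consecutive factor-axes overlap only boundedly, while the finiteness of near-stabilisers supplied by WPD is what keeps the ping-pong neighbourhoods disjoint. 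Granting this quasi-geodesicity, $w$ has positive asymptotic translation length and is therefore loxodromic, completing the argument.
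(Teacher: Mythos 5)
The paper offers no proof of this statement: it is imported verbatim as \cite[Theorem 8.7]{DGO} and used as a black box, so the ``paper's proof'' is precisely the Dahmani--Guirardel--Osin machinery you outline. Your sketch is a faithful reconstruction of that argument --- WPD forces $[E(g):\langle g\rangle]<\infty$, hence $E(g)$ is hyperbolically embedded, and small cancellation over hyperbolically embedded subgroups gives $\llangle g^s \rrangle = \ast_{t} \, t\langle g^s\rangle t^{-1}$ with non-peripheral elements loxodromic via broken quasi-geodesics --- and the two steps you leave as black boxes (hyperbolic embeddedness of $E(g)$ and the Greendlinger-type loxodromicity of elements not conjugate into a factor) are exactly the content of Theorems 6.8 and 5.3/7.19 of \cite{DGO}, so nothing essential is missing relative to what the paper itself assumes.
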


\noindent
Next, we will need some information on free splittings of finitely generated free groups. The following statement is essentially a consequence of Whitehead's work \cite{Whitehead}. We refer to \cite[Lemma 5.3]{CashenManning} for a proof. 

\begin{prop}\label{prop:FreeSplit}
Let $F$ be a finitely generated free group. There exists some $g \in F$ such that $F$ does not split freely relatively to $\langle g \rangle$.
\end{prop}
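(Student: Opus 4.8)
The plan is to prove that any finitely generated free group $F$ admits an element $g$ such that $F$ does not split freely relatively to $\langle g \rangle$, using Whitehead's algorithm and the notion of the Whitehead graph of a cyclic word. Recall that $F$ splits freely relatively to $\langle g \rangle$ precisely when $F$ decomposes as a free product $F = A \ast B$ with $\langle g \rangle$ conjugate into one of the factors; equivalently, $g$ can be represented (up to conjugacy) by a cyclically reduced word missing at least one generator, or more generally $g$ lies in a proper free factor. The strategy is to exhibit an explicit element whose cyclic word uses every generator and whose \emph{Whitehead graph} is connected without a cut vertex, since by Whitehead's lemma such an element cannot lie in any proper free factor of $F$.

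First I would fix a free basis $x_1, \dots, x_n$ of $F$ (the case $n = 1$ being trivial, where one simply takes $g = x_1$, and $F$ has no nontrivial free splitting). The key object is the \emph{Whitehead graph} $\mathrm{Wh}(w)$ of a cyclically reduced word $w$: its vertices are the $2n$ symbols $x_1^{\pm 1}, \dots, x_n^{\pm 1}$, and one places an edge between $a$ and $b^{-1}$ for each occurrence of the subword $ab$ read cyclically around $w$. The fundamental fact, due to Whitehead, is that if $w$ represents a conjugacy class lying in a proper free factor of $F$ then the Whitehead graph of some element in its $\mathrm{Aut}(F)$-orbit (indeed of $w$ itself if $w$ is of minimal length in its orbit) is either disconnected or has a cut vertex. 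Therefore it suffices to construct a single cyclically reduced $w$, of minimal length in its automorphism orbit, whose Whitehead graph is connected and has no cut vertex.

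The main step is the explicit construction. I would take the commutator-type word $w = x_1 x_2 \cdots x_n x_1^{-1} x_2^{-1} \cdots x_n^{-1}$, or a similar ``tightly interleaved'' word, and compute its Whitehead graph directly, verifying that every vertex $x_i^{\pm 1}$ is incident to edges linking it into a single two-connected component; the interleaving of positive and negative generators forces enough edges that no single vertex removal disconnects the graph. One then invokes the fact that $w$ is already minimal (or passes to a minimal-length representative in its orbit, whose Whitehead graph remains connected without cut vertex by Whitehead's peak-reduction lemma, since length reduction cannot create a cut vertex from this configuration), so that $w$ lies in no proper free factor, which is exactly the statement that $F$ does not split freely relatively to $\langle w \rangle$.

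The hard part will be the bookkeeping in the Whitehead graph computation and, more delicately, ensuring the chosen $w$ is of minimal length in its $\mathrm{Aut}(F)$-orbit so that Whitehead's criterion applies verbatim rather than after an unknown automorphism. Rather than verify minimality by hand, the cleaner route — and the one I would actually follow — is to cite the established result directly: this is precisely the content of \cite[Lemma 5.3]{CashenManning}, which packages exactly this consequence of \cite{Whitehead}. So in practice the proof reduces to quoting that lemma, and the conceptual content above (exhibiting a connected, cut-vertex-free Whitehead graph) is what justifies it. This is consistent with the excerpt, which already signals that the statement is ``essentially a consequence of Whitehead's work'' and refers to \cite[Lemma 5.3]{CashenManning} for the proof.
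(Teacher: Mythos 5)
Your proof is, in the end, the same as the paper's: the paper gives no argument beyond the remark that the statement is essentially due to Whitehead and the pointer to \cite[Lemma 5.3]{CashenManning}, which is exactly where you land, and your reduction of ``no free splitting relative to $\langle g\rangle$'' to ``$g$ lies in no proper free factor'' is correct. One caution about your illustrative construction, though: the word $w = x_1x_2\cdots x_nx_1^{-1}x_2^{-1}\cdots x_n^{-1}$ does \emph{not} have a connected Whitehead graph when $n\geq 3$ is odd --- its $2n$ edges split into two disjoint $n$-cycles (for $n=3$, the triangles on $\{x_1,x_2^{-1},x_3\}$ and $\{x_1^{-1},x_2,x_3^{-1}\}$) --- so the connected/no-cut-vertex certificate fails for that word, contrary to what you assert. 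If you want the argument self-contained, take instead $w = x_1^2x_2^2\cdots x_n^2$: its Whitehead graph is a single $2n$-cycle, hence connected with no cut vertex, and Whitehead's cut-vertex lemma (which in this direction requires no minimality hypothesis on $w$, so your worry about passing to a minimal representative is moot) shows $w$ lies in no proper free factor.
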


\noindent
We are finally ready to prove Theorem \ref{thm:mainstrong}.

\begin{proof}[Proof of Theorem \ref{thm:mainstrong}.]
Let $G$ be a one-ended hyperbolic. If $G$ is virtually a surface group, then Corollary \ref{cor:MCGaut} provides the desired conclusion. From now on, we suppose that $G$ is not virtually a surface group. Let $T$ denote its JSJ tree. 

\medskip \noindent
Suppose first that no element of $G$ defines a loxodromic isometry of $T$. Lemma \ref{lem:JSJnontrivial} implies that $\mathrm{Out}(G)$ must be finite. Consequently, $\mathrm{Aut}(G)$ contains the subgroup of inner automorphisms as a finite-index subgroup isomorphic to the quotient of $G$ by its center. Since the center of a non-elementary hyperbolic group must be finite, we conclude that $\mathrm{Aut}(G)$ must be a non-elementary hyperbolic group as well. In particular, it has to be acylindrically hyperbolic.

\medskip \noindent
From now on, suppose that $G$ contains elements which are loxodromic isometries of $T$. As a consequence of Proposition \ref{prop:WPDinG}, there exists some $g \in G$ which is WPD with respect to the action $G \curvearrowright T$, and, as a consequence of Theorem \ref{thm:mainDGO}, up to replacing $g$ with one of its powers we may suppose that $\llangle g \rrangle$ is a free subgroup intersecting trivially vertex- and edge-stabilisers of $T$. Fix a finite generating set $S$ of $G$, and consider the subgroup
$$H = \left\langle xgx^{-1}, \ x \in S \cup \{1\} \right\rangle \leq \llangle g \rrangle.$$
According to Proposition \ref{prop:FreeSplit}, there exists some $h \in H$ such that $H$ does not split freely relatively to $\langle h \rangle$. 

\medskip \noindent
Fix a splitting of $G$ over a virtually cyclic subgroup, and let $T'$ denote the associated Bass-Serre tree. 

\begin{claim}
The subgroup $H$ does not fix a point of $T'$.
\end{claim}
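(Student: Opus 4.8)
The plan is to show that the finitely generated free subgroup $H$ cannot be elliptic in the splitting with Bass-Serre tree $T'$. The key structural input is that $H$ was built precisely so that it intersects every vertex-stabiliser of the JSJ tree $T$ trivially (since $\llangle g \rrangle$ does so), and this should force $H$ to be non-elliptic in any virtually-cyclic splitting. First I would exploit the universality of the JSJ decomposition asserted in Theorem~\ref{thm:JSJ}: any virtually cyclic subgroup on which $G$ splits can be conjugated into an edge-group or a type-1 vertex-group of the JSJ decomposition. Thus the edge-group $E'$ of the splitting giving $T'$ is conjugate into such a piece, and this connects the abstract splitting $T'$ back to the canonical tree $T$ where we have concrete control over $H$.

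The main step is an argument by contradiction: suppose $H$ fixes a vertex $v \in T'$, so $H \leq \mathrm{stab}_{T'}(v) =: V$. The strategy is to leverage that $H$ is free and non-elementary (it contains the non-commuting conjugates $g$ and $sgs^{-1}$ for appropriate $s \in S$, since $g$ is loxodromic in $T$ and these have distinct axes), hence $V$ must itself be non-elementary. I would then use the JSJ machinery to locate $V$ relative to $T$: since $T$ is the \emph{canonical} JSJ tree and encodes all virtually-cyclic splittings, the vertex-stabiliser $V$ of the auxiliary splitting must interact with the JSJ structure in a controlled way. In particular, one expects that a vertex-group of an arbitrary virtually-cyclic splitting either is elliptic in $T$ (hence conjugate into a JSJ vertex-group) or inherits a nontrivial induced splitting from $T$. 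If $V$ is elliptic in $T$, then $H \leq V$ would be conjugate into a JSJ vertex-stabiliser, directly contradicting the defining property that $H \cap \mathrm{stab}_T(\text{vertex}) = \{1\}$ together with the fact that $H$ is infinite.

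The delicate case, and what I expect to be the main obstacle, is when $V$ is \emph{not} elliptic in $T$: then $V$ acts nontrivially on (a minimal subtree of) $T$ with virtually-cyclic edge-stabilisers, inducing a nontrivial virtually-cyclic splitting of $V$. Here I would invoke the fact that $H \leq V$ intersects all $T$-edge-stabilisers trivially, so the action of $H$ on $T$ is free with trivial edge-stabilisers, i.e.\ $H$ acts freely on the subtree it preserves. A free action of a free group on a tree with trivial edge-stabilisers gives a free-product (Grushko-type) decomposition of $H$ compatible with $\langle h \rangle$. The goal is to massage this into a free splitting of $H$ relative to $\langle h \rangle$: since $h \in H$ fixes a single vertex of $T'$ (being inside the elliptic subgroup $V$), one arranges that $\langle h \rangle$ is carried into a single factor, producing a nontrivial free splitting of $H$ relative to $\langle h \rangle$. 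This contradicts the choice of $h$ via Proposition~\ref{prop:FreeSplit}. The technical heart is ensuring the induced decomposition of $H$ is genuinely a \emph{free} splitting (trivial edge-groups) and genuinely \emph{relative} to $\langle h \rangle$; this is where the triviality of edge-stabilisers of $T$ on $H$ and the ellipticity of $h$ in $T'$ must be combined carefully.

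Throughout, I would keep in mind the alternative shorter route: one can instead argue that if $H$ were elliptic in $T'$, then combining the $T'$-splitting with the JSJ tree $T$ (both with virtually-cyclic edges) and restricting the JSJ action to $V$ yields a splitting of $G$ relative to $H$ whose edge-groups are virtually cyclic, allowing a refinement argument that pushes the splitting down to $H$ itself as a free splitting relative to $\langle h \rangle$. Either way the contradiction is against Proposition~\ref{prop:FreeSplit}, so the claim follows and $H$ cannot fix a point of $T'$.
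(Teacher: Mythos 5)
Your first case ($V=\mathrm{stab}_{T'}(v)$ elliptic in the JSJ tree $T$) is fine: $H$ meets every vertex-stabiliser of $T$ trivially and is non-trivial, so it cannot lie in a $T$-elliptic subgroup. The genuine gap is in your main case, where $V$ is not elliptic in $T$. There you restrict $G \curvearrowright T$ to $H$ and correctly note that $H$ acts with trivial edge- and vertex-stabilisers; but the free decomposition of $H$ this produces has \emph{no} non-trivial elliptic elements at all, since every non-trivial element of $H \leq \llangle g \rrangle$ --- in particular $h$ itself --- is loxodromic in $T$ (this is exactly what Theorem \ref{thm:mainDGO} and the choice of $g$ guarantee). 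So that decomposition is never a free splitting of $H$ \emph{relative to} $\langle h \rangle$, and Proposition \ref{prop:FreeSplit} yields no contradiction. The ellipticity of $h$ in $T'$ cannot be imported into a decomposition coming from $T$, and you give no mechanism for doing so; your ``alternative shorter route'' has the same defect, since under the contradiction hypothesis the splitting of $H$ induced by $T'$ is trivial. In short, the case you yourself identify as the technical heart does not close.

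The paper's proof is much more elementary and does not use the Whitehead property of $h$ at all; that property is only invoked \emph{after} the Claim, to upgrade ``$H$ is not elliptic in $T'$ and has trivial edge-stabilisers there'' to ``$h$ is loxodromic in $T'$.'' For the Claim itself one observes that $h$ cannot stabilise an edge of $T'$ (edge-groups of $T'$ are elliptic in $T$ by Theorem \ref{thm:JSJ}, while $H$ meets $T$-elliptic subgroups trivially), so its fixed-point set in $T'$ is at most one vertex; then a short case analysis on the generating set $S$ --- using that $G=\langle S \rangle$ fixes no point of $T'$ together with Serre's lemma that a product of two elliptic isometries with disjoint fixed-point sets is loxodromic --- exhibits an explicit loxodromic element of $H$ in $T'$ as a product of conjugates by elements of $S$. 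If you want to salvage your argument, you should replace the appeal to relative free splittings in your second case by such a direct construction of a loxodromic element of $H$ in $T'$.
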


\noindent
If $h$ is a loxodromic isometry of $T'$, then the conclusion is clear. So suppose that $h$ is elliptic. Notice that $h$ cannot stabilise an edge since it follows from Theorem \ref{thm:JSJ} that $H$ intersects trivially any virtually cyclic subgroup of $G$ over which $G$ splits. So the fixed-point set of $h$ has to be reduced to a single vertex, say $p \in T'$. Now, we distinguish two cases.

\medskip \noindent
Suppose first that there exists $x \in S$ such that $x$ is a loxodromic isometry of $T'$. Then $h$ and $xhx^{-1}$ are two elliptic isometries whose fixed-point sets are disjoint, so that it follows from \cite[Proposition I.6.26]{MR1954121} that $h \cdot xhx^{-1}$ is a loxodromic element of $T'$. Because this element belongs to $H$, we conclude that $H$ does not fix a point of $T'$.

\medskip \noindent
Now, suppose that any element of $S$ is elliptic in $T'$. Since $G$ does not fix a point of $T'$ and that $S$ generates $G$, we know that there exists $x \in S$ which does not fix the unique vertex $p$ fixed by $h$. Let $q \in T'$ denote a vertex fixed by $x$. Notice that, because $h$ does not stabilise an edge, the vertex $p$ separates $q$ and $h \cdot q$. As a consequence, if the fixed-point sets of $x$ and $hx^{-1}h^{-1}$ intersect, then $p$ must be fixed by either $x$ or $hx^{-1}h^{-1}$. We already know that $x$ does not fix $p$. But, in the latter case, we deduce that
$$x^{-1} \cdot p = h^{-1} \cdot hx^{-1}h^{-1} \cdot h \cdot p= h^{-1} \cdot h^{-1}x^{-1}h^{-1} \cdot p = h^{-1} \cdot p = p,$$
which is impossible. Therefore, the fixed-point sets of $x$ and $hx^{-1}h^{-1}$ must be disjoint. It follows from \cite[Proposition I.6.26]{MR1954121} that $x \cdot hx^{-1}h^{-1}$ is a loxodromic element of $T'$. Because this element belongs to $H$, we conclude that $H$ does not fix a point of $T'$.

\medskip \noindent
This concludes the proof of our claim, i.e., $H$ acts fixed-point freely on $T'$.

\medskip \noindent
Now, notice that $H$ has trivial edge-stabilisers in $T'$. Indeed, according to Theorem \ref{thm:JSJ}, any edge-stabiliser of $T'$ must fix a point in $T$, so that the conclusion follows from the fact that $H$ intersects trivially vertex- and edge-stabilisers of $T$. The conclusion is that $H$ acts on $T'$ fixed-point freely and with trivial edge-stabilisers. By definition of the element $h \in H$, we deduce that $h$ is a loxodromic isometry of $T'$.

\medskip \noindent
Thus, we have proved that $h$ cannot be elliptic in the Bass-Serre tree associated to a splitting of $G$ over a virtually cylic subgroup. The same conclusion holds for any non-trivial power of $h$. Therefore, for any $s \geq 1$, $G$ does not split relatively to $\langle h^s \rangle$ over a virtually cyclic subgroup. Notice that, as a consequence of Proposition \ref{prop:WPDinG}, $h$ is WPD with respect to $G \curvearrowright T$, so that Proposition \ref{prop:ifnotWPD} applies. We deduce from Proposition \ref{prop:RelativeSplitting} that the inner automorphism $\iota(h)$ is WPD with respect to the action $\mathrm{Aut}(G) \curvearrowright T$.

\medskip \noindent
Therefore, $\mathrm{Aut}(G)$ is either acylindrically hyperbolic or virtually cyclic. But $\mathrm{Inn}(G) \simeq G/Z(G)$ is not virtually cyclic since $G$ is one-ended, so $\mathrm{Aut}(G)$ cannot be virtually cyclic. We conclude that $\mathrm{Aut}(G)$ is acylindrically hyperbolic as desired.
\end{proof}

\section{Applications to extensions of hyperbolic groups}\label{section:extension}

\noindent
This section is dedicated to Theorem \ref{thm:extension}, namely:

\begin{thm}\label{thm:ExtensionGen}
Let $G$ be a group whose center is finite and whose automorphism group is acylindrically hyperbolic. Fix a group $H$ and a morphism $\varphi : H \to \mathrm{Aut}(H)$. The semidirect product $G \rtimes_\varphi H$ is acylindrically hyperbolic if and only if $$K:= \mathrm{ker} \left( H \overset{\varphi}{\to} \mathrm{Aut}(H) \to \mathrm{Out}(G) \right)$$ is a finite subgroup of $H$.
\end{thm}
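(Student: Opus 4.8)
The plan is to analyse the semidirect product $\Gamma := G \rtimes_\varphi H$ through the homomorphism $\Phi : \Gamma \to \mathrm{Aut}(G)$ defined by $\Phi(g,h) = \iota(g) \circ \varphi(h)$, where $\iota : G \to \mathrm{Inn}(G)$ is the inner automorphism map. A direct computation (using the identity $\iota(\alpha(x)) = \alpha \circ \iota(x) \circ \alpha^{-1}$ for $\alpha \in \mathrm{Aut}(G)$) shows that $\Phi$ is a morphism and, more precisely, that $\Phi(g,h)$ is exactly the automorphism of $G$ obtained by conjugating the normal subgroup $G \trianglelefteq \Gamma$ by $(g,h)$. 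Consequently $\ker \Phi = C_\Gamma(G)$, the centraliser of $G$ in $\Gamma$, while the image of $\Phi$ is the subgroup $N := \mathrm{Inn}(G) \cdot \varphi(H) \leq \mathrm{Aut}(G)$. The first step is to relate $C_\Gamma(G)$ to $K$: since $k \in K$ means precisely $\varphi(k) \in \mathrm{Inn}(G)$, the projection $(g,h) \mapsto h$ restricts to a surjection $C_\Gamma(G) \to K$ whose kernel is $Z(G) \times \{1\}$, yielding a short exact sequence $1 \to Z(G) \to C_\Gamma(G) \to K \to 1$. As $Z(G)$ is finite, $C_\Gamma(G)$ is finite if and only if $K$ is finite, so the theorem reduces to the equivalence: $\Gamma$ is acylindrically hyperbolic if and only if $C_\Gamma(G)$ is finite.

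For the ``if'' direction, suppose $K$, hence $C_\Gamma(G) = \ker \Phi$, is finite. I would pull back the non-elementary acylindrical action of $\mathrm{Aut}(G)$ on a hyperbolic space $X$ (which exists by hypothesis) along $\Phi$, letting $\Gamma$ act via $\Gamma \xrightarrow{\Phi} \mathrm{Aut}(G) \curvearrowright X$. Because $\ker \Phi$ is finite, this action stays acylindrical (the relevant almost-stabilisers only grow by the factor $|\ker \Phi|$). It then remains to verify non-elementarity, i.e. that $N = \Phi(\Gamma)$ contains two independent loxodromic isometries. This holds because $N$ contains $\mathrm{Inn}(G)$, which is an infinite normal subgroup of $\mathrm{Aut}(G)$ (infinite since $Z(G)$ is finite and $G$ is infinite, as $\mathrm{Aut}(G)$ is acylindrically hyperbolic hence infinite), and infinite normal subgroups of groups acting non-elementarily and acylindrically act non-elementarily themselves. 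Thus $\Gamma$ admits a non-elementary acylindrical action on a hyperbolic space, so it is acylindrically hyperbolic.

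For the ``only if'' direction I argue by contraposition: assuming $K$, hence $C_\Gamma(G)$, is infinite, I show $\Gamma$ is not acylindrically hyperbolic. Suppose otherwise, so that $\Gamma$ acts non-elementarily and acylindrically on a hyperbolic space $X$. Since $G \trianglelefteq \Gamma$ is infinite, it acts non-elementarily on $X$, and in particular contains a loxodromic element $g_0$; for an acylindrical action such an element is WPD, so its centraliser $C_\Gamma(g_0)$ is virtually cyclic. As $g_0 \in G$ we have $C_\Gamma(G) \subseteq C_\Gamma(g_0)$, whence $C_\Gamma(G)$ is an infinite subgroup of a virtually cyclic group and is therefore commensurable with $\langle g_0 \rangle \subseteq C_\Gamma(g_0)$; in particular $\langle g_0 \rangle \cap C_\Gamma(G)$ is infinite. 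But this intersection lies in $G \cap C_\Gamma(G) = Z(G)$, which is finite — a contradiction. Hence $C_\Gamma(G)$, and with it $K$, is finite.

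The crux of the argument is the identification $\ker \Phi = C_\Gamma(G)$ and the exact sequence relating it to $K$ and $Z(G)$; once these are established, both implications run through the standard structural theory of acylindrically hyperbolic groups. The two points requiring care are the non-elementarity of the pulled-back action in the ``if'' direction and the virtual cyclicity of $C_\Gamma(g_0)$ in the ``only if'' direction, both of which rely on general facts (infinite normal subgroups remain non-elementary, and loxodromic elements of acylindrical actions have virtually cyclic centralisers) rather than on anything specific to $G$ — which is also why the statement holds under the sole hypotheses that $Z(G)$ is finite and $\mathrm{Aut}(G)$ is acylindrically hyperbolic.
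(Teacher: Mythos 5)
Your proof is correct and follows essentially the same route as the paper's: the forward direction rests on the virtual cyclicity of centralisers of loxodromic elements of acylindrical actions applied to an element of the infinite normal subgroup $G$, and the backward direction transfers the acylindrical action of $\mathrm{Aut}(G)$ to $G \rtimes_\varphi H$ through the finite-kernel map onto $\mathrm{Inn}(G) \cdot \varphi(H)$, using the fact that infinite normal subgroups act non-elementarily. The only difference is packaging: you merge the paper's two successive finite-kernel reductions (by $K$ and then by $Z(G)$) into the single homomorphism $\Phi$ with kernel $C_\Gamma(G)$, and your elements of $C_\Gamma(G)$ are exactly the elements $k\alpha(k)^{-1}$ that the paper constructs by hand.
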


\begin{proof}
Assume first that $G \rtimes_\varphi H$ is acylindrically hyperbolic. Because $G$ is an infinite normal subgroup, it follows from \cite[Lemma 7.2]{OsinAcyl} that it must contain a generalised loxodromic element, say $g \in G$. (Recall from \cite{OsinAcyl} that, given a group $\Gamma$, $\gamma \in \Gamma$ is a \emph{generalised loxodromic element} if $\Gamma$ admits an acylindrical action on a hyperbolic space for which $\gamma$ is loxodromic.) As a consequence, the centraliser $C(g)$ of $g$ in $G \rtimes_\varphi H$ must contain $\langle g \rangle$ as a finite-index subgroup \cite[Corollary 6.6]{DGO}. By definition of $K$, there exists a map $\alpha : K \to G$ such that, for every $k \in K$, the automorphism $\varphi(k)$ coincides with the inner automorphism $\iota(\alpha(k))$. Notice that $k \alpha(k)^{-1}$ belongs to $C(g)$ for every $k \in K$. Indeed, 
$$k \alpha(k)^{-1} \cdot g \cdot \alpha(k) k^{-1} = k \cdot \alpha(k)^{-1}g\alpha(k) \cdot k^{-1}= \iota( \alpha(k)) \left(\alpha(k)^{-1}g\alpha(k) \right) = g.$$ 
Moreover, if $h,k \in K$ are such that $h \alpha(h)^{-1} \langle g \rangle = k \alpha(k)^{-1} \langle g \rangle$, then we have
$$k^{-1}h \in \alpha(k)^{-1} \langle g \rangle \alpha(h) \subset G,$$
hence $h=k$ since $K \cap G \subset H \cap G = \{1\}$. Consequently, because $\langle g \rangle$ has finite index in $C(g)$, it follows that $K$ must be finite.

\medskip \noindent
Conversely, suppose that $K$ is finite. We want to prove that $G \rtimes_\varphi H$ is acylindrically hyperbolic. First of all, notice that we have a short exact sequence
$$1 \to K \to G \rtimes_\varphi H \to G \rtimes_{\bar{\varphi}} (H/K) \to 1$$
where $\bar{\varphi} : H/K \to \mathrm{Aut}(G)$ is induced by $\varphi : H \to \mathrm{Aut}(G)$. Because being acylindrically hyperbolic is stable under quotients with finite kernels and extensions with finite kernels (according to \cite[Lemma 1]{ErratumMO}), it follows that $G \rtimes_\varphi H$ is acylindrically hyperbolic if and only if so is $G \rtimes_{\bar{\varphi}} (H/K)$. 

\medskip \noindent
In other words, we have reduced the problem to the case where $H$ is a subgroup $\mathrm{Aut}(G)$ intersecting trivially $\mathrm{Inn}(G)$. In the sequel, we assume that we are in this situation in order to simplify the notation. 

\medskip \noindent
Notice that $\langle \mathrm{Inn}(G), H \rangle = \mathrm{Inn}(G) \rtimes H$ since $\mathrm{Inn}(G)$ is a normal subgroup and $H \cap \mathrm{Inn}(G)= \{1\}$. On the other hand, because the center $Z(G)$ of $G$ is a characteristic normal subgroup, we have the short exact sequence
$$1 \to Z(G) \to G \rtimes H \to \left( G/Z(G) \right) \rtimes H \to 1.$$
By noticing that $\varphi \cdot \iota(g) \cdot \varphi^{-1}= \iota(\varphi(g))$ for every $g \in G$, we deduce that the canonical isomorphism 
$$\left\{ \begin{array}{ccc} G/Z(G) & \to & \mathrm{Inn}(G) \\ g & \mapsto & \iota(g) \end{array} \right.$$ 
induces an isomorphism $\left( G/Z(G) \right) \rtimes H \to \mathrm{Inn}(G) \rtimes H$. Because being acylindrically hyperbolic is stable under quotients with finite kernels and extensions with finite kernels (according to \cite[Lemma 1]{ErratumMO}), it follows that $G \rtimes H$ is acylindrically hyperbolic if and only if so is $\langle \mathrm{Inn}(G),H \rangle$. 

\medskip \noindent
So it remains to show that $\langle \mathrm{Inn}(G),H \rangle$ is acylindrically hyperbolic. Notice that, as a consequence of \cite[Lemma 7.2]{OsinAcyl}, any infinite normal subgroup of an acylindrically hyperbolic group must contain at least one of the generalised loxodromic elements of the group. Therefore, $\mathrm{Aut}(G)$ must contain a generalised loxodromic element which is an inner automorphism. It follows that $\langle \mathrm{Inn}(G),H \rangle$ contains a generalised loxodromic element in its own right, i.e., $\langle \mathrm{Inn}(G),H \rangle$ is acylindrically hyperbolic or virtually cyclic. But $\mathrm{Inn}(G)$ cannot be virtually cyclic because it is a normal subgroup of an acylindrically hyperbolic group, again according to \cite[Lemma 7.2]{OsinAcyl}, so the desired conclusion holds. 
\end{proof}

\begin{remark}
It is worth noticing that in the first part of the proof nothing is assumed about the center of $G$ nor the acylindrical hyperbolicity of its automorphism group. Actually, the argument shows more generally the following statement: If $1 \to N \to G \to Q \to 1$ is a short exact sequence with $N$ infinite and $G$ acylindrically hyperbolic, then $\mathrm{ker}(Q \to \mathrm{Out}(N))$ has to be finite. However, due to the absence of a natural morphism $Q \to \mathrm{Aut}(N)$, it is not clear how to embed $G$ into $\mathrm{Aut}(N)$ in order to deduce the acylindrical hyperbolicity of $G$ from the acylindrical hyperbolicity of $\mathrm{Aut}(N)$, as in the second part of the proof above. In other words, we do not know the answer to the following question: Given a short exact sequence $1 \to N \to G \to Q \to 1$, if $N$ has a finite center and if its automorphism group is acylindrically hyperbolic, is it true that $G$ is acylindrically hyperbolic if and only if $\mathrm{ker}(Q \to \mathrm{Out}(N))$ is finite? Theorem \ref{thm:ExtensionGen} provides a positive answer when the short sequence splits.
\end{remark}

\noindent
Because non-elementary hyperbolic groups have finite centers, our next statement follows immediately from Theorems \ref{thm:extension} and \ref{thm:main}:

\begin{cor}
Let $G$ be a one-ended hyperbolic group. Fix a group $H$ and a morphism $\varphi : H \to \mathrm{Aut}(G)$. The semidirect product $G \rtimes_\varphi H$ is acylindrically hyperbolic if and only~if $$K:= \mathrm{ker} \left( H \overset{\varphi}{\to} \mathrm{Aut}(H) \to \mathrm{Out}(H) \right)$$ is a finite subgroup of $H$.
\end{cor}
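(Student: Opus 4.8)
The plan is to derive this corollary directly from Theorem~\ref{thm:extension} together with Theorem~\ref{thm:main}, so that the only work is to check that a one-ended hyperbolic group $G$ satisfies the two standing hypotheses of Theorem~\ref{thm:extension}: that its center is finite, and that its automorphism group is acylindrically hyperbolic.

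First I would verify that $Z(G)$ is finite. Since $G$ is one-ended it is infinite and cannot be virtually cyclic (a virtually cyclic group is finite or two-ended), so $G$ is a \emph{non-elementary} hyperbolic group. It is classical that the center of a non-elementary hyperbolic group is finite: an infinite-order central element would have centraliser equal to all of $G$, forcing $G$ to be virtually cyclic. Alternatively, as recalled in Section~\ref{section:JSJ}, $Z(G)$ lies in the kernel of the action $G \curvearrowright \partial G$, which coincides with the unique maximal finite normal subgroup of $G$. Either way, $Z(G)$ is finite.

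Next, Theorem~\ref{thm:main} asserts precisely that $\mathrm{Aut}(G)$ is acylindrically hyperbolic for any one-ended hyperbolic group $G$, so the second hypothesis of Theorem~\ref{thm:extension} holds as well. With both hypotheses in hand, I would simply invoke Theorem~\ref{thm:extension} for this $G$, which yields that $G \rtimes_\varphi H$ is acylindrically hyperbolic if and only if $K = \mathrm{ker}\left( H \overset{\varphi}{\to} \mathrm{Aut}(G) \to \mathrm{Out}(G) \right)$ is finite, exactly the claimed equivalence.

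I do not expect any genuine obstacle: the entire content is packaged into the two cited theorems, and the only verification required — finiteness of the center — is routine for non-elementary hyperbolic groups. The proof is therefore essentially a matter of assembling the hypotheses and citing the two main results.
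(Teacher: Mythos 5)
Your proposal is correct and coincides with the paper's own derivation: the corollary is stated there as an immediate consequence of Theorems \ref{thm:main} and \ref{thm:extension}, with the only verification being that a one-ended (hence non-elementary) hyperbolic group has finite center. Your reading of the kernel as $\mathrm{ker}\left( H \overset{\varphi}{\to} \mathrm{Aut}(G) \to \mathrm{Out}(G) \right)$ is the intended one (the statement's $\mathrm{Aut}(H)$, $\mathrm{Out}(H)$ is a typo).
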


\noindent
In the specific case of surface groups, one gets:

\begin{cor}\label{cor:MCG}
Let $S$ be a closed orientable surface of genus $\geq 2$. Fix a point $p \in S$, a group $H$ and morphism $\varphi : H \to \mathrm{MCG}^{\pm}(S, p)$. The semidirect product $\pi_1(S,p) \rtimes_\varphi H$ is acylindrically hyperbolic if and only if $H \overset{\varphi}{\to} \mathrm{MCG}^{\pm}(S,p) \to \mathrm{MCG}^{\pm}(S)$ has finite image.
\end{cor}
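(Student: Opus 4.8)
The plan is to treat this as the special case $G = \pi_1(S,p)$ of Theorem~\ref{thm:ExtensionGen}, transporting its hypotheses and conclusion through the Dehn--Nielsen--Baer isomorphism together with the Birman exact sequence. Since $S$ is closed orientable of genus at least $2$, the group $\pi_1(S,p)$ is a one-ended hyperbolic group with trivial center, and Corollary~\ref{cor:MCGaut} guarantees that $\mathrm{Aut}(\pi_1(S,p))$ is acylindrically hyperbolic; thus all the standing assumptions of Theorem~\ref{thm:ExtensionGen} are satisfied for $G = \pi_1(S,p)$.

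First I would reproduce the commutative diagram already invoked in the proof of Corollary~\ref{cor:MCGaut}, whose top row is the short exact sequence $1 \to \mathrm{Inn}(\pi_1(S,p)) \to \mathrm{Aut}(\pi_1(S,p)) \to \mathrm{Out}(\pi_1(S,p)) \to 1$, whose bottom row is the Birman exact sequence $1 \to \pi_1(S,p) \to \mathrm{MCG}^\pm(S,p) \to \mathrm{MCG}^\pm(S) \to 1$, and whose three vertical arrows are isomorphisms. The crucial structural input is that the natural map $\psi : \mathrm{MCG}^\pm(S,p) \to \mathrm{Aut}(\pi_1(S,p))$ is an isomorphism carrying the point-pushing subgroup onto $\mathrm{Inn}(\pi_1(S,p))$, and that it intertwines the forgetful morphism $f : \mathrm{MCG}^\pm(S,p) \to \mathrm{MCG}^\pm(S)$ with the canonical projection $\mathrm{Aut}(\pi_1(S,p)) \to \mathrm{Out}(\pi_1(S,p))$.

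With these identifications, the group $\pi_1(S,p) \rtimes_\varphi H$ of the statement is by convention the product $\pi_1(S,p) \rtimes_{\psi \circ \varphi} H$ attached to the morphism $\psi \circ \varphi : H \to \mathrm{Aut}(\pi_1(S,p))$. I would then apply Theorem~\ref{thm:ExtensionGen} to this morphism: the product is acylindrically hyperbolic precisely when the subgroup $K = \ker\!\left( H \xrightarrow{\psi \circ \varphi} \mathrm{Aut}(\pi_1(S,p)) \to \mathrm{Out}(\pi_1(S,p)) \right)$ is finite. Since $\psi$ intertwines $f$ with the projection to $\mathrm{Out}(\pi_1(S,p))$, this subgroup $K$ is exactly $\ker\!\left( H \xrightarrow{\varphi} \mathrm{MCG}^\pm(S,p) \xrightarrow{f} \mathrm{MCG}^\pm(S) \right)$, so the acylindrical hyperbolicity of $\pi_1(S,p) \rtimes_\varphi H$ is controlled exactly by the finiteness of the kernel of the composite $H \to \mathrm{MCG}^\pm(S)$, which is the assertion to be established.

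The only substantive point beyond quoting Theorem~\ref{thm:ExtensionGen} is the compatibility of the two exact sequences, namely the verification that $\psi$ maps the Birman sequence isomorphically onto the $\mathrm{Inn}$--$\mathrm{Aut}$--$\mathrm{Out}$ sequence. This is classical: pushing the basepoint around a loop representing $c \in \pi_1(S,p)$ induces the inner automorphism $\iota(c)$, which identifies the point-pushing subgroup with $\mathrm{Inn}(\pi_1(S,p))$, while Dehn--Nielsen--Baer identifies $\mathrm{MCG}^\pm(S)$ with $\mathrm{Out}(\pi_1(S))$; the naturality of these identifications then yields the intertwining with $f$. I expect this diagram check, rather than any negative-curvature input, to be the main thing to pin down, all of the hyperbolic geometry having already been absorbed into Corollary~\ref{cor:MCGaut} and Theorem~\ref{thm:ExtensionGen}.
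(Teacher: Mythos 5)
Your proposal is correct and takes essentially the same approach as the paper: the paper's own proof is precisely an application of Theorem~\ref{thm:ExtensionGen} to $G=\pi_1(S,p)$ combined with the commutative diagram matching the Birman exact sequence $1 \to \pi_1(S,p) \to \mathrm{MCG}^\pm(S,p) \to \mathrm{MCG}^\pm(S) \to 1$ with the sequence $1 \to \mathrm{Inn}(\pi_1(S,p)) \to \mathrm{Aut}(\pi_1(S,p)) \to \mathrm{Out}(\pi_1(S,p)) \to 1$ via $\psi$, which is exactly the diagram check you carry out. One remark: the statement's phrase ``has finite image'' is evidently a typo for ``has finite kernel'' (compare the version of this corollary in the introduction), and your argument correctly establishes the kernel formulation, which is what Theorem~\ref{thm:ExtensionGen} actually yields.
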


\noindent
In this statement, $\mathrm{MCG}^\pm(S)$ (resp. $\mathrm{MCG}^\pm (S,p)$) refers to the \emph{extended mapping class group}, i.e., the group of diffeomorphisms of $S$ (resp. the group of diffeomorphisms of $S$ fixing $p$) up to isotopy (resp. up to isotopy fixing $p$). An orientation of $S$ is not assumed to be preserved. There clearly exists a morphism $\psi : \mathrm{MCG}^\pm(S,p) \to \mathrm{Aut}(\pi_1(S,p))$, and we denote by abuse of notation $G\rtimes_\varphi H$ the semidirect product $G\rtimes_{\psi \circ \varphi} H$. Finally, the morphism $f : \mathrm{MCG}^\pm(S,p) \to \mathrm{MCG}^\pm (S)$ is the natural morphism which just ``forgets'' the point $p$.

\begin{proof}[Proof of Corollary \ref{cor:MCG}.]
The corollary is a direct consequence of Theorem \ref{thm:ExtensionGen} applied to $\pi_1(S,p)$ combined with the following commutative diagram:
\[
\xymatrix{
1 \ar[r] & \mathrm{Inn}(\pi_1(S,p)) \ar[r] & \mathrm{Aut}(\pi_1(S,p)) \ar[r] & \mathrm{Out}(\pi_1(S,p)) \ar[r] & 1 \\
1  \ar[r] & \pi_1(S,p) \ar[u]^\simeq \ar[r] & \mathrm{Mod}^\pm(S,p) \ar[u]^\simeq_\psi \ar[r]^f & \mathrm{Mod}^\pm (S) \ar[u]^\simeq \ar[r] & 1
}
\]
\noindent
We refer to \cite[Page 235]{BookMCG} for more details.
\end{proof}

\addcontentsline{toc}{section}{References}

\bibliographystyle{alpha}
{\footnotesize\bibliography{AutHypAcyl}}

\Address

\end{document}